\newtheorem{theorem}{Theorem}[section]
\newtheorem{lemma}[theorem]{Lemma}
\newtheorem{claim}[theorem]{Claim}
\newtheorem{proposition}[theorem]{Proposition}
\newtheorem{corollary}[theorem]{Corollary}
\theoremstyle{definition}
\theoremstyle{definition}\newtheorem{example}[theorem]{Example}
\theoremstyle{definition}\newtheorem{remark}[theorem]{Remark}
\newcommand{\rank}{\operatorname{rank}}
\newcommand{\sgn}{\operatorname{sgn}}
\tikzstyle{vertex}=[circle, draw, fill=black, inner sep=0pt, minimum size=4pt]
\tikzstyle{edge}=[line width=1.5pt]
\title{Rigid graphs in cylindrical normed spaces}
\author{Sean Dewar\thanks{School of Mathematics, University of Bristol, Bristol, UK. E-mail: \texttt{sean.dewar@bristol.ac.at}} \and Derek Kitson\thanks{Department of Mathematics and Computer Studies, Mary Immaculate College, Thurles,
Ireland. E-mail: \texttt{derek.kitson@mic.ul.ie}}}
\date{May 15, 2023}
\begin{document}

\maketitle

\begin{abstract}
   We characterise rigid graphs for cylindrical normed spaces $Z=X\oplus_\infty \mathbb{R}$ where $X$ is a finite dimensional real normed linear space and $Z$ is endowed with the product norm.
   In particular, we obtain purely combinatorial characterisations of minimal rigidity for a large class of 3-dimensional cylindrical normed spaces;
   for example, when $X$ is an $\ell_p$-plane with $p\in (1,\infty)$.  
   We combine these results with recent work of Cros et al.~to characterise rigid graphs in the $4$-dimensional cylindrical space $(\mathbb{R}^2\oplus_1\mathbb{R})\oplus_\infty\mathbb{R}$.
    These are among the first combinatorial characterisations of rigid graphs in normed spaces of dimension greater than 2.
   Examples of rigid graphs are presented and algorithmic aspects are discussed. 
\end{abstract}

\section{Introduction}
A simple undirected graph $G=(V,E)$ is {\em flexible} in a real normed linear space $Z$ if given any placement $p=(p_v)_{v\in V}\in Z^V$ of the vertices in $Z$ there exists a non-trivial continuous motion of the {\em joints} $p_v$ ($v\in V$) which preserves the lengths of each of the {\em bars} $p_v-p_w$ ($vw\in E$). 
A graph which is not flexible is said to be {\em rigid} in $Z$.
If $\dim Z=1$ then it is well known that the rigid graphs are precisely the connected graphs. If $\dim Z=2$ then there is a dichotomy; either $Z$ is isometrically isomorphic to the Euclidean plane, in which case the rigid graphs have been characterised by Pollaczek-Geiringer \cite{pollaczek27}, or $Z$ is not isometrically isomorphic to the Euclidean plane, in which case the rigid graphs are characterised in \cite{dew2}. 
There are currently no known combinatorial characterisations for rigid graphs in Euclidean spaces of dimension $3$ or higher.

The rigidity of graphs in non-Euclidean normed spaces of dimension $d\geq 3$ has previously been investigated in the settings of $\ell_p$-spaces \cite{DewKitNix,kit-pow}, polyhedral spaces \cite{polyhedra}, matrix spaces \cite{matrixnorm}, and for a class of mixed norms \cite{CruKasKitSch}. In each of these settings, necessary combinatorial conditions for rigidity have been derived but complete characterisations have remained elusive.  In this article we present complete combinatorial characterisations of rigid graphs for classes of 3-dimensional normed spaces, namely the cylindrical spaces $Z=X\oplus_\infty \mathbb{R}$ where $X$ is a ``generic" normed plane and $Z$ is endowed with the product norm $\|(x,y)\|_Z=\max\{\|x\|_X,|y|\}$. 
The class of generic normed planes is defined in Section \ref{Sec:Rigidity} and includes all $\ell_p$-planes where $p\in(1,\infty)$.  Cylindrical normed spaces are a special case of the more general product normed spaces  considered in \cite{matrixnorm}. In the specific case of the cylindrical normed space $\mathbb{R}^2\oplus_\infty \mathbb{R}$, our results settle  Conjecture 59 from \cite{matrixnorm} in the affirmative. We highlight also the recent work of Cros et al.~(\cite{CAPR}) on a related notion of rigidity motivated by formation control. Although not explicitly stated, their work indicates a characterisation of rigidity for the conical space $\mathbb{R}^2\oplus_1 \mathbb{R}$ which we present in Section \ref{s:conical}. 

In Section \ref{Sec:Frameworks}, we introduce bar-joint frameworks $(G,p)$ in the setting of a cylindrical normed space $Z=X\oplus_\infty \mathbb{R}$. In preparation for later sections, we establish equivalent conditions for a framework to be well-positioned in $Z$ and we describe the two {\em monochrome subframeworks} of $(G,p)$ which are induced by the cylindrical norm on $Z$.
We also prove a key lemma which establishes the existence of special placements for graphs in cylindrical normed spaces in which a given spanning forest is realised as one of the two induced monochrome subframeworks.
The main results are contained in Section \ref{Sec:Rigidity}. Firstly, in Theorem \ref{graphthm} we characterise minimal rigidity for cylindrical normed spaces of dimension at least 3 in terms of a packing consisting of two spanning subgraphs which are, respectively, minimally rigid in $X$ and a tree. In the special case where $X$ is a generic normed plane we obtain complete combinatorial characterisations of minimal rigidity (Theorem \ref{mainthm}).
In Section \ref{S:Examples}, we draw on surface triangulations to construct examples of rigid graphs and we identify connectivity criteria which guarantee rigidity in cylindrical normed spaces.
Finally, in Section \ref{S:Related}, we present several related results and highlight some open problems. In particular, we establish an equivalence between rigidity in the cylindrical space $\mathbb{R}^2\oplus_\infty \mathbb{R}$ and rigidity in the dual space $\mathbb{R}^2\oplus_1 \mathbb{R}$, and we characterise rigid graphs for the 4-dimensional cylindrical space $(\mathbb{R}^2\oplus_1\mathbb{R})\oplus_\infty\mathbb{R}$.

\section{Frameworks in cylindrical normed spaces}
\label{Sec:Frameworks}
Throughout, $X$ denotes a finite dimensional real normed linear space with norm $\|\cdot\|_X$ and $\mathbb{R}^d$ denotes $d$-dimensional Euclidean space with the standard $\ell_2$ norm $\|\cdot\|_2$ (or $|\cdot|$ if $d=1$). For $p\in[1,\infty]$ and $d\geq 2$, we denote by $\ell_p^d$ the $d$-dimensional $\ell_p$-space (note that $\ell_2^d=\mathbb{R}^d$).
All graphs are finite, simple and undirected. Given a graph $G$ with a subgraph $H$, we denote by $G-H$ the subgraph of $G$ formed by removing every edge of $H$ from $G$.
 A \emph{(bar-joint) framework} in  $X$ is a pair  $(G,p)$  consisting of a graph $G=(V,E)$ and a vector $p\in X^V$, $p=(p_v)_{v\in V}$,  with the property that $p_v\not= p_w$ for all edges $vw\in E$.
 The vector $p$ is referred to as a \emph{placement} of $G$ in $X$.
A \emph{subframework} of $(G,p)$ is a  framework $(H,p_H)$ where $H=(V(H),E(H))$ is a subgraph of $G$ and $p_H=(p_v)_{v\in V(H)}\in X^{V(H)}$ is the induced placement of $H$ in $X$.

\subsection{Cylindrical normed spaces}
A normed linear space is said to be {\em cylindrical} if it is isometrically isomorphic to a direct sum $Z =X\oplus_\infty \mathbb{R}$ where $X$ is a normed linear space and $Z$ is endowed with the product norm $\|(x,y)\|_Z := \max\{ \|x\|_X,|y|\}$. Note that the closed unit ball of $Z$ is the Cartesian product $B_X\times [-1,1]$ where $B_X$ denotes the closed unit ball of $X$.
We denote by $\pi_X$ and $\pi_{\mathbb{R}}$ the natural projections,
\begin{equation}\label{eq:natproj}
    \pi_X :Z \rightarrow X, ~ (x,y) \mapsto x, \qquad \pi_{\mathbb{R}} :Z \rightarrow \mathbb{R}, ~ (x,y) \mapsto y.
\end{equation}
Let $K$ denote the double cone $K=\{(x,y)\in Z:\|x\|_X \leq |y|\}$. The interior, boundary and complement of $K$ are denoted respectively by $K^\circ$, $\partial K$ and $K^c$. 

\begin{example}
\label{ex:matrixnorms}
Setting $X=\ell_q^d$, with $q\in[1,\infty)$,  we obtain the cylindrical normed space $\ell_{q,\infty}^{d+1}:=\ell_q^d\oplus_\infty \mathbb{R}$ with norm,
\[\|(x_1,\ldots,x_d,x_{d+1})\|_{q,\infty} = \max\left\{\left(\sum_{i=1}^d |x_i|^q\right)^{\frac{1}{q}},\,|x_{d+1}|\right\}.\]
See Figure \ref{fig:unitball} (left) for an illustration of the unit ball and double cone in $\ell_{2,\infty}^3 = \mathbb{R}^2 \oplus_\infty \mathbb{R}$.

\end{example}

\begin{figure}[ht]
\begin{minipage}{0.4\linewidth}
    \centering
    \includegraphics[width=10cm, height=4.2cm]{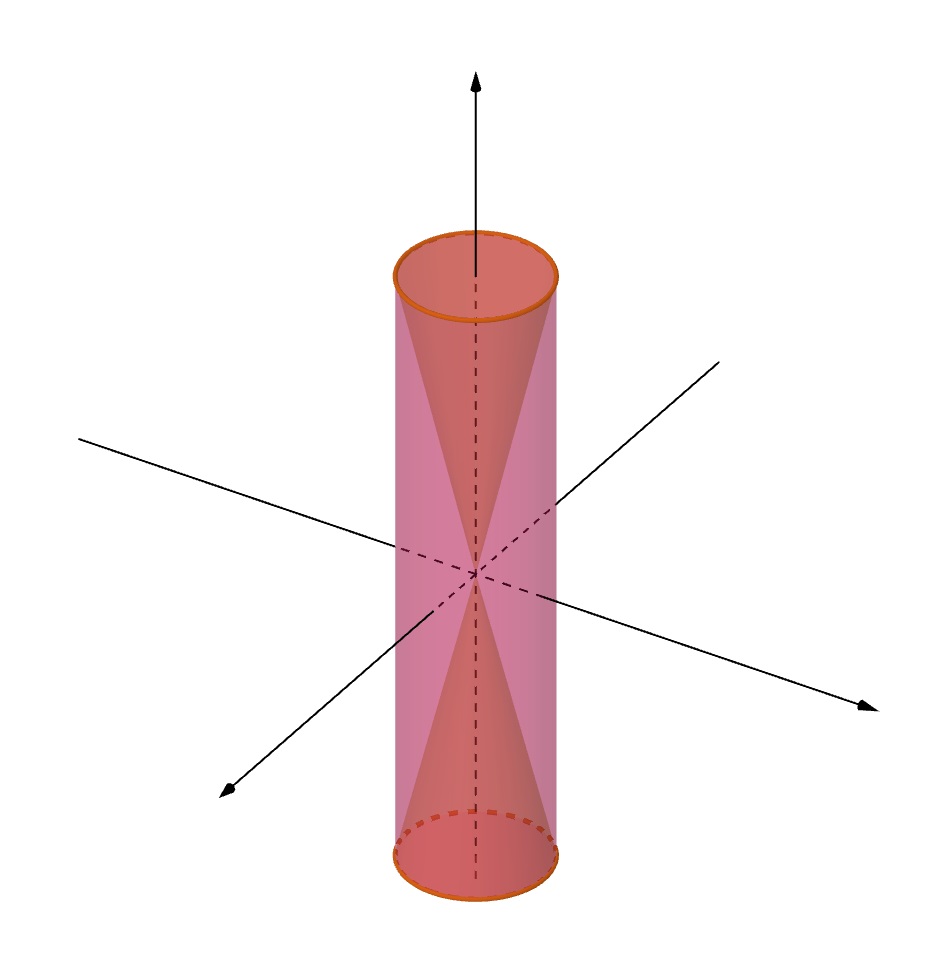}
    \end{minipage}
    \begin{minipage}{0.45\linewidth}
    \centering
     \includegraphics[width=10cm, height=3.8cm,trim={7cm 0 4cm 3.8cm},clip]{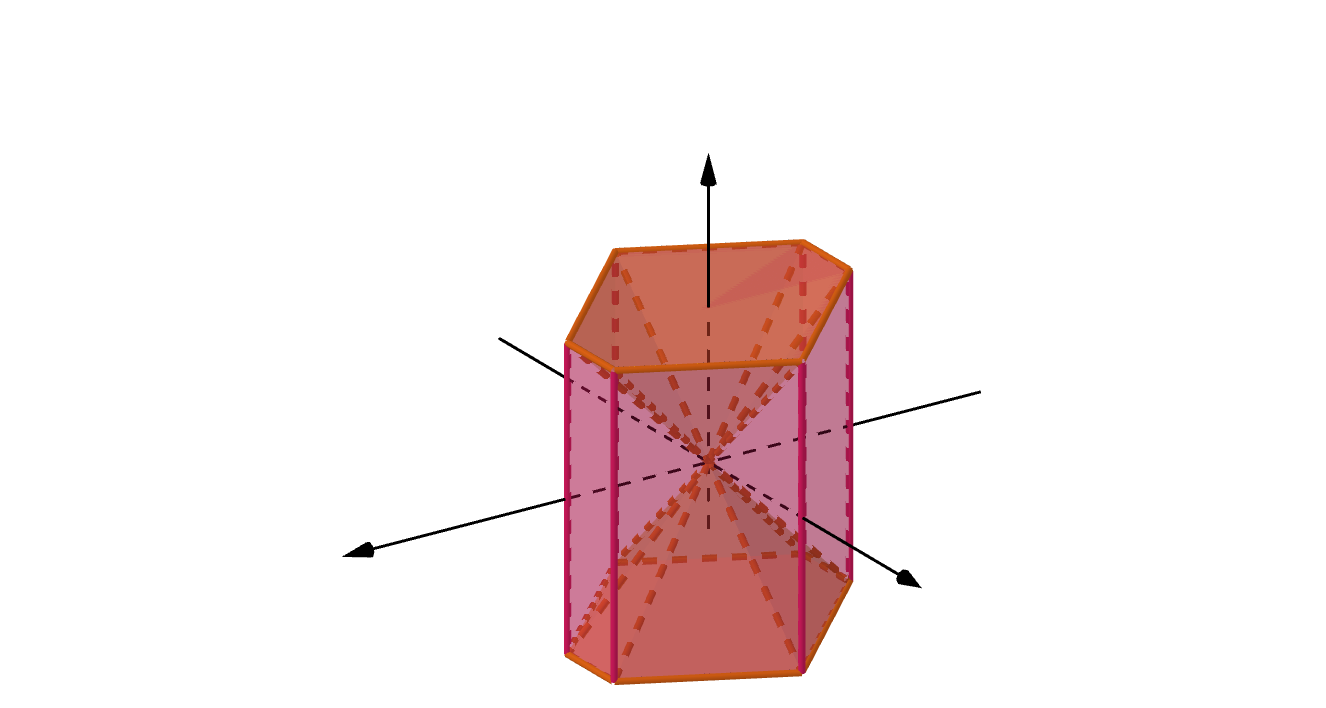}    
    \end{minipage}
    \caption{Left: An illustration of the unit ball for the cylindrical normed space $\ell_{2,\infty}^3 = \mathbb{R}^2 \oplus_\infty \mathbb{R}$. 
    Right: An illustration of the unit ball for a cylindrical normed space $X \oplus_\infty \mathbb{R}$ where $X$ has a polygonal unit ball. 
    In both pictures, the darker regions indicate the intersection of the unit ball with the double cone $K$.}
    \label{fig:unitball}
\end{figure}

\begin{example}
Let $X$ be a linear space endowed with a polyhedral norm (i.e.~the unit ball $B_X$ is a convex polyhedron). The unit ball for the cylindrical normed space $Z=X \oplus_\infty \mathbb{R}$ is the prism $B_Z=B_X\times [-1,1]$ and so  $\|\cdot\|_Z$ is a polyhedral norm on $Z$.
For example, setting $X=\ell_\infty^d$  we obtain the cylindrical normed space $\ell_{\infty,\infty}^{d+1}:=\ell_\infty^d\oplus_\infty \mathbb{R}=\ell_\infty^{d+1}$.
See Figure \ref{fig:unitball} (right)  for an illustration of the unit ball in a $3$-dimensional cylindrical normed space $X \oplus_\infty \mathbb{R}$ where $X$ is a $2$-dimensional space with an irregular hexagonal unit ball.

\end{example}

\begin{example}
Let $\mathcal{H}_1(2,\mathbb{R})$ denote the real linear space of $2\times 2$ real symmetric matrices endowed with the trace norm and let $\mathcal{H}_1(2,\mathbb{C})$ denote the real linear space of $2\times 2$ complex hermitian  matrices also endowed with the trace norm.
As noted in \cite{matrixnorm}, $\mathcal{H}_1(2,\mathbb{R})$ is isometrically isomorphic to  $\ell_{2,\infty}^3 = \mathbb{R}^2 \oplus_\infty \mathbb{R}$ and $\mathcal{H}_1(2,\mathbb{C})$ is isometrically isomorphic to  $\ell_{2,\infty}^4 = \mathbb{R}^3 \oplus_\infty \mathbb{R}$.
Hence $\mathcal{H}_1(2,\mathbb{R})$ and $\mathcal{H}_1(2,\mathbb{C})$ are examples of cylindrical normed spaces.
\end{example}

\subsection{Key Lemma}
\label{Sec:Key}
Let $x\in X$ and let $r>0$.
In the following, we let $S_r[x]=\{y\in X:\|y-x\|_X=r\}$ denote the sphere in $X$ of radius $r$ centred at  $x$. We denote by $B_r[x]=\{y\in X:\|y-x\|_X\leq r\}$ the closed ball in $X$ of radius $r$ centred at $x$.

\begin{lemma}\label{mainlem}
    Let $X$ be a normed space with dimension at least $2$ and let $G=(V,E)$ be a graph with a spanning forest $T$.
    Then there exists a  framework $(G,p)$ in the cylindrical normed space $Z=X \oplus_\infty \mathbb{R}$ where:
    \begin{enumerate}[(i)]
        \item $p_v-p_w\in K^\circ$ for every edge $vw \in T$, and
        \item $p_v-p_w\in K^c$ for every edge $vw \in E \setminus T$.
    \end{enumerate}
\end{lemma}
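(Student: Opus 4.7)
Writing each $p_v=(q_v,t_v)\in X\oplus_\infty\mathbb{R}$, conditions~(i) and~(ii) translate into $\|q_v-q_w\|_X<|t_v-t_w|$ for $vw\in T$ and $\|q_v-q_w\|_X>|t_v-t_w|$ for $vw\in E\setminus T$. The plan is to construct $q$ and $t$ separately, using the bipartiteness of $T$ to pick $t$ and the hypothesis $\dim X\ge 2$ to pick $q$. Before starting, I reduce to the case in which $G$ is connected and $T$ is a spanning tree: if $G$ is disconnected, I apply the construction component by component and then translate the resulting partial placements far apart along a fixed direction in $X$; this is harmless because (i) and (ii) only constrain pairs of vertices joined by an edge of $G$.

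For the main construction, I exploit that the spanning tree $T$ is bipartite. Let $\chi\colon V\to\{0,1\}$ be a proper $2$-colouring of $T$ and set $t_v:=M\chi(v)$ for some parameter $M>0$ to be fixed later. Then $|t_v-t_w|=M$ on every tree edge, while on every non-tree edge $|t_v-t_w|$ equals either $0$ or $M$ according as $\chi(v)=\chi(w)$ or not. Since $\dim X\ge 2$, I pick linearly independent vectors $e_1,e_2\in X$ and take $q_v:=x_v e_1+h\chi(v)e_2$, where $h>0$ is small and the scalars $x_v\in\mathbb{R}$ remain to be chosen. Applying the triangle inequality (in both directions) in $X$ then reduces conditions~(i) and~(ii) to a system of constraints on the $x_v$'s: on a tree edge $|x_v-x_w|$ should be at most $(M-h\|e_2\|_X)/\|e_1\|_X$; on a same-colour non-tree edge $x_v$ and $x_w$ should merely be distinct; on a different-colour non-tree edge $|x_v-x_w|$ should exceed $(M+h\|e_2\|_X)/\|e_1\|_X$.

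The hard part is to produce such a one-dimensional labelling $x\colon V\to\mathbb{R}$ when $G$ is not too sparse, since the ``close'' constraints for tree edges and the ``far'' constraints for different-colour non-tree edges couple through shared vertices. I plan to do this by a BFS-style labelling of $V$ rooted at an arbitrary vertex of $T$, making crucial use of the fact that any different-colour non-tree edge joins two vertices whose tree distance is odd and at least $3$: the tree path between them is long enough for $x$ to accumulate past the ``far'' threshold while each individual tree edge keeps consecutive $x$-values below the ``close'' threshold. Once the labelling is in hand, taking $h$ sufficiently small compared to $M$ (and scaling $M$ large enough to dominate the resulting tree-edge contributions from the $e_2$-component) converts the approximate bounds above into the strict inequalities demanded by~(i) and~(ii), yielding the desired framework.
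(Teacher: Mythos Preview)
Your reduction to a one-dimensional labelling $x\colon V\to\mathbb{R}$ does not work in general. The difficulty is not merely that the tree path between a different-colour non-tree pair has length at least~$3$; it is that when the tree branches, the increments along different tree paths out of a common vertex cannot all ``accumulate in the same direction'' on the real line.

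Concretely, take the tripod tree $T$ on seven vertices $v_0,a_1,a_2,b_1,b_2,c_1,c_2$ with edges $v_0a_1,a_1a_2,v_0b_1,b_1b_2,v_0c_1,c_1c_2$, and let $G$ be $T$ together with the six non-tree edges $a_2b_1,a_2c_1,b_2a_1,b_2c_1,c_2a_1,c_2b_1$ (all different-colour, all at tree distance~$3$). Writing $A=(M-h\|e_2\|_X)/\|e_1\|_X$ and $B=(M+h\|e_2\|_X)/\|e_1\|_X$, your constraints force $x_{a_1},x_{b_1},x_{c_1}\in(x_{v_0}-A,x_{v_0}+A)$. Order these three values and let $x_{b_1}$ (say) be the middle one. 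Then $x_{b_2}$ must lie within $A$ of $x_{b_1}$ yet more than $B$ from both $x_{a_1}$ and $x_{c_1}$; a short case check shows this forces $x_{c_1}-x_{a_1}>2B$, contradicting $x_{c_1}-x_{a_1}<2A<2B$. So the system is infeasible no matter how small $h>0$ is.

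The underlying issue is that a two-level height function $t_v=M\chi(v)$ makes the required $|t_v-t_w|$ identical ($=M$) on tree edges and on different-colour non-tree edges, so the entire separation must come from the $X$-placement; your essentially one-dimensional choice $q_v=x_ve_1+h\chi(v)e_2$ is then too rigid to handle branching. The paper avoids this by assigning a \emph{different} height to each BFS level of $T$ (an alternating partial sum of rapidly decreasing radii) and by using a genuinely two-dimensional placement in $X$, putting the children of each vertex on a small sphere around it. Both ingredients --- many height levels and the sphere placement --- are what make the branching case go through.
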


\begin{proof}
    First suppose $T$ is a tree.
    Choose a root $v_0$ of the tree $T$ and define $d_T$ to be the metric on the set $V$ where $d_T(v,w)$ is the distance between vertices $v$ and $w$ in $T$.
    Given $n = \max_{v \in V} d_T(v,v_0)$,
    define the sets $V_0,\ldots,V_n$ where $V_i := \{ v \in V : d_T(v,v_0) = i\}$.
    We will now construct the following: (i) a set of distinct points $\{q_v :v \in V \}$ in $X$, and (ii) strictly decreasing positive scalars $s_0>s_1>\cdots>s_n$.
    First,
    set $q_{v_0}=0$ and $s_0 = 1$.
    Next,
    for each $v \in V_1$ choose a point $q_v$ on the unit sphere of $X$ so that no two vertices are mapped to the same point.
    Now choose $s_1$ such that $0< s_1 < \frac{1}{2}\min \left\{ \|q_v - q_w\|_X : v, w \in V_0\cup V_1, ~ v\neq w\right\}$. Note in particular that $s_1<\frac{1}{2}s_0$.
    
    We will now define the scalars $s_0,\ldots,s_n$ and the set of distinct points $\{q_v :v \in V \}$ with the following inductive algorithm.
    \begin{enumerate}[1)]
        \item First assume that for some $2\leq k\leq n$,
    we have chosen $s_0,\ldots,s_{k-1}$ and $q_v$ for all $v \in \bigcup_{i=0}^{k-1}V_i$.
        \item For each $u \in V_k$,
        there exists a unique path $(u,u_1,u_2)$ in $T$ with $u_1 \in V_{k-1}$ and $u_2 \in V_{k-2}$.
        \item Choose a set $\{q_u : u \in V_k\}$ of distinct points so that $q_u \in S_{s_{k-1}} [q_{u_1}] \setminus B_{s_{k-2}} [q_{u_2}]$ for each $u \in V_k$.
        \item Pick $0< s_k < \frac{1}{2}s_{k-1}$ sufficiently small so that any two spheres $S_{s_i}[q_v]$ and $S_{s_j}[q_w]$, where $v\in V_i$, $w \in V_j$ and $i,j\in\{0,\ldots, k\}$, intersect if and only if $v$ and $w$ are adjacent in $T$. 
        Specifically, choose $s_k$ so that $s_k < \frac{1}{2} \min \left\{ \|q_v - q_w\|_X : v, w \in V_k, ~ v\neq w\right\}$ and $s_k < \|q_u-q_v\|_X -s_i$ for all $u \in V_k$ and $v \in V_i \setminus \{u_1\}$ for $i < k$.
    \end{enumerate}
    Note that the set of distinct points $\{q_v :v \in V \}$ and the scalars $s_0,\ldots,s_{n}$ will have the following property:
    for any vertices $v ,w\in V$ with $v \in V_i$ and $w \in V_j$ and $i \leq j$,
    we have $\|q_v - q_w\|_X = s_i$ if $v$ and $w$ are adjacent in $T$,
    and $\|q_v - q_w\|_X > s_i$ otherwise.
    
    
    Now choose some small $\epsilon >0$ so that the following holds.
    The strictly decreasing sequence $r_0 > \ldots > r_n >0$ with $r_i = s_i + \epsilon/2^i$ has the property that for any vertices $v \in V_i$ and $w \in V_j$ with $j \geq i$,
    we have $\|q_v - q_w\|_X < r_i$ if $v$ and $w$ are adjacent in $T$,
    and $\|q_v - q_w\|_X > r_i$ otherwise.
    We note that we will also have that $r_i < \frac{1}{2}r_{i-1}$ for every $i \in \{1,\ldots,n\}$.
    With this,
    define for each $1 \leq k \leq n$ the value $h_k := \sum_{i=0}^{k-1}(-1)^i r_i$.
    We now define $p$ to be the placement in $X \oplus_\infty \mathbb{R}$ where $p_{v_0}=0$ and for each vertex $v \in V_k$ we have $p_v = ( q_v , h_k)$.
    
    \begin{claim}
        Let $0 \leq i \leq j \leq n$.
        Then for any vertices $v ,w\in V$ with $v \in V_i$ and $w \in V_j$,
        $\|q_v - q_w\|_X < |h_i - h_j|$ if $v$ and $w$ are adjacent in $T$ and $\|q_v - q_w\|_X > |h_i - h_j|$ otherwise.
    \end{claim}
    
    \begin{proof}
        If $i=j$ then $\|q_v - q_w\|_X > |h_i - h_i| =0$.
        If $j = i+1$ then $|h_i - h_j| = r_i$.
        By our choice of $r_i$,
        we have $\|q_v - q_w\|_X < r_i$ if $v$ and $w$ are adjacent in $T$,
        and $\|q_v - q_w\|_X > r_i$ otherwise.
        Finally,
        suppose $j > i+ 1$.
        As $r_k < r_{k-1}/2$ for each $k = 1,\ldots,n$,
        we have $|h_i - h_j| = \sum_{k=i}^{j-1} (-1)^{k-i} r_k < r_i$.
        Since $v$ and $w$ cannot be adjacent in $T$ then by our choice of $r_i$ we have $\|q_v - q_w\|_X > r_i > |h_i - h_j|$.
    \end{proof}
    
    It now follows that $p_v-p_w\in K^\circ$ for every edge $vw \in T$ and
         $p_v-p_w\in K^c$ for every edge $vw \in E \setminus T$. 
    
    Now suppose $T$ is a forest with connected components $T_1,\ldots,T_m$.
    Apply the same process to each subgraph $G_i$ induced on the vertex set $V_i$ of $T_i$ to obtain a placement $p^i$ in $X \oplus_\infty \mathbb{R}$.
    Set 
    \begin{align*}
        R = \max \left\{ \left|\pi_{\mathbb{R}} (p^i_v -p^j_w) \right| : v \in V_i, w \in V_j  \right\}, \qquad S = \max \left\{ \left\|\pi_{X} (p^i_v-p^j_w) \right\|_X : v \in V_i, w \in V_j  \right\}
    \end{align*}
    and choose any vector $x \in X$ with $\|x\|_X > R+S$.
    We now set $p$ to be the placement of $G$ in $X \oplus_\infty \mathbb{R}$ with $p_v = p^i_v + (ix,0)$ for each $v \in V_i$.
    We note that for $i \neq j$ and vertices $v \in V_i$ and $w \in V_j$,
    we will have 
    \begin{align*}
        \left\| \pi_X(p_v-p_w)\right\|_X \geq \left| i-j \right| \|x\|_X - \left\|\pi_X(p^i_v-p^j_w)\right\|_X  > R \geq \left| \pi_{\mathbb{R}}(p_v-p_w)\right|. 
    \end{align*}
    Thus, $p_v-p_w\in K^\circ$ for every edge $vw \in T$ and
         $p_v-p_w\in K^c$ for every edge $vw \in E \setminus T$.
\end{proof}

\begin{remark}
The proof of Lemma \ref{mainlem} requires that $\dim X\geq 2$. If $\dim X=1$ then  the cylindrical space $X\oplus_\infty \mathbb{R}$ is isometrically isomorphic to the $\ell_\infty$-plane. In this setting a result somewhat analogous to Lemma \ref{mainlem} was obtained in \cite[Theorem 4.3]{clikit} using different methods: if a graph $G$ can be expressed as an edge disjoint union of two spanning trees $T_1$ and $T_2$ then there exists a framework $(G,p)$ in the $\ell_\infty$-plane such that $p_v-p_w\in K^\circ$ for all $vw\in T_1$ and $p_v-p_w\in K^c$ for all $vw\in T_2$.
The proof of \cite[Theorem 4.3]{clikit} uses a multigraph inductive construction with accompanying geometric arguments to construct the placement 
$p$. This approach does not appear to adapt easily to higher dimensional settings.     
\end{remark}

\subsection{Monochrome subframeworks}
Let $(G,p)$ be a framework in a cylindrical normed space $X\oplus_\infty \mathbb{R}$ with the property that $p_v-p_w\notin \partial K$ for each edge $vw\in E$. 
Define an edge-labelling $\kappa_p: E\to
\{\mbox{blue},\,\mbox{green}\}$ where for each edge $vw\in E$, $$\kappa_p(vw) = \left\{\begin{array}{ll}
\mbox{blue} & \mbox{ if }p_v-p_w\in K^c\\
\mbox{green} & \mbox{ if }p_v-p_w\in K^\circ.
\end{array}\right.$$
Let $G_{X,p}$ denote the subgraph of $G=(V,E)$ consisting of the vertex set $V$ and all blue edges. Similarly, let $G_{\mathbb{R},p}$ denote the subgraph of $G$ consisting of the vertex set $V$ and all green edges.
When the context is unambiguous,
we denote these graphs by $G_X$ and $G_\mathbb{R}$ respectively.
The subframeworks $(G_X,p)$ and $(G_{\mathbb{R}},p)$ are referred to as the  {\em monochrome subframeworks} of $(G,p)$.

Define $(G_X,p_X)$ to be the framework in the normed space $X$ with graph $G_X$ and placement $p_X := ( \pi_X(p_v))_{v \in V}$. 
Similarly, define $(G_\mathbb{R}, p_\mathbb{R})$ to be the framework in $\mathbb{R}$ with graph $G_{\mathbb{R}}$ and placement $p_\mathbb{R} := ( \pi_\mathbb{R}(p_v))_{v \in V}$.
We refer to $(G_X,p_X)$ and $(G_\mathbb{R}, p_\mathbb{R})$ as the {\em projected monochrome subframeworks} of $(G,p)$.

\begin{example}
\label{ex:mono}
Consider the framework $(K_3,p)$ in $\mathbb{R}^2\oplus_\infty \mathbb{R}$,
where $K_3$ is the complete graph with vertex set $V=\{v_1,v_2,v_3\}$ and $p$ is the placement 
$p_{v_1} = (1,-1, \frac{1}{2})$, $p_{v_2} = (1,1,1)$ and $p_{v_3}=(\frac{3}{2},-1,\frac{3}{2})$.
Note that
\begin{gather*}
    p_{v_1}-p_{v_2}=\left(0,-2,-\frac{1}{2} \right)\in K^c, \quad p_{v_2}-p_{v_3}=\left(-\frac{1}{2},2,-\frac{1}{2} \right)\in K^c, \\
 p_{v_1}-p_{v_3}=\left(-\frac{1}{2},0,-1 \right)\in K^\circ.
\end{gather*}
With respect to the induced edge-labelling $\kappa_p$, the edges $v_1v_2$ and $v_2v_3$ are coloured blue and the remaining edge $v_1v_3$ is coloured green. See Figure \ref{fig:monochrome} for an illustration of this framework together with the induced edge-labelling and projected monochrome subframeworks.
\end{example}

\begin{figure}[ht]
    \centering
    \includegraphics[width=14cm, height=6cm]{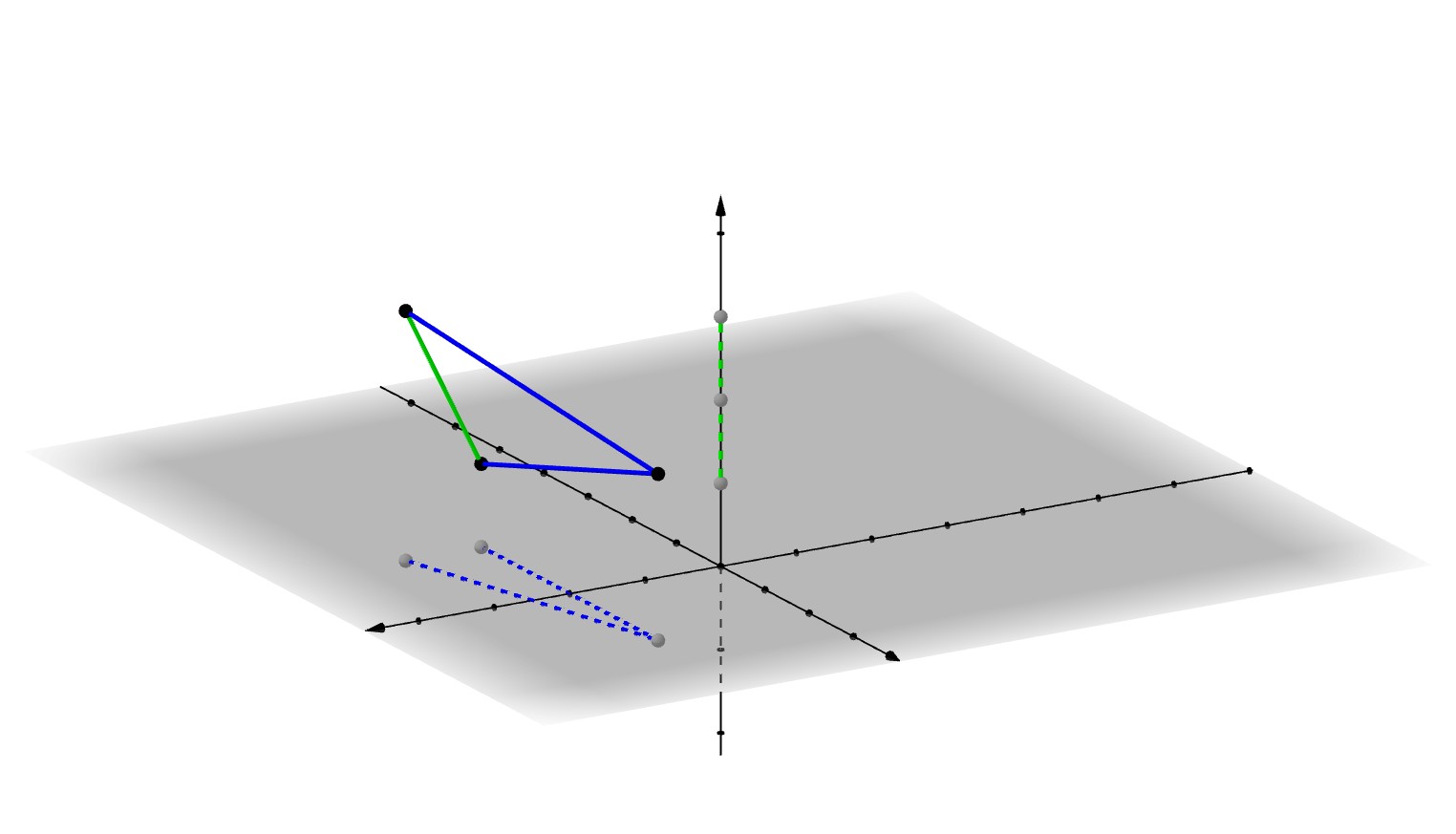}
    \caption{An illustration of the framework $(K_3,p)$ in $\mathbb{R}^2\oplus_\infty\mathbb{R}$ described in Example \ref{ex:mono} together with the induced edge-labelling. The projected monochrome subframeworks are illustrated with dashed lines.  }
    \label{fig:monochrome}
\end{figure}

\subsection{Smooth points and support functionals}
Denote by $X^*$ the dual space of the normed space $X$ (i.e., the set of all real-valued linear functionals on $X$).
A {\em support functional} for a point $x\in X$ is a linear functional $\varphi\in X^*$ which satisfies $\varphi(x)=\|x\|_X^2$ and $\|\varphi\|_{X^*} := \sup_{\|u\|=1} |\varphi(u)| =\|x\|_X$.
A non-zero point $x\in X$ is said to be \emph{smooth} if it has a unique support functional.
We denote by $\varphi_x$ the unique support functional for a smooth point $x$.
Equivalently, a non-zero point $x\in X$ is smooth if the map $\rho_u:\mathbb{R}\to\mathbb{R}$, $t \mapsto \|x+tu\|_X$ is differentiable at $t=0$ for each $u\in X$. In this case, the unique support functional for $x$ satisfies $\varphi_x(u)=\rho_u'(0)\|x\|_X$ for each $u\in X$; see \cite[Lemma 1]{kit-sch} for more details.

\begin{lemma}
\label{l:diff}
Let $Z=X\oplus_\infty \mathbb{R}$  be a cylindrical normed space and let $z=(x,y)\in Z$ be a non-zero point. 
\begin{enumerate}[(i)]
\item If $z \in K^\circ$ then $z$ is a smooth point in $Z$ with unique support functional,
\[\varphi_z: Z\to \mathbb{R}, \quad \varphi_z(a,b) = yb.\]
\item If $z\in\partial K$ then $z$ is not a smooth point in $Z$.
\item If $z\in K^c$ then $z$ is a smooth point in $Z$ if and only if $x$ is a smooth point in $X$. Moreover, if $z$ is smooth then its unique support functional is
\[\varphi_z:Z\to \mathbb{R}, \quad \varphi_z(a,b) = \varphi_x(a).\]
\end{enumerate}
\end{lemma}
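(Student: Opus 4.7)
The plan is to exploit the directional-derivative characterisation of smoothness mentioned in the paragraph preceding the lemma: a non-zero $z \in Z$ is smooth if and only if $\rho_w(t) := \|z + tw\|_Z$ is differentiable at $t = 0$ for every $w \in Z$, and in that case $\varphi_z(w) = \rho_w'(0)\,\|z\|_Z$. Writing $z = (x,y)$ and $w = (a,b)$, the key observation is
\begin{equation*}
\|z + tw\|_Z \;=\; \max\bigl\{\, \|x + ta\|_X,\; |y + tb|\, \bigr\},
\end{equation*}
so the behaviour of $\rho_w$ near $0$ is controlled by which of the two terms dominates. The three cases correspond precisely to the three possibilities for that dominance.

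For part (i), the strict inequality $\|x\|_X < |y|$ and continuity of both terms in $t$ guarantee that $\|x + ta\|_X < |y + tb|$ for all sufficiently small $|t|$, so $\rho_w(t) = |y + tb|$ in a neighbourhood of $0$. Since $y \neq 0$ (as $\|x\|_X < |y|$ and $z \neq 0$), this is differentiable at $0$ with $\rho_w'(0) = \sgn(y)\,b$, and combining with $\|z\|_Z = |y|$ yields $\varphi_z(a,b) = yb$. Part (iii) is handled symmetrically: if $\|x\|_X > |y|$ then $\rho_w(t) = \|x + ta\|_X$ near $0$, so differentiability of $\rho_w$ for every $w = (a,b) \in Z$ is equivalent to differentiability of $t \mapsto \|x + ta\|_X$ for every $a \in X$, which is exactly the smoothness of $x$ in $X$. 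The formula $\varphi_z(a,b) = \varphi_x(a)$ then follows from $\rho_w'(0)\,\|z\|_Z = \varphi_x(a)$, using $\|z\|_Z = \|x\|_X$.

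For part (ii), the plan is to exhibit two distinct support functionals for $z$, which suffices to rule out smoothness. The condition $z \in \partial K$ together with $z \neq 0$ forces both $\|x\|_X = |y|$ and $y \neq 0$; assume without loss of generality $y > 0$. Let $\psi \in X^*$ be a support functional for $x$, which exists by Hahn--Banach since $x \neq 0$, and define
\begin{equation*}
\varphi_1(a,b) := yb, \qquad \varphi_2(a,b) := \psi(a).
\end{equation*}
A direct check confirms $\varphi_i(z) = y^2 = \|z\|_Z^2$ and $\|\varphi_i\|_{Z^*} = \|z\|_Z$ for $i = 1, 2$ (using $|b|, \|a\|_X \leq \|(a,b)\|_Z$ for the upper norm bounds, and evaluating on $(0,1)$ and on $(x/\|x\|_X, 0)$ respectively for the lower bounds). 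Since $\varphi_1(0,1) = y \neq 0 = \varphi_2(0,1)$, the two functionals are distinct and $z$ is not smooth.

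The only real obstacle is the verification in part (ii) that both $\varphi_1$ and $\varphi_2$ achieve the required norm-attainment conditions simultaneously; once $\|x\|_X = |y|$ is invoked, everything collapses to a short direct computation, and the boundary case $y = 0$ is excluded automatically by $z \neq 0$.
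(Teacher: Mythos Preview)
Your proof is correct. Parts (i) and (iii) coincide with the paper's argument: both reduce $\rho_w(t)$ to a single branch of the max for small $|t|$ and read off the derivative. For part (ii), however, you take a genuinely different route. The paper stays with the directional-derivative characterisation: it picks the single direction $u = (0,\sgn(y))$ and shows the one-sided derivatives of $\rho_u$ at $0$ disagree (the right-hand limit is $1$, the left-hand limit is $0$, since for $t<0$ the $X$-branch of the max takes over). You instead use the support-functional definition directly, exhibiting two distinct supports $\varphi_1(a,b) = yb$ and $\varphi_2(a,b) = \psi(a)$ via Hahn--Banach applied to $x \in X$. Your approach is arguably more structural---it makes transparent that the non-smoothness on $\partial K$ comes from the two faces of the cylinder each contributing a support---while the paper's is more self-contained and avoids invoking Hahn--Banach. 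Both are short; neither has a real advantage in this setting.
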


\proof


$(i)$ If $z\in K^\circ$ then, for each  $u=(a,b)\in Z$, it suffices to note that
$\|z+tu\|_Z = |y+tb|$ for $|t|$ sufficiently small.

$(ii)$
Suppose $z \in \partial K \setminus \{0\}$.
As $y \neq 0$,
the point $u=(0,\sgn(y))\in Z$ is well-defined.
We now note that
\[\lim_{t\to0^+} \frac{1}{t}(\|z+tu\|_Z-\|z\|_Z) 
= \lim_{t\to0^+} \frac{1}{t}(|y+t\sgn(y)|-|y|)
= 1,\] whereas,
\[\lim_{t\to0^-} \frac{1}{t}(\|z+tu\|_Z-\|z\|_Z) 
= \lim_{t\to0^-} \frac{1}{t}(\|x\|_X-\|x\|_X)
= 0.\]

$(iii)$ If $z\in K^c$ then, for each  $u=(a,b)\in Z$, it suffices to note that
$\|z+tu\|_Z
= \|x+ta\|_X$
for $|t|$ sufficiently small.
\endproof

\begin{example}
\label{ex:cyl1}
Consider the cylindrical normed space  $\ell_{q,\infty}^{d+1}=\ell_q^d\oplus_\infty \mathbb{R}$ where $q\in[1,\infty)$ and $d\geq 1$. Let $z=(x,y)\in \ell_q^d\oplus_\infty \mathbb{R}$ be a smooth point  with unique support functional $\varphi_z$ and write $x=(x_1,\ldots, x_d)$. 
Let $u=(a,b)\in\ell_q^d\oplus_\infty \mathbb{R}$ and write $a=(a_1,\ldots,a_d)$.
If  $z\in K^\circ$ then, by Lemma \ref{l:diff}(i),
\[\varphi_z(u) = yb = \begin{bmatrix}0&\cdots&0&y\end{bmatrix}\begin{bmatrix}a_1\\\vdots\\a_d\\b\end{bmatrix}\]
If $z\in K^c$ then, by Lemma \ref{l:diff}(iii),
\[
\varphi_z(u) = \varphi_x(a) = \sum_{i=1}^d \frac{\sgn(x_i)|x_i|^{q-1}}{\|x\|_q^{q-2}}a_i= \begin{bmatrix}\frac{\sgn(x_1)|x_1|^{q-1}}{\|x\|_q^{q-2}}&\cdots&\frac{\sgn(x_d)|x_d|^{q-1}}{\|x\|_q^{q-2}}&0\end{bmatrix}\begin{bmatrix}a_1\\\vdots\\a_d\\b\end{bmatrix}\]
The cylindrical normed space  $\ell_{2,\infty}^3=\mathbb{R}^2\oplus_\infty \mathbb{R}$ is of particular interest. Here, if $z\in K^\circ$ then,
$\varphi_z(u) = \begin{bmatrix}0&0&y\end{bmatrix}\begin{bmatrix}a_1\\a_2\\b\end{bmatrix}$
and if  $z\in K^c$ then,
$
\varphi_z(u) = \begin{bmatrix}x_1 &x_2&0\end{bmatrix}\begin{bmatrix}a_1\\a_2\\b\end{bmatrix}$.
\end{example}




\subsection{Well-positioned frameworks}
\label{s:wp}
 The  \emph{rigidity map} for a normed space $X$ and graph $G=(V,E)$ is,
\begin{align*}
    f_G : X^V \rightarrow \mathbb{R}^E, \quad (x_v)_{v \in V} \mapsto \left( \|x_v-x_w\|_X  \right)_{vw\in E}.
\end{align*}
A framework $(G,p)$ in $X$ is {\em well-positioned} if the rigidity map $f_G$ is differentiable at $p$. 
 If $(G,p)$ is well-positioned and the differential $df_G(\cdot)$ achieves its maximum rank at $p$,
 then the framework $(G,p)$ is said to be {\em regular} and $p$ is said to be a {\em regular placement} of $G$ in $X$. 
Furthermore, if the differential $df_G(p):X^V\to \mathbb{R}^E$ is surjective then the framework $(G,p)$ is said to be {\em independent}. 

We will require the following two lemmas.


\begin{lemma}\label{lem:wp}
Let $(G,p)$ be a framework in a cylindrical normed space $X\oplus_\infty \mathbb{R}$.
The following statements are equivalent.
\begin{enumerate}[(i)]
    \item $(G,p)$ is well-positioned in $X\oplus_\infty \mathbb{R}$.
    \item  $p_v-p_w$ is a smooth point in $X\oplus_\infty \mathbb{R}$ for every edge $vw\in E$.
    \item $p_v-p_w\notin \partial K$ for each edge $vw\in E$ and the projected monochrome subframework $(G_X,p_X)$ is well-positioned in $X$.
\end{enumerate}
\end{lemma}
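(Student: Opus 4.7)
The overall plan is to prove (i)$\iff$(ii) from the chain rule together with the basic fact that differentiability of a norm at a nonzero point is precisely smoothness of that point, and then to prove (ii)$\iff$(iii) by invoking Lemma \ref{l:diff} edge by edge. To establish (i)$\iff$(ii), I would observe that $f_G$ is differentiable at $p$ if and only if each of its scalar components $p \mapsto \|p_v - p_w\|_Z$ is differentiable at $p$. Since the map $p \mapsto p_v - p_w$ is linear and the framework condition $p_v \neq p_w$ ensures $p_v - p_w \neq 0$, the chain rule reduces this to differentiability of $\|\cdot\|_Z$ at $p_v - p_w$, which is by definition the statement that $p_v - p_w$ is a smooth point of $Z$.

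For (ii)$\Rightarrow$(iii), I would first apply Lemma \ref{l:diff}(ii) to deduce that no edge difference can lie in $\partial K$, since a nonzero point in $\partial K$ is never smooth. This justifies the existence of the edge-labelling $\kappa_p$ and hence the monochrome decomposition. For each blue edge $vw \in E(G_X)$ we have $p_v - p_w \in K^c$, so Lemma \ref{l:diff}(iii) gives that $\pi_X(p_v - p_w) = (p_X)_v - (p_X)_w$ is smooth in $X$; the strict inequality $\|\pi_X(p_v - p_w)\|_X > |\pi_{\mathbb{R}}(p_v - p_w)| \geq 0$ forces this projected difference to be nonzero, so $(G_X, p_X)$ is a well-defined framework in $X$ and the first equivalence, applied inside $X$, yields well-positionedness of $(G_X, p_X)$. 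Conversely, assuming (iii), pick any edge $vw \in E$: if $\kappa_p(vw) = \mathrm{green}$, then $p_v - p_w \in K^\circ$ is automatically smooth by Lemma \ref{l:diff}(i); if $\kappa_p(vw) = \mathrm{blue}$, then $p_v - p_w \in K^c$ and smoothness of the projected difference in $X$ combines with Lemma \ref{l:diff}(iii) to give smoothness in $Z$.

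The main work is bookkeeping rather than genuine difficulty. The one place where something needs to be checked rather than simply read off from Lemma \ref{l:diff} is the verification that the projected monochrome framework $(G_X, p_X)$ is a framework at all, i.e.\ that each blue edge projects to a nonzero difference in $X$; this is immediate from the strict inequality defining $K^c$. All other steps reduce to applying Lemma \ref{l:diff} in each of the three possible cases for the location of an edge difference relative to the double cone $K$.
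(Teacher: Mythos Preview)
Your proof is correct. The paper's own proof consists only of two citations: \cite[Proposition 6]{kit-sch} for (i)$\Leftrightarrow$(ii) and \cite[Proposition 39]{matrixnorm} for (ii)$\Leftrightarrow$(iii). Your argument follows the same logical split into these two equivalences but is self-contained, unpacking the cited results using only the chain rule and Lemma~\ref{l:diff} already proved in the paper; this is presumably what the cited propositions do in any case, so the approaches coincide in substance.
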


\proof
$(i)\Leftrightarrow (ii)$: See \cite[Proposition 6]{kit-sch}.
$(ii)\Leftrightarrow (iii)$: See \cite[Proposition 39]{matrixnorm}.
\endproof



We conclude this section with the following helpful computational tool. 
Let $(G,p)$ be a well-positioned framework in a $d$-dimensional normed linear space $X$.
The {\em rigidity matrix} $R(G,p)$ is a matrix of linear functionals with rows indexed by the edges of $G$ and columns indexed by the vertices of $G$.
The $(e,v)$-entry for an edge $e$ and vertex $v$ is:
\[r_{e,v} = \left\{\begin{array}{ll}
\varphi_{p_v-p_w}     & \mbox{if }e=vw \\
0     & \mbox{otherwise}, 
\end{array}\right.\]
where $\varphi_{p_v-p_w}$ is the unique support functional for $p_v-p_w$. Thus, the row entries for an edge $vw$ are:
\[
\kbordermatrix{
&&&&v&&&&w&&& \\
vw&0&\cdots&0& \varphi_{p_v-p_w}&0&\cdots &0&\varphi_{p_w-p_v}&0&\cdots&0}.\]
Regarding the rigidity matrix as a linear transformation $R(G,p):X^V\to\mathbb{R}^E$ it can be shown that $R(G,p) = D(G,p)\circ df_G(p)$ where $D(G,p)$ is the diagonal matrix with rows and columns indexed by $E$ and $(vw,vw)$-entry $\|p_v-p_w\|_X$ for each edge $vw\in E$. (See \cite{kit-sch} for more details).
In practice, each support functional in the rigidity matrix $R(G,p)$ is replaced by a representing $1\times d$ row matrix, resulting in a $|E|\times d|V|$-matrix with real entries. This process is illustrated in Example \ref{ex:mono2} below.

\begin{example}
\label{ex:mono2}
Let $(G,p)$ be a framework in $\mathbb{R}^2\oplus_\infty\mathbb{R}$ and let $vw$ be an edge of $G$.
Write $p_v-p_w=(x_1,x_2,y)$.
If $vw$ is a blue edge then, by Example \ref{ex:cyl1}, the support functional for $p_v-p_w$ is represented by the row matrix $[x_1 \,\,\,x_2 \,\,\,0]$ and so the $vw$-row of the rigidity matrix has entries:
\[
\kbordermatrix{
&&&&v&&&&w&&& \\
vw&0&\cdots&0& \overbrace{x_1 \quad x_2 \quad 0}&0&\cdots 
&0&\overbrace{-x_1 \quad -x_2 \quad 0}&0&\cdots&0}.
\]
If $vw$ is a green edge then, by Example \ref{ex:cyl1}, the support functional for $p_v-p_w$ is represented by the row matrix $[0 \,\,\,0 \,\,\,y]$ and so the $vw$-row of the rigidity matrix $R(G,p)$ has entries:
\[
\kbordermatrix{
&&&&v&&&&w&&& \\
vw&0&\cdots&0&\overbrace{0 \quad 0 \quad y }&0&\cdots &0&\overbrace{0\quad 0\quad -y} &0&\cdots&0}.
\]

Consider again the  framework $(K_3,p)$ in $\mathbb{R}^2\oplus_\infty\mathbb{R}$ described in Example \ref{ex:mono}. 
By Lemma \ref{lem:wp},
the framework $(K_3,p)$ is well-positioned. The rigidity matrix is,
\[
R(K_3,p) =
\kbordermatrix{
&v_{1,1}&v_{1,2}&v_{1,3}&v_{2,1}&v_{2,2}&v_{2,3}&v_{3,1}&v_{3,2}&v_{3,3} \\
v_1v_2&0&-2&0& 0&2&0 &0&0&0\\
v_2v_3&0&0&0&  -\frac{1}{2}&2&0 &\frac{1}{2}&-2&0\\
v_1v_3&0&0&-1&  0&0&0 &0&0&1\\
}.
\]
Since $\rank df_{K_3}(p) = \rank R(K_3,p) = 3 $, the framework $(K_3,p)$ is  independent in $\mathbb{R}^2\oplus_\infty\mathbb{R}$.
\end{example}

\section{Rigidity in cylindrical normed spaces}
\label{Sec:Rigidity}
In this section, we obtain complete combinatorial characterisations of rigidity for classes of $3$-dimensional cylindrical normed spaces. We also present three rigidity preserving graph operations for $3$-dimensional cylindrical normed spaces; the $0$-extension, the $1$-extension and the vertex-split.

\subsection{Graph rigidity in cylindrical spaces}\label{S:IR}

A \emph{rigid motion} of a normed linear space $X$ is a collection $\alpha=\{\alpha_x:[-1,1]\to X\}_{x\in X}$ of continuous paths, with the following properties:
\begin{enumerate}[(a)]
\item
$\alpha_x(0)=x$ for all $x\in X$;
\item
$\alpha_x(t)$ is differentiable at $t=0$ for all $x\in X$; and
\item
$\|\alpha_x(t)-\alpha_y(t)\|_X = \|x-y\|_X$ for all $x,y\in X$ and for all $t\in [-1,1]$.
\end{enumerate} 
We write~$\mathcal{R}(X)$ for the set of all rigid motions of~$X$. 

A map $\eta : X \to X$ of the form $\eta(x) = \alpha'_x(0)$ where $\alpha \in R(X)$ is referred to as an {\em infinitesimal
rigid motion} of $X$. It can be shown that infinitesimal rigid motions are affine maps and are closed under pointwise addition and scalar multiplication. (See \cite[\S2.3]{matrixnorm}). The linear space of infinitesimal
rigid motions of $X$ is denoted $\mathcal{T}(X)$.

\begin{example}
\label{ex:rigmotions}
 Recall that $\mathcal{T}(\mathbb{R}^d)$ is the space of affine maps $\eta:\mathbb{R}^d\to \mathbb{R}^d$ of the form $\eta(x) = Ax+c$ where $A$ is a skew-symmetric $d\times d$ matrix and $c\in \mathbb{R}^d$.  
 In particular, $\dim \mathcal{T}(\mathbb{R}^d) = \frac{d(d+1)}{2}$.
 Note that for every $d$-dimensional real normed linear space $X$, $\mathcal{T}(X)$ contains the space of affine maps $\eta:X\to X$ of the form $\eta(x) = x+c$ where $c\in X$.
 Moreover, by \cite[Corollary 3.3.4]{minkowski}, $X$ is isometrically isomorphic to a normed space $Z=(\mathbb{R}^d,\|\cdot\|_Z)$ with the property that every isometry of $Z$ is an isometry of the Euclidean space $\mathbb{R}^d$. Thus, $\mathcal{T}(X)$ is linearly isomorphic to $\mathcal{T}(Z)$ where  
 $\mathcal{T}(Z)\subseteq \mathcal{T}(\mathbb{R}^d)$ and 
 $\dim \mathcal{T}(X)\in\{d,d+1,\ldots, \frac{d(d+1)}{2}\}$.
\end{example}

\begin{example}
Consider the cylindrical normed space $\mathbb{R}^2\oplus_\infty \mathbb{R}$.
Define for each $z=(x,y)\in \mathbb{R}^2\oplus_\infty \mathbb{R}$ the continuous map $\alpha_z:[-1,1]\to \mathbb{R}^2\oplus_\infty \mathbb{R}$, $\alpha_z(\theta) = (T_{\theta}(x),y)$ where the transformation $T_\theta:\mathbb{R}^2\to\mathbb{R}^2$ is clockwise rotation about the origin by the angle $2\pi\theta$.
Then the collection $\alpha=\{\alpha_z\}_{z\in \mathbb{R}^2\oplus_\infty \mathbb{R}}$ is a rigid motion of $\mathbb{R}^2\oplus_\infty \mathbb{R}$.
The induced infinitesimal rigid motion is the linear map $\eta(x,y) = (x^\perp,0)$ where $x^\perp  =(-x_2,x_1)$ for all $x=(x_1,x_2)\in\mathbb{R}^2$.
Note that in general, $\dim \mathcal{T}(X\oplus_\infty\mathbb{R}) = \dim \mathcal{T}(X) +1$ (see \cite[Theorem 44]{matrixnorm}).
\end{example}

Let $(G,p)$ be a well-positioned framework in a normed space $X$ and let $df_G(p):X^V\to \mathbb{R}^E$ be the differential of the rigidity map at $p$. 
The elements of $\ker df_G(p)$ are referred to as {\em infinitesimal flexes} of  the framework $(G,p)$.
Define
\[\mathcal{T}(G,p)=\left\{ \zeta\in X^V\mid  \zeta_v=\eta(p_v)\text{ for some $\eta\in\mathcal{T}(X)$} \right\}.\]
It can be shown that $\mathcal{T}(G,p)\subseteq \ker df_G(p)$ (see \cite[Lemma 2.3]{kit-pow}).
The elements of $\mathcal{T}(G,p)$ are referred to as the {\em trivial infinitesimal flexes} of $(G,p)$.
A well-positioned framework $(G,p)$ is \emph{infinitesimally rigid} if every infinitesimal flex of $(G,p)$ is trivial (i.e.~$\ker df_G(p)=\mathcal{T}(G,p)$); otherwise, $(G,p)$ is \emph{infinitesimally flexible}.
A well-positioned framework is \emph{minimally infinitesimally rigid} if it is both infinitesimally rigid and independent.



\begin{theorem}\label{thm:product}
    Let $(G,p)$ be a well-positioned framework in a cylindrical normed space $X \oplus_\infty \mathbb{R}$.
    The following statements are equivalent.
    \begin{enumerate}[(i)]
        \item $(G,p)$ is infinitesimally rigid (resp., independent) in $X \oplus_\infty \mathbb{R}$.
        \item The projected monochrome subframework $(G_X, p_X)$ is infinitesimally rigid (resp., independent) in $X$ and the projected monochrome subframework $(G_\mathbb{R}, p_{\mathbb{R}})$ is infinitesimally rigid (resp., independent) in $\mathbb{R}$.
    \end{enumerate}
\end{theorem}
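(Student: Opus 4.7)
The plan is to derive a block-diagonal decomposition of the rigidity matrix $R(G,p)$ from Lemma \ref{l:diff} and then argue that both its kernel and the space of trivial flexes decompose compatibly along the same blocks. For every blue edge $vw \in E(G_X)$, we have $p_v - p_w \in K^c$, so Lemma \ref{l:diff}(iii) gives $\varphi_{p_v-p_w}(a,b) = \varphi_{\pi_X(p_v-p_w)}(a)$ and the corresponding row of $R(G,p)$ has zero entries in every $\mathbb{R}$-coordinate column. For every green edge $vw \in E(G_\mathbb{R})$, Lemma \ref{l:diff}(i) gives $\varphi_{p_v-p_w}(a,b) = \pi_\mathbb{R}(p_v-p_w)\cdot b$, so the corresponding row has zero entries in every $X$-coordinate column. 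Reordering columns so that all $X$-coordinates precede all $\mathbb{R}$-coordinates, and ordering rows by colour, the matrix takes the block-diagonal form
\[R(G,p) \;\cong\; \begin{pmatrix} R(G_X,p_X) & 0 \\ 0 & R(G_\mathbb{R},p_\mathbb{R}) \end{pmatrix}.\]

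From this the independence equivalence is immediate, since $R(G,p)$ has row rank $|E|$ if and only if each diagonal block has full row rank. For the infinitesimal rigidity equivalence, the same decomposition yields $\ker df_G(p) \cong \ker df_{G_X}(p_X) \times \ker df_{G_\mathbb{R}}(p_\mathbb{R})$ under the identification $(X \oplus_\infty \mathbb{R})^V = X^V \times \mathbb{R}^V$. The remaining task is to establish the analogous decomposition $\mathcal{T}(G,p) \cong \mathcal{T}(G_X,p_X) \times \mathcal{T}(G_\mathbb{R},p_\mathbb{R})$. Given $\eta_X \in \mathcal{T}(X)$ and $\eta_\mathbb{R} \in \mathcal{T}(\mathbb{R})$, the map $(x,y) \mapsto (\eta_X(x),\eta_\mathbb{R}(y))$ is evidently an infinitesimal rigid motion of $X \oplus_\infty \mathbb{R}$, producing a natural injection $\mathcal{T}(X) \oplus \mathcal{T}(\mathbb{R}) \hookrightarrow \mathcal{T}(X \oplus_\infty \mathbb{R})$. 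Since $\dim \mathcal{T}(\mathbb{R}) = 1$ and $\dim \mathcal{T}(X \oplus_\infty \mathbb{R}) = \dim \mathcal{T}(X) + 1$ by \cite[Theorem 44]{matrixnorm}, this injection is an isomorphism, and evaluating each product rigid motion at $p$ gives the desired decomposition of $\mathcal{T}(G,p)$.

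Combining the two decompositions, $(G,p)$ is infinitesimally rigid if and only if $\ker df_G(p) = \mathcal{T}(G,p)$, which holds if and only if $\ker df_{G_X}(p_X) = \mathcal{T}(G_X,p_X)$ and $\ker df_{G_\mathbb{R}}(p_\mathbb{R}) = \mathcal{T}(G_\mathbb{R},p_\mathbb{R})$, i.e.\ both projected monochrome subframeworks are infinitesimally rigid. The block decomposition of $R(G,p)$ is essentially a bookkeeping consequence of Lemma \ref{l:diff}; the only mildly subtle ingredient is the product identification of $\mathcal{T}(G,p)$, which rests entirely on the dimension formula $\dim \mathcal{T}(X \oplus_\infty \mathbb{R}) = \dim \mathcal{T}(X) + 1$ already available from \cite{matrixnorm}. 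I expect no serious obstacle; the proof is largely a translation of Lemma \ref{l:diff} into matrix language.
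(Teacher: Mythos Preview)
Your argument is correct. The paper's own proof is the one-line citation ``Apply \cite[Theorem~47]{matrixnorm}'', deferring entirely to the general product-norm machinery developed there. Your route instead unpacks that result directly in the cylindrical case: the block-diagonal form of $R(G,p)$ is exactly what Lemma~\ref{l:diff} delivers, the independence statement is then immediate, and for rigidity the only external ingredient you invoke is the dimension formula $\dim\mathcal{T}(X\oplus_\infty\mathbb{R})=\dim\mathcal{T}(X)+1$ from \cite[Theorem~44]{matrixnorm}, which forces every infinitesimal rigid motion of $Z$ to split as a product. This makes your proof more self-contained than the paper's (you use one dimension count rather than a black-box theorem), at the price of being tailored to the $\oplus_\infty$ product; the cited result in \cite{matrixnorm} handles more general product norms in one stroke, which is why the authors chose simply to quote it.
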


\proof
Apply \cite[Theorem 47]{matrixnorm}.
\endproof

A graph $G$ is {\em rigid} (respectively, {\em minimally rigid}, {\em independent}) in a normed linear space $X$ if there exists a well-positioned placement $p\in X^V$ such that the framework $(G,p)$ is infinitesimally rigid (respectively,  minimally infinitesimally rigid, independent). 

\begin{example}
It is well-known that a graph is rigid (respectively, minimally rigid, independent) in $\mathbb{R}$  if and only if it is connected (respectively, a tree, a forest). 
\end{example}

\begin{example}
\label{ex:mono3}
By Theorem \ref{thm:product}, it is clear that a graph $G=(V,E)$ with fewer than $2(|V|-1)$ edges cannot be rigid in a cylindrical normed space $X\oplus_\infty\mathbb{R}$. Indeed, given any well-positioned framework $(G,p)$ in $X\oplus_\infty\mathbb{R}$, one of the two projected monochrome subframeworks will have fewer than $|V|-1$ edges and is hence disconnected. 
\end{example}

A normed space $X$ is said to be \emph{generic} if for every finite simple graph $G$ the set of regular placements of $G$ in $X$ is dense in $X^V$.
Examples of generic spaces include $\ell_q^d$ for $q\in(1,\infty)$ and $d\geq 1$ (\cite[Lemma 2.7]{kit-pow-inf}), and non-examples include polyhedral normed spaces (\cite[Lemma 16]{polyhedra}).
Importantly, Euclidean space $\mathbb{R}^d$ is a generic normed space.

\begin{proposition}
\label{mainprop}
    Let $Z=X \oplus_\infty \mathbb{R}$ be a cylindrical normed space  and let
$G= (V,E)$ be a graph with a spanning subtree (respectively, subforest) $T$.
    If $X$ is a generic space of dimension at least $2$ then the following statements are equivalent.
    \begin{enumerate}[(i)]
        \item\label{mainthmitem1} There exists an infinitesimally rigid (respectively, independent) framework $(G,p)$ in $Z$ such that the  monochrome subframeworks $(G_X,p)$ and $(G_\mathbb{R},p)$ satisfy $G_X=G-T$ and $G_\mathbb{R}=T$.
        \item\label{mainthmitem2} The subgraph $G-T$ is rigid (respectively, independent) in $X$.
    \end{enumerate}
\end{proposition}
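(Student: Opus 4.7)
The plan is to deduce both directions from Theorem~\ref{thm:product} together with the Key Lemma (Lemma~\ref{mainlem}).

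The easy direction, (i)$\Rightarrow$(ii), is immediate: if $(G,p)$ is infinitesimally rigid (resp.~independent) in $Z$ with $G_X=G-T$, then Theorem~\ref{thm:product} forces the projected monochrome subframework $(G_X,p_X)=(G-T,p_X)$ to be infinitesimally rigid (resp.~independent) in $X$, which by definition says that $G-T$ is rigid (resp.~independent) in $X$.

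For the harder direction (ii)$\Rightarrow$(i), the first step is to invoke Lemma~\ref{mainlem} to produce a framework $(G,p^{(0)})$ in $Z$ with $p^{(0)}_v-p^{(0)}_w\in K^\circ$ for $vw\in T$ and $p^{(0)}_v-p^{(0)}_w\in K^c$ for $vw\in E\setminus T$. By the definition of the monochrome subgraphs this already gives $G_{\mathbb{R},p^{(0)}}=T$ and $G_{X,p^{(0)}}=G-T$. What remains is to perturb $p^{(0)}$ so that the projected subframework in $X$ becomes infinitesimally rigid (resp.~independent), while the one in $\mathbb{R}$ stays so and the monochrome colouring is preserved.

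To do this I would exploit that $X$ is generic: the set of regular placements of $G-T$ in $X^V$ is dense. Pick $q\in X^V$ regular for $G-T$ and arbitrarily close to $\pi_X(p^{(0)})$, and define $p\in Z^V$ by $p_v:=(q_v,\pi_\mathbb{R}(p^{(0)}_v))$. Since $K^\circ$ and $K^c$ are open in $Z$, for $q$ close enough to $\pi_X(p^{(0)})$ each edge difference $p_v-p_w$ lies in the same region ($K^\circ$ or $K^c$) as $p^{(0)}_v-p^{(0)}_w$, so the monochrome subframeworks of $(G,p)$ coincide with those of $(G,p^{(0)})$. By construction $(G_X,p_X)=(G-T,q)$ is a regular framework, and $G-T$ is rigid (resp.~independent) in $X$ by hypothesis, so $(G-T,q)$ is infinitesimally rigid (resp.~independent) in $X$. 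Meanwhile, for each $vw\in T$ we have $p_v-p_w\in K^\circ$ and hence $|\pi_\mathbb{R}(p_v-p_w)|>0$, so $(T,p_\mathbb{R})$ is well-positioned in $\mathbb{R}$; since $T$ is a spanning tree (resp.~spanning forest), $(T,p_\mathbb{R})$ is then automatically infinitesimally rigid (resp.~independent) in $\mathbb{R}$ because connected 1-dimensional frameworks are rigid and acyclic ones are independent. Theorem~\ref{thm:product} applied in the reverse direction now gives (i).

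The only real obstacle is to arrange a single placement that simultaneously (a) realises the prescribed colouring and (b) projects to a regular placement in $X$. The openness of $K^\circ$ and $K^c$ handles (a) under small perturbations, while genericity of $X$ handles (b); the two are compatible precisely because the perturbation can be restricted to the $X$-component, leaving the $\mathbb{R}$-component (and hence the monochrome subframework on $T$) unchanged.
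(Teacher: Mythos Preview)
Your proof is correct and follows essentially the same approach as the paper: both directions use Theorem~\ref{thm:product}, and for (ii)$\Rightarrow$(i) both invoke Lemma~\ref{mainlem}, then perturb using the openness of $K^\circ$, $K^c$ together with the genericity of $X$ to land on a placement whose $X$-projection is regular for $G-T$. The only cosmetic difference is that the paper phrases the perturbation as intersecting open dense subsets of $Z^V$ (and separately cites density of well-positioned placements), whereas you perturb only the $X$-component explicitly; your version is slightly more direct and makes it transparent that $(G,p)$ is well-positioned via Lemma~\ref{lem:wp}, since $(G-T,q)$ regular in $X$ is in particular well-positioned.
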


\begin{proof}
    Suppose (\ref{mainthmitem1}) holds. By Theorem \ref{thm:product}, the projected monochrome subframework $(G-T,p_X)$ is infinitesimally rigid (respectively, independent) in $X$. Thus (\ref{mainthmitem1}) implies (\ref{mainthmitem2}).
    
    Suppose (\ref{mainthmitem2}) holds.
    By the Key Lemma (Lemma \ref{mainlem}),
    there exists a framework $(G,q)$ in $Z$ with  monochrome subframeworks $(G-T,q)$ and $(T,q)$.
     Since $K^c$ and $K^\circ$ are open sets, we must have $\kappa_p = \kappa_q$ for all placements $p$ in some open neighbourhood $U$ of $q$.
    Since $X$ is generic, the set $R$ of regular placements of $G-T$ in $X$ is both open and dense in $X^V$. The projection $\pi:Z^V\to X^V$, $\pi(p)=p_X$, is continuous and an open map and so the preimage $\pi^{-1}(R)$ is open and dense in $Z^V$. 
    Thus $U\cap \pi^{-1}(R)$ is a non-empty open subset of $Z^V$.
    Since the set of well-positioned placements is always dense (see \cite[Lemma 4.1]{dew1}), there exists a well-positioned framework $(G,p)$ in $Z$ such that $p$ lies in $U\cap \pi^{-1}(R)$.
    Note that $(G,p)$ has monochrome subframeworks $(G-T,p)$ and $(T,p)$ and  that the projected monochrome subframework $(G-T,p_X)$ is regular in $X$. 
    Since $G-T$ is rigid (respectively, independent) in $X$,
    $(G-T,p_X)$ is infinitesimally rigid (respectively, independent) in $X$.
    Similarly,
    since $T$ is a spanning subtree (respectively, subforest),
    $(T,p_\mathbb{R})$ is infinitesimally rigid (respectively, independent) in $\mathbb{R}$.
    The result now follows from Theorem \ref{thm:product}.
\end{proof}

We now present a characterisation of rigid graphs for a large class of cylindrical normed spaces of dimension $d\geq 3$.

\begin{theorem}\label{graphthm}
    Let $X \oplus_\infty \mathbb{R}$ be a cylindrical normed space and let $G= (V,E)$ be a graph. If $X$ is a generic normed space of dimension at least $2$ then the following statements are equivalent.
    \begin{enumerate}[(i)]
        \item\label{graphthmitem1} $G$ is rigid (respectively, minimally rigid, independent) in $X \oplus_\infty \mathbb{R}$.
        \item\label{graphthmitem2} $G$ is an edge-disjoint union of spanning subgraphs $H$ and $T$,
        where $H$ is rigid (respectively, minimally rigid, independent) in $X$ and $T$ is connected (respectively, a tree, a forest).
    \end{enumerate}
\end{theorem}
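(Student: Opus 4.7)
The plan is to prove the two directions separately, using Proposition \ref{mainprop} for the ``hard'' direction and Theorem \ref{thm:product} together with the definition of well-positioned frameworks for the ``easy'' direction. Both directions are essentially bookkeeping once one has the right language: the blue/green edge partition coming from a well-positioned cylindrical placement is exactly the $H/T$ decomposition in (ii).

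For (\ref{graphthmitem2})$\Rightarrow$(\ref{graphthmitem1}), I would argue as follows. Suppose $G = H \cup T$ with $H,T$ edge-disjoint and spanning, $H$ rigid (respectively minimally rigid, independent) in $X$, and $T$ connected (respectively a tree, a forest). View $T$ as a spanning subtree (resp.\ spanning subforest) of $G$; then $G - T = H$. In the rigid and independent cases one directly applies Proposition \ref{mainprop} to get a framework $(G,p)$ in $Z$ that is infinitesimally rigid (resp.\ independent), proving that $G$ is rigid (resp.\ independent) in $Z$. For the minimally rigid case, $H$ rigid and independent in $X$ together with $T$ a spanning tree makes both halves of Proposition \ref{mainprop} apply simultaneously, yielding a framework that is both infinitesimally rigid and independent in $Z$, which is exactly minimal rigidity of $G$ in $Z$.

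For (\ref{graphthmitem1})$\Rightarrow$(\ref{graphthmitem2}), take a well-positioned placement $p$ certifying rigidity (respectively minimal rigidity, independence) of $G$ in $Z$. By Lemma \ref{lem:wp}, for each edge $vw\in E$ we have $p_v-p_w\notin\partial K$, so the edge-labelling $\kappa_p$ is defined and partitions $E$ into the blue edges (those of $G_X$) and the green edges (those of $G_\mathbb{R}$). Set $H := G_X$ and $T := G_\mathbb{R}$; these are edge-disjoint, spanning, and their union is $G$. Theorem \ref{thm:product} now transfers the rigidity/independence properties of $(G,p)$ to the projected monochrome subframeworks $(G_X,p_X)$ in $X$ and $(G_\mathbb{R},p_\mathbb{R})$ in $\mathbb{R}$. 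Using the standard identifications ``rigid in $\mathbb{R}$ $=$ connected'', ``minimally rigid in $\mathbb{R}$ $=$ spanning tree'' and ``independent in $\mathbb{R}$ $=$ forest'' recalled just after Theorem \ref{thm:product}, one reads off the required combinatorial description of $T$, while the conclusion for $H$ on the $X$ side is literally the statement of Theorem \ref{thm:product}.

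The only real obstacle is the minimally rigid case on the forward direction, but it is only apparent: one must remember that minimal rigidity in $Z$ is defined as the existence of a placement that is simultaneously infinitesimally rigid and independent, and Theorem \ref{thm:product} is stated with the ``resp.'' clause precisely so that both properties pass to the two projected monochrome subframeworks at once. Hence the two ``resp.'' statements in the theorem split cleanly along the two summands of Theorem \ref{thm:product}, and no additional genericity argument is needed beyond what was already used inside Proposition \ref{mainprop}.
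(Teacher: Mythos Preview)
Your approach is the same as the paper's, but there is one genuine slip in the rigid case of (\ref{graphthmitem2})$\Rightarrow$(\ref{graphthmitem1}). You write ``View $T$ as a spanning subtree (resp.\ spanning subforest) of $G$; then $G-T=H$'', but in the rigid clause $T$ is only assumed to be \emph{connected}, not a tree, so Proposition~\ref{mainprop} cannot be invoked directly with this $T$. The repair (which the paper carries out explicitly) is to pass to a spanning subtree $T'\subseteq T$ and set $H':=H\cup(T-T')=G-T'$; since $H'$ contains the rigid spanning subgraph $H$, it is itself rigid in $X$, and now Proposition~\ref{mainprop} applies to the pair $(H',T')$ to produce an infinitesimally rigid placement of $G$ in $Z$. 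With this one-line fix inserted, your argument coincides with the paper's proof.
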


\begin{proof}
If $G$ is minimally rigid in  $X \oplus_\infty \mathbb{R}$
then, by Theorem \ref{thm:product}, we can set $H=G_{X}$ and $T=G_{\mathbb{R}}$.
Conversely, suppose $G$ is an edge-disjoint union of spanning subgraphs $H$ and $T$ where $H$ is minimally rigid in $X$ and $T$ is a tree.
By Proposition \ref{mainprop}, there exists a   minimally infinitesimally rigid framework $(G,p)$ in $X \oplus_\infty \mathbb{R}$ with projected monochrome subframeworks $(H,p_X)$ and $(T,p_\mathbb{R})$.
Thus, $G$ is minimally rigid in $X \oplus_\infty \mathbb{R}$.

The analogous implication $(i)\Rightarrow (ii)$ for both rigidity and independence is proved in a similar way. The converse statement $(ii)\Rightarrow (i)$ for independence is also proved in a similar way. It remains to prove the implication $(ii)\Rightarrow (i)$ in the case of rigidity.  Suppose $G$ is an edge-disjoint union of spanning subgraphs $H$ and $T$, where $H$ is rigid in $X$ and $T$ is connected. Choose a spanning subtree $T'$ of $T$ and let $H'=H \cup (T-T')$. Then $H'$ is rigid in $X$. 
By Proposition \ref{mainprop}, there exists an infinitesimally rigid framework $(G,p)$ in $X \oplus_\infty \mathbb{R}$ with projected monochrome subframeworks $(H',p_X)$ and $(T',p_\mathbb{R})$.
Thus, $G$ is rigid in $X \oplus_\infty \mathbb{R}$.
\end{proof}



Before stating our main theorem on rigid graphs in $3$-dimensional cylindrical normed spaces we recall the following terminology and results.
Let $k$ and $\ell$ be integers with $k\geq 2$ and $\ell\in\{k, k+1\}$.
A graph $G=(V,E)$ is said to be \emph{$(k,\ell)$-sparse} if for every subgraph $H=(V(H),E(H))$ of $G$ with at least one edge we have $|E(H)| \leq k|V(H)| - \ell$.
A graph $G$ is said to be \emph{$(k,\ell)$-tight} if it is $(k,\ell)$-sparse and satisfies $|E|=k|V|-\ell$.

\begin{theorem}\label{t:2d}
Let $X$ be a 2-dimensional normed space and let $G=(V,E)$ be a  graph.
\begin{enumerate}[(a)]
\item If $X$ is isometrically isomorphic to $\mathbb{R}^2$ then the following statements are equivalent.
\begin{enumerate}[(i)]
\item $G$ is minimally rigid (respectively, independent) in $X$.
\item $G$ is $(2,3)$-tight (respectively, $(2,3)$-sparse).
\end{enumerate}
\item If $X$ is not isometrically isomorphic to $\mathbb{R}^2$ then the following statements are equivalent.
\begin{enumerate}[(i)]
\item $G$ is minimally rigid (respectively, independent) in $X$.
\item $G$ is $(2,2)$-tight (respectively, $(2,2)$-sparse). 
\end{enumerate}
\end{enumerate}
\end{theorem}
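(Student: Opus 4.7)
The overall strategy is to reduce each part of the theorem to a dimension count on the rigidity matrix (for necessity) together with an inductive construction of $(2,\ell)$-tight graphs via rigidity-preserving graph operations (for sufficiency). Part (a) is the classical theorem of Pollaczek-Geiringer, and part (b) is established in \cite{dew2}; I would follow the same template in both cases, highlighting the one point where they diverge.

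For necessity I would compute $\dim \mathcal{T}(X)$ and apply the inclusion $\mathcal{T}(G,p) \subseteq \ker df_G(p)$ to obtain $\rank df_G(p) \leq 2|V| - \dim \mathcal{T}(G,p)$ for any well-positioned framework $(G,p)$ in $X$. In case (a), translations and the standard rotation give $\dim \mathcal{T}(\mathbb{R}^2) = 3$, so any independent framework satisfies $|E| \leq 2|V| - 3$; restricting to induced subframeworks yields the $(2,3)$-sparsity count on every subgraph. In case (b), Example \ref{ex:rigmotions} together with \cite[Corollary 3.3.4]{minkowski} shows that when $X$ is a 2-dimensional non-Euclidean normed space the only infinitesimal rigid motions are translations, so $\dim \mathcal{T}(X) = 2$, and the identical restriction argument yields the $(2,2)$-sparsity count.

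For sufficiency I would prove by induction on $|V|$ that every $(2,\ell)$-tight graph admits a minimally infinitesimally rigid placement in $X$. The combinatorial half is a Henneberg-type construction: every $(2,\ell)$-tight graph can be built from a base case (an edge when $\ell = 2$, a triangle when $\ell = 3$) by a sequence of $0$-extensions (attaching a new degree-$2$ vertex via two edges) and $1$-extensions (subdividing an edge and attaching the new vertex to a third existing vertex). The geometric half is to verify that both operations preserve generic minimal rigidity in $X$: the $0$-extension is a direct check since a generically placed degree-$2$ vertex admits no nontrivial infinitesimal motion once its two neighbours are fixed, while the $1$-extension requires a perturbation argument showing that subdividing an edge $vw$ at a new vertex $u$ with an additional edge $uz$ preserves the rank of the rigidity matrix at generic placements.

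The hard part will be the $1$-extension step in case (b). In the Euclidean setting the standard argument uses smoothness of the norm and the full rotational symmetry of $B_X$; in a general non-Euclidean normed plane the unit ball may have corners, and the support functionals associated with the three edges incident to $u$ need not vary continuously under perturbation. Overcoming this requires the more delicate genericity and support-functional arguments carried out in \cite{dew2}, which provide the main technical input beyond the template sketched above.
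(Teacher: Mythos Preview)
The paper does not actually prove this theorem: its entire proof consists of the two sentences ``Part (a) is due to Pollaczek-Geiringer \cite{pollaczek27} and was later proved independently by Laman \cite{laman}. Part (b) is proved in \cite{dew2} (see also \cite{polyhedra,kit-pow}).'' So there is nothing substantive to compare against; you have gone beyond the paper by sketching what those cited proofs contain, which is entirely appropriate for a background result of this kind.

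Your outline of part (a) is the standard Henneberg--Laman argument and is correct. For part (b) two points deserve comment. First, your base case is wrong: a single edge has $|E|=1\neq 2\cdot 2-2$, so it is not $(2,2)$-tight. For simple graphs the smallest nontrivial $(2,2)$-tight graph is $K_4$; there are no simple $(2,2)$-tight graphs on two or three vertices. Second, and more substantively, the Henneberg template with only $0$- and $1$-extensions is not how all of the cited references actually proceed. In particular, \cite{polyhedra} handles polyhedral norms via an induced edge-colouring and a spanning-tree decomposition argument rather than via extension moves, and the general result in \cite{dew2} combines several strands (smooth, strictly convex, and limiting arguments) rather than simply verifying the $1$-extension step under weaker hypotheses. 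Your final paragraph is right that the non-smooth case is where the difficulty lies, but the resolution in the literature is not merely a more careful version of the Euclidean $1$-extension argument; it is a genuinely different route.
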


\proof
Part (a) is due to Pollaczek-Geiringer \cite{pollaczek27} and was later proved independently by Laman \cite{laman}.
Part (b) is proved in  \cite{dew2} (see also \cite{polyhedra,kit-pow}).
\endproof

 \begin{theorem}[Nash-Williams \cite{nashwill}]\label{thm:nashwill}
     A graph is $(k,k)$-tight if and only if it can be expressed as a union of $k$ edge-disjoint spanning trees.
 \end{theorem}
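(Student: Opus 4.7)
The plan is to prove the two implications separately. For the forward direction, suppose $G = T_1 \cup \cdots \cup T_k$ is the edge-disjoint union of $k$ spanning trees. I would count edges directly: $|E| = \sum_{i=1}^k |E(T_i)| = k(|V|-1) = k|V| - k$. For $(k,k)$-sparsity, let $H$ be any subgraph of $G$ with at least one edge; restricting each $T_i$ to the edges with both endpoints in $V(H)$ yields a subforest on $V(H)$, which has at most $|V(H)|-1$ edges. Summing over $i$ gives $|E(H)| \leq k(|V(H)|-1) = k|V(H)| - k$, as required.

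For the harder reverse direction, suppose $G$ is $(k,k)$-tight. The plan is to invoke matroid union for $k$ copies of the graphic (cycle) matroid $\mathcal{M}(G)$. By Edmonds' matroid union theorem, a subset $I \subseteq E$ is independent in the $k$-fold union of $\mathcal{M}(G)$ with itself if and only if $I$ can be partitioned into $k$ forests of $G$. Equivalently, and more directly, one may use the Nash-Williams arboricity formula: the minimum number of forests needed to cover $E$ equals $\max_H \lceil |E(H)|/(|V(H)|-1) \rceil$, where the maximum is taken over all subgraphs $H$ of $G$ with at least one edge. Under the $(k,k)$-sparsity hypothesis we have $|E(H)|/(|V(H)|-1) \leq k$ for every such $H$, so this maximum is at most $k$ and hence $E$ partitions as $E = F_1 \sqcup \cdots \sqcup F_k$ with each $F_i$ a forest of $G$. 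Since $\sum_{i=1}^k |F_i| = |E| = k(|V|-1)$ while $|F_i| \leq |V|-1$ for every $i$, equality is forced throughout, so each $F_i$ has exactly $|V|-1$ edges and is therefore a spanning tree.

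The main obstacle is of course the matroid union / arboricity result itself, which carries the genuine combinatorial content of the theorem; everything else in the reverse direction reduces to bookkeeping of edge counts against tree-size constraints. If one wishes to avoid appealing to matroid union as a black box, a self-contained alternative is a direct exchange argument: take a partition of $E$ into $k$ edge-disjoint forests that maximises the total number of covered edges, and show that if this total is strictly smaller than $|E|$ then a suitable augmenting modification, using any uncovered edge together with swaps along cycles formed in the $F_i$'s, produces a partition covering strictly more edges, with the $(k,k)$-sparsity hypothesis used to guarantee the swap can be carried through without creating a new cycle. This essentially re-derives the relevant special case of matroid union from scratch, and is close in spirit to Nash-Williams' original approach.
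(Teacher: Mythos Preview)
Your argument is correct in both directions. The easy direction is clean, and for the hard direction your appeal to the Nash--Williams arboricity formula (or equivalently to Edmonds' matroid union theorem applied to $k$ copies of the cycle matroid) is a standard and valid route: $(k,k)$-sparsity gives $|E(H)|/(|V(H)|-1)\le k$ for every nontrivial subgraph $H$, so the arboricity is at most $k$, and the tightness count $|E|=k(|V|-1)$ then forces each of the $k$ forests in the partition to be a spanning tree.

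As for comparison with the paper: the paper does not prove this statement at all. Theorem~\ref{thm:nashwill} is stated with attribution to Nash--Williams~\cite{nashwill} and is used as a black box in the proof of Theorem~\ref{mainthm}(b). So your write-up supplies strictly more than the paper does. One small caveat worth flagging in your exposition is that the arboricity formula is itself a theorem of Nash--Williams (1964), distinct from but closely related to the 1961 spanning-tree packing theorem cited here; if you want a genuinely independent proof you should either go through Edmonds' matroid union theorem directly, or flesh out the exchange/augmenting-path argument you sketch at the end, rather than lean on another Nash--Williams result of comparable depth.
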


We  now present our main combinatorial characterisations of rigid graphs for $3$-dimensional cylindrical normed spaces.
Part (a) of the following theorem gives an affirmative answer to Conjecture 59(a) in \cite{matrixnorm}. 

\begin{theorem}\label{mainthm}
    Let $X$ be a generic normed plane and let $G=(V,E)$ be a  graph.
    \begin{enumerate}[(a)]
        \item If $X$ is isometrically isomorphic to $\mathbb{R}^2$ then the following statements are equivalent.
        \begin{enumerate}[(i)]
            \item\label{maincor1item1} $G$ is minimally rigid in the cylindrical normed space $X \oplus_\infty \mathbb{R}$.
            \item\label{maincor1item2} $G$ is an edge disjoint union of spanning subgraphs $H$ and $T$,
            where $H$ is $(2,3)$-tight and $T$ is a tree.
        \end{enumerate}
        \item If $X$ is not isometrically isomorphic to $\mathbb{R}^2$ then the following statements are equivalent.
        \begin{enumerate}[(i)]
            \item\label{maincor1item3} $G$ is minimally rigid in the cylindrical normed space $X \oplus_\infty \mathbb{R}$.
            \item $G$ is an edge disjoint union of three spanning trees.
            \item\label{maincor1item4} $G$ is $(3,3)$-tight.
        \end{enumerate}
    \end{enumerate}
\end{theorem}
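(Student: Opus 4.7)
The plan is to reduce both parts to the already-established product decomposition of Theorem \ref{graphthm}, then to invoke the planar rigidity characterisations of Theorem \ref{t:2d} and the tree-packing theorem of Nash-Williams (Theorem \ref{thm:nashwill}).

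For part (a), I would take $X=\mathbb{R}^2$, which coincides with the generic space $\ell_2^2$. Applying Theorem \ref{graphthm} directly, $G$ is minimally rigid in $\mathbb{R}^2\oplus_\infty\mathbb{R}$ if and only if $G$ admits an edge-disjoint spanning decomposition $G=H\cup T$ with $H$ minimally rigid in $\mathbb{R}^2$ and $T$ a tree. Theorem \ref{t:2d}(a) then translates the condition ``$H$ is minimally rigid in $\mathbb{R}^2$'' into ``$H$ is $(2,3)$-tight,'' which is exactly the statement claimed in (a).

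For part (b), the same application of Theorem \ref{graphthm} yields that $G$ is minimally rigid in $X\oplus_\infty\mathbb{R}$ iff $G=H\cup T$ edge-disjointly with $H$ minimally rigid in $X$ and $T$ a spanning tree. Since $X$ is a generic normed plane not isometrically isomorphic to $\mathbb{R}^2$, Theorem \ref{t:2d}(b) converts this to ``$H$ is $(2,2)$-tight.'' By Nash-Williams (Theorem \ref{thm:nashwill}) with $k=2$, a graph is $(2,2)$-tight iff it decomposes into two edge-disjoint spanning trees; combining with $T$ establishes the equivalence of (i) with ``$G$ is a union of three edge-disjoint spanning trees.'' A second application of Nash-Williams with $k=3$ identifies this decomposition property with $G$ being $(3,3)$-tight, closing the cycle of equivalences.

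The argument is essentially a bookkeeping exercise once Theorem \ref{graphthm} is in hand, so no substantive obstacle arises at this stage; the work was already carried out in the proofs of Theorems \ref{graphthm} and \ref{t:2d} and in Nash-Williams. The only point worth double-checking is the direction from $(3,3)$-tightness back to minimal rigidity: given three edge-disjoint spanning trees $T_1,T_2,T_3$ of $G$, set $T:=T_3$ and $H:=T_1\cup T_2$, so $H$ is $(2,2)$-tight and hence minimally rigid in $X$ by Theorem \ref{t:2d}(b), after which the converse direction of Theorem \ref{graphthm} supplies the required framework.
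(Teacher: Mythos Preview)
Your proposal is correct and follows essentially the same route as the paper: reduce to Theorem \ref{graphthm}, translate ``minimally rigid in $X$'' via Theorem \ref{t:2d} into the appropriate $(2,k)$-tightness condition, and in part (b) invoke Nash-Williams (Theorem \ref{thm:nashwill}) twice, once with $k=2$ and once with $k=3$. Your explicit check of the direction from $(3,3)$-tightness back to minimal rigidity is a helpful clarification but introduces nothing new.
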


\begin{proof}
Part $(a)$ follows from Theorem \ref{graphthm} and Theorem \ref{t:2d}(a).
For part $(b)$, note that by Theorem \ref{t:2d}(b)
    the minimally rigid graphs in $X$ are precisely the $(2,2)$-tight graphs. By Theorem \ref{thm:nashwill}, 
    a graph is $(2,2)$-tight if and only if it is an edge-disjoint union of two spanning trees. Thus the equivalence of $(i)$ and $(ii)$ follows from Theorem \ref{graphthm}. The equivalence of $(ii)$ and $(iii)$ is a special case of Theorem \ref{thm:nashwill}
    \end{proof}

\begin{remark}\label{r:3,4}
    It is evident from Theorem \ref{mainthm}(a) that minimally rigid graphs in $\mathbb{R}^2\oplus_\infty\mathbb{R}$ must be $(3,4)$-tight. In \cite{matrixnorm} it was observed that the converse statement is not true; see Figure \ref{fig:34eg} for example. By a result of Frank and Szeg\"{o} (see \cite[Theorem 1.9]{frankszego03}), 
    every $(3,4)$-tight graph can be constructed from a pair of parallel edges by a sequence of 3-dimensional multigraph 0-, 1- and 2-extensions in which no triples of parallel edges arise.  
    Similarly, if $X$ is a generic normed plane which is not isometrically isomorphic to $\mathbb{R}^2$ then, by Theorem \ref{mainthm}(b), the minimally rigid graphs in $X \oplus_\infty \mathbb{R}$ are $(3,3)$-tight. By \cite[Theorem 1.8]{frankszego03}, every $(3,3)$-tight graph can be constructed from a triple of parallel edges by a sequence of 3-dimensional multigraph  0-, 1- and 2-extensions.
    Note that these constructions  do not guarantee that every graph in the sequence is itself a minimally rigid graph (or indeed a simple graph). 
\end{remark}

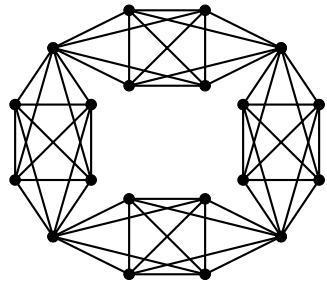
\begin{figure}[ht]
 \centering
 \begin{tabular}{  l  r  }

 \begin{tikzpicture}[scale=0.5]
  \clip (-6.25,-6.5) rectangle (8.25cm,1.5cm); 
  \coordinate (A1) at (-1,-1);
  \coordinate (A2) at (1,-1);
  \coordinate (A3) at (1,1);
  \coordinate (A4) at (-1,1);
  \coordinate (A5) at (-3,0);
  \coordinate (A6) at (3,0);
	
  \draw[thick] (A1) -- (A5) -- (A4) -- (A2) -- (A3) -- (A1) -- (A4) -- (A3) -- (A6) --
	(A2) -- (A1);
	\draw[thick] (A2) -- (A5) -- (A3);
	\draw[thick] (A4) -- (A6) -- (A1);
  
  \node[draw,circle,inner sep=1.4pt,fill] at (A1) {};
  \node[draw,circle,inner sep=1.4pt,fill] at (A2) {};
  \node[draw,circle,inner sep=1.4pt,fill] at (A3) {};
  \node[draw,circle,inner sep=1.4pt,fill] at (A4) {};
  \node[draw,circle,inner sep=1.4pt,fill] at (A5) {};
  \node[draw,circle,inner sep=1.4pt,fill] at (A6) {};
	
	\coordinate (B1) at (-4,-3.5);
  \coordinate (B2) at (-2,-3.5);
  \coordinate (B3) at (-2,-1.5);
  \coordinate (B4) at (-4,-1.5);
  \coordinate (B5) at (-3,0);
  \coordinate (B6) at (-3,-5);
	
  \draw[thick] (B1) -- (B5) -- (B4) -- (B2) -- (B3) -- (B1) -- (B4) -- (B3) -- (B6) --
	(B2) -- (B1);
	\draw[thick] (B2) -- (B5) -- (B3);
	\draw[thick] (B4) -- (B6) -- (B1);
  
  \node[draw,circle,inner sep=1.4pt,fill] at (B1) {};
  \node[draw,circle,inner sep=1.4pt,fill] at (B2) {};
  \node[draw,circle,inner sep=1.4pt,fill] at (B3) {};
  \node[draw,circle,inner sep=1.4pt,fill] at (B4) {};
  \node[draw,circle,inner sep=1.4pt,fill] at (B5) {};
  \node[draw,circle,inner sep=1.4pt,fill] at (B6) {};
	
	\coordinate (C1) at (-1,-6);
  \coordinate (C2) at (1,-6);
  \coordinate (C3) at (1,-4);
  \coordinate (C4) at (-1,-4);
  \coordinate (C5) at (-3,-5);
  \coordinate (C6) at (3,-5);
	
  \draw[thick] (C1) -- (C5) -- (C4) -- (C2) -- (C3) -- (C1) -- (C4) -- (C3) -- (C6) --
	(C2) -- (C1);
	\draw[thick] (C2) -- (C5) -- (C3);
	\draw[thick] (C4) -- (C6) -- (C1);
  
  \node[draw,circle,inner sep=1.4pt,fill] at (C1) {};
  \node[draw,circle,inner sep=1.4pt,fill] at (C2) {};
  \node[draw,circle,inner sep=1.4pt,fill] at (C3) {};
  \node[draw,circle,inner sep=1.4pt,fill] at (C4) {};
  \node[draw,circle,inner sep=1.4pt,fill] at (C5) {};
  \node[draw,circle,inner sep=1.4pt,fill] at (C6) {};
	
	\coordinate (D1) at (4,-3.5);
  \coordinate (D2) at (2,-3.5);
  \coordinate (D3) at (2,-1.5);
  \coordinate (D4) at (4,-1.5);
  \coordinate (D5) at (3,0);
  \coordinate (D6) at (3,-5);
	
  \draw[thick] (D1) -- (D5) -- (D4) -- (D2) -- (D3) -- (D1) -- (D4) -- (D3) -- (D6) --
	(D2) -- (D1);
	\draw[thick] (D2) -- (D5) -- (D3);
	\draw[thick] (D4) -- (D6) -- (D1);
  
  \node[draw,circle,inner sep=1.4pt,fill] at (D1) {};
  \node[draw,circle,inner sep=1.4pt,fill] at (D2) {};
  \node[draw,circle,inner sep=1.4pt,fill] at (D3) {};
  \node[draw,circle,inner sep=1.4pt,fill] at (D4) {};
  \node[draw,circle,inner sep=1.4pt,fill] at (D5) {};
  \node[draw,circle,inner sep=1.4pt,fill] at (D6) {};
  \end{tikzpicture}

\end{tabular}
\caption{A $(3,4)$-tight graph $G$ which cannot be decomposed into a spanning $(2,3)$-tight subgraph and a spanning tree. In particular, $G$ is not minimally rigid in $\mathbb{R}^2\oplus_\infty\mathbb{R}$.
The graph $G$ has an edge-connectivity of 5.}
\label{fig:34eg}
\end{figure}

\subsection{3-dimensional graph operations}

For a graph $G=(V,E)$,
we define the following operations for any positive integer $d$.
\begin{enumerate}[(i)]
    \item A \emph{$d$-dimensional 0-extension} of $G$ is a graph $G'$ obtained from $G$ by adjoining a new vertex which is adjacent to $d$ distinct vertices of $G$.
    \item  A \emph{$d$-dimensional 1-extension} of $G$ is a graph $G'$ obtained from $G$ by deleting an edge $xy$  and adjoining a new vertex  which is adjacent to $x$, $y$ and $d-1$ other vertices in $G$.
    \item Let $w$ be a vertex of $G$ with neighbourhood $N_G(w)$ containing at least $d-1$ vertices labelled $v_1,\ldots, v_{d-1}$.
    A \emph{$d$-dimensional vertex-split} of $G$ is a graph $G'$ obtained from $G$ by adjoining a new vertex $w'$ which is adjacent to $w$ and $v_1,\ldots,v_{d-1}$
    and, for every $v \in N_G(w) \setminus \{v_1,\ldots,v_{d-1}\}$,  either leaving the edge $wv$ unchanged or replacing it with $w'v$.
\end{enumerate}
In what follows we will only require the cases $d=2$ and $d=3$.
See Figure \ref{fig:extmoves} for illustrations of these operations.

\begin{figure}[ht]
 \centering
\begin{tikzpicture}[scale=0.7]
\begin{scope}[shift={(0,3)},scale=1]
  \coordinate (A1) at (-0.5,0);
  \coordinate (A2) at (0.5,0);

  \draw[thick] (0,0) ellipse (1.25 and 0.75);

  \draw[-latex,thick] (1.5,0) -- (2.5,0);
  
  \coordinate (B1) at (3.5,0);
  \coordinate (B2) at (4.5,0);
  \coordinate (B3) at (4,1.5);

  \draw[thick] (4,0) ellipse (1.25 and 0.75);
  
  \draw[thick] (B1) -- (B3) -- (B2);
  
  \node[draw,circle,inner sep=1pt,fill] at (A1) {};
  \node[draw,circle,inner sep=1pt,fill] at (A2) {};
  
  \node[draw,circle,inner sep=1pt,fill] at (B1) {};
  \node[draw,circle,inner sep=1pt,fill] at (B2) {};
  \node[draw,circle,inner sep=1pt,fill] at (B3) {};
\end{scope}

\begin{scope}[shift={(8,3)},scale=1]
  \coordinate (A1) at (-0.5,0);
  \coordinate (A2) at (0.5,0);
  \coordinate (A3) at (0,-0.25);

  \draw[thick] (0,0) ellipse (1.25 and 0.75);

  \draw[-latex,thick] (1.5,0) -- (2.5,0);
  
  \coordinate (B1) at (3.5,0);
  \coordinate (B2) at (4.5,0);
  \coordinate (B3) at (4,-0.25);
  \coordinate (B4) at (4,1.5);

  \draw[thick] (4,0) ellipse (1.25 and 0.75);
  
  \draw[thick] (A1) -- (A2);
  \draw[thick] (B1) -- (B4) -- (B2);
  \draw[thick] (B3) -- (B4);
  
  \node[draw,circle,inner sep=1pt,fill] at (A1) {};
  \node[draw,circle,inner sep=1pt,fill] at (A2) {};
  \node[draw,circle,inner sep=1pt,fill] at (A3) {};
  
  \node[draw,circle,inner sep=1pt,fill] at (B1) {};
  \node[draw,circle,inner sep=1pt,fill] at (B2) {};
  \node[draw,circle,inner sep=1pt,fill] at (B3) {};
  \node[draw,circle,inner sep=1pt,fill] at (B4) {};
\end{scope}

\begin{scope}[shift={(16,3)},scale=1]
  \coordinate (A1) at (-0.5,0);
  \coordinate (A2) at (0.5,0);

  \draw[thick] (0,0) ellipse (1.25 and 0.75);
  
  \draw[thick] (A1) -- (A2);

  \draw[thick] (A1) -- (-0.3,0.2);
  \draw[thick] (A1) -- (-0.3,-0.2);
  \draw[thick] (A1) -- (-0.7,0.2);
  \draw[thick] (A1) -- (-0.7,-0.2);

  \draw[-latex,thick] (1.5,0) -- (2.5,0);

  \coordinate (B1) at (3.5,0.2);
  \coordinate (B2) at (4.5,0);
  \coordinate (B3) at (3.5,-0.2);

  \draw[thick] (4,0) ellipse (1.25 and 0.75);

  \draw[thick] (B1) -- (B2);
  \draw[thick] (B2) -- (B3);
  \draw[thick] (B3) -- (B1);

  \draw[thick] (B1) -- (3.3,0.4);
  \draw[thick] (B1) -- (3.7,0.4);
  \draw[thick] (B3) -- (3.3,-0.4);
  \draw[thick] (B3) -- (3.7,-0.4);
  
  \node[draw,circle,inner sep=1pt,fill] at (A1) {};
  \node[draw,circle,inner sep=1pt,fill] at (A2) {};
  
  \node[draw,circle,inner sep=1pt,fill] at (B1) {};
  \node[draw,circle,inner sep=1pt,fill] at (B2) {};
  \node[draw,circle,inner sep=1pt,fill] at (B3) {};
\end{scope}

\begin{scope}[shift={(0,0)},scale=1]
  \coordinate (A1) at (-0.6,0);
  \coordinate (A2) at (0.6,0);
  \coordinate (A3) at (0,0);

  \draw[thick] (0,0) ellipse (1.25 and 0.75);

  \draw[-latex,thick] (1.5,0) -- (2.5,0);
  
  \coordinate (B1) at (3.4,0);
  \coordinate (B2) at (4.6,0);
  \coordinate (B3) at (4,0);
  \coordinate (B4) at (4,1.5);

  \draw[thick] (4,0) ellipse (1.25 and 0.75);
  
  \draw[thick] (B1) -- (B4) -- (B2);
  \draw[thick] (B3) -- (B4);
  
  \node[draw,circle,inner sep=1pt,fill] at (A1) {};
  \node[draw,circle,inner sep=1pt,fill] at (A2) {};
  \node[draw,circle,inner sep=1pt,fill] at (A3) {};
  
  \node[draw,circle,inner sep=1pt,fill] at (B1) {};
  \node[draw,circle,inner sep=1pt,fill] at (B2) {};
  \node[draw,circle,inner sep=1pt,fill] at (B3) {};
  \node[draw,circle,inner sep=1pt,fill] at (B4) {};
\end{scope}

\begin{scope}[shift={(8,0)},scale=1]
  \coordinate (A1) at (-0.6,0);
  \coordinate (A2) at (0.6,0);
  \coordinate (A3) at (0.2,-0.25);
  \coordinate (A4) at (-0.2,-0.25);

  \draw[thick] (0,0) ellipse (1.25 and 0.75);

  \draw[-latex,thick] (1.5,0) -- (2.5,0);
  
  \coordinate (B1) at (3.4,0);
  \coordinate (B2) at (4.6,0);
  \coordinate (B3) at (4.2,-0.25);
  \coordinate (B4) at (3.8,-0.25);
  \coordinate (B5) at (4,1.5);

  \draw[thick] (4,0) ellipse (1.25 and 0.75);
  
  \draw[thick] (A1) -- (A2);
  \draw[thick] (B1) -- (B5) -- (B2);
  \draw[thick] (B3) -- (B5);
  \draw[thick] (B4) -- (B5);
  
  \node[draw,circle,inner sep=1pt,fill] at (A1) {};
  \node[draw,circle,inner sep=1pt,fill] at (A2) {};
  \node[draw,circle,inner sep=1pt,fill] at (A3) {};
  \node[draw,circle,inner sep=1pt,fill] at (A4) {};
  
  \node[draw,circle,inner sep=1pt,fill] at (B1) {};
  \node[draw,circle,inner sep=1pt,fill] at (B2) {};
  \node[draw,circle,inner sep=1pt,fill] at (B3) {};
  \node[draw,circle,inner sep=1pt,fill] at (B4) {};
  \node[draw,circle,inner sep=1pt,fill] at (B5) {};
\end{scope}

\begin{scope}[shift={(16,0)},scale=1]
  \coordinate (A1) at (-0.6,0);
  \coordinate (A2) at (0,0);
  \coordinate (A3) at (0.6,0);

  \draw[thick] (0,0) ellipse (1.25 and 0.75);
  
  \draw[thick] (A1) -- (A2) -- (A3);

  \draw[thick] (A2) -- (0.2,0.2);
  \draw[thick] (A2) -- (0.2,-0.2);
  \draw[thick] (A2) -- (-0.2,0.2);
  \draw[thick] (A2) -- (-0.2,-0.2);

  \draw[-latex,thick] (1.5,0) -- (2.5,0);

  \coordinate (B1) at (3.4,0);
  \coordinate (B2) at (4,0.2);
  \coordinate (B3) at (4.6,0);
  \coordinate (B4) at (4,-0.2);

  \draw[thick] (4,0) ellipse (1.25 and 0.75);

  \draw[thick] (B1) -- (B2);
  \draw[thick] (B2) -- (B3);
  \draw[thick] (B1) -- (B4);
  \draw[thick] (B3) -- (B4);
  \draw[thick] (B2) -- (B4);

  \draw[thick] (B2) -- (3.8,0.4);
  \draw[thick] (B2) -- (4.2,0.4);
  \draw[thick] (B4) -- (3.8,-0.4);
  \draw[thick] (B4) -- (4.2,-0.4);
  
  \node[draw,circle,inner sep=1pt,fill] at (A1) {};
  \node[draw,circle,inner sep=1pt,fill] at (A2) {};
  \node[draw,circle,inner sep=1pt,fill] at (A3) {};
  
  \node[draw,circle,inner sep=1pt,fill] at (B1) {};
  \node[draw,circle,inner sep=1pt,fill] at (B2) {};
  \node[draw,circle,inner sep=1pt,fill] at (B3) {};
  \node[draw,circle,inner sep=1pt,fill] at (B4) {};
\end{scope}
\end{tikzpicture}
\caption{
(Top row) From left to right: a 2-dimensional 0-extension, a 2-dimensional 1-extension, a 2-dimensional vertex splitting operation.
(Bottom row) From left to right: a 3-dimensional 0-extension, a 3-dimensional 1-extension, a 3-dimensional vertex split.
}
\label{fig:extmoves}
\end{figure}
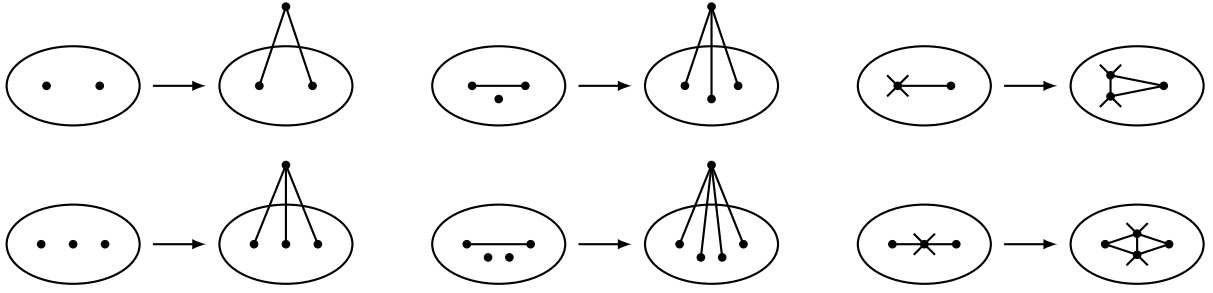

In \cite[\S4]{DewKitNix} it is shown that each of the $d$-dimensional operations described above preserves both independence and rigidity in any $d$-dimensional strictly convex and smooth normed space. The proofs presented in \cite{DewKitNix} are geometric and not applicable to cylindrical normed spaces. However, using the results of the previous subsection, we can now present entirely combinatorial arguments which prove analogous statements for  $3$-dimensional cylindrical normed spaces.

We will require the following well-known and easily verifiable lemma.

\begin{lemma}\label{l:2dgraphops}
    Let $G=(V,E)$ be a $(2,k)$-sparse (respectively, $(2,k)$-tight) graph where $k \in \{2,3\}$ and $|V| \geq 3$.
    Suppose a graph $G'$ is obtained from $G$ by either: 
    \begin{enumerate}[(a)]
    \item a 2-dimensional 0-extension, 
    \item a 2-dimensional 1-extension, or, 
    \item a 2-dimensional vertex split. 
    \end{enumerate}
    Then $G'$ is $(2,k)$-sparse (respectively, $(2,k)$-tight) also.
\end{lemma}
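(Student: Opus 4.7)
The plan is standard matroid-theoretic verification: first show the edge count is preserved, then check sparsity by an inheritance argument against subgraphs of the original graph $G$. Throughout, I write $k \in \{2,3\}$ and let $G'$ be obtained from $G$ by the operation in question. In each case, a direct arithmetic check gives $|V(G')| = |V(G)|+1$ and $|E(G')| = |E(G)|+2$, so if $G$ satisfies $|E| = 2|V|-k$ then $G'$ does too. It therefore suffices to prove that $(2,k)$-sparsity is preserved.

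For the $2$-dimensional $0$-extension, a subgraph $H' \subseteq G'$ either avoids the new vertex $u$ (in which case $H'$ is a subgraph of $G$) or contains $u$. In the latter case, let $e_u \in \{0,1,2\}$ be the number of edges of $H'$ incident to $u$. Then $H' - u$ is a subgraph of $G$, so $|E(H'-u)| \le 2|V(H'-u)| - k$ when it has an edge, and combining with $|E(H')| = |E(H'-u)| + e_u$ and $|V(H')| = |V(H'-u)|+1$ yields $|E(H')| \le 2|V(H')| - k$. The edge-free subcases ($|V(H')| \le 2$) are handled directly. For the $2$-dimensional $1$-extension, which deletes $xy$ and adds a vertex $u$ adjacent to $x$, $y$, and one further vertex $v_1$, the only delicate case is when $H'$ contains $u$ with $e_u = 3$; here $H' - u$ is a subgraph of $G$ not containing $xy$, so $(H'-u) + xy$ is also a subgraph of $G$, and $(2,k)$-sparsity of $G$ applied to this subgraph gives $|E(H'-u)| + 1 \le 2(|V(H')|-1) - k$, which rearranges to $|E(H')| \le 2|V(H')| - k$. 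The cases $e_u \in \{0,1,2\}$ are easier and analogous to the $0$-extension.

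The main obstacle is the vertex split, because the "contraction" of $w'$ back onto $w$ may create both a loop ($ww'$) and a parallel edge (at $v_1$, since both $wv_1$ and $w'v_1$ may lie in $G'$), and we must be careful not to lose these in the count. Given $H' \subseteq G'$, the cases $w' \notin V(H')$ and $\{w' \in V(H'),\, w \notin V(H')\}$ reduce immediately to subgraphs of $G$, by either noting that $G' - w'$ embeds into $G$, or by renaming $w'$ to $w$ and observing that every edge incident to $w'$ in $G'$ corresponds to an edge incident to $w$ in $G$. The substantive case is $w, w' \in V(H')$: define $H''$ to be the simple graph on $V(H') \setminus \{w'\}$ obtained from $H'$ by identifying $w'$ with $w$, discarding the loop $ww'$ (if present) and one copy of the edge $wv_1$ (if both $wv_1$ and $w'v_1$ lie in $H'$). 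Setting $e_{ww'} \in \{0,1\}$ and $P \in \{0,1\}$ to record these discards, we have $|V(H'')| = |V(H')| - 1$ and $|E(H'')| = |E(H')| - e_{ww'} - P$. Since $H''$ is a simple subgraph of $G$, $(2,k)$-sparsity gives $|E(H'')| \le 2|V(H'')| - k$ when it has an edge, whence
\[|E(H')| \le 2|V(H')| - 2 - k + e_{ww'} + P \le 2|V(H')| - k,\]
using $e_{ww'} + P \le 2$. The boundary subcases where $H''$ has no edges (so $|E(H')| \le 2$) are checked by hand against $|V(H')| \ge 2$ or $|V(H')| \ge 3$.

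Combining these three case analyses gives the $(2,k)$-sparse statement, and the tight statement then follows from the arithmetic check on $|E(G')|$ and $|V(G')|$ noted at the outset.
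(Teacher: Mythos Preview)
The paper does not actually give a proof of this lemma; it is stated as ``well-known and easily verifiable'' and left to the reader. Your direct counting-and-inheritance argument is correct and is exactly the kind of verification the paper has in mind: the edge/vertex arithmetic handles tightness, and for sparsity you correctly reduce an arbitrary subgraph $H'\subseteq G'$ to a subgraph of $G$ by deleting or contracting the new vertex, with the vertex-split case handled by tracking the at most two edges lost (the loop from $ww'$ and the possible parallel at $v_1$) so that $e_{ww'}+P\le 2$ exactly compensates the drop of $2$ in $2|V(H')|$. The boundary subcases are fine as stated. There is nothing to compare against in the paper, so your proof simply fills the gap the authors left.
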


\begin{proposition}\label{p:2d0ext}
    Let $X$ be a generic normed plane.
    Then 3-dimensional 0-extensions preserve independence and rigidity in the cylindrical normed space $X \oplus_\infty \mathbb{R}$.
\end{proposition}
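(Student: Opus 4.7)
The plan is to reduce the statement to Theorem \ref{graphthm} (the edge-disjoint decomposition characterisation of rigidity in $X\oplus_\infty\mathbb{R}$) combined with Lemma \ref{l:2dgraphops}, which ensures that $2$-dimensional $0$-extensions preserve $(2,k)$-sparsity and $(2,k)$-tightness for $k\in\{2,3\}$. Let $G'$ be obtained from $G=(V,E)$ by adjoining a new vertex $v$ adjacent to three chosen vertices $v_1,v_2,v_3 \in V$, and assume $G$ is independent (respectively, rigid) in $X\oplus_\infty\mathbb{R}$. Applying Theorem \ref{graphthm} will give a partition $E(G)=E(H)\sqcup E(T)$ into spanning subgraphs of $G$, where $H$ is independent (resp., rigid) in $X$ and $T$ is a spanning forest (resp., spanning connected subgraph).

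The next step will be to split the three new edges across this partition: place two of them, say $vv_1$ and $vv_2$, into $H$ and the remaining edge $vv_3$ into $T$, producing spanning subgraphs $H'$ and $T'$ of $G'$ whose edge sets partition $E(G')$. The subgraph $H'$ is then precisely a $2$-dimensional $0$-extension of $H$ at $\{v_1,v_2\}$, so by Theorem \ref{t:2d} and Lemma \ref{l:2dgraphops} it remains $(2,k)$-sparse, and hence independent in $X$. The subgraph $T'$ is obtained from $T$ by adjoining the pendant vertex $v$ along the single edge $vv_3$, so it is still a forest (resp., still connected) as a spanning subgraph of $G'$. A second appeal to Theorem \ref{graphthm} then delivers the desired independence (resp., rigidity) of $G'$ in $X\oplus_\infty\mathbb{R}$.

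The only wrinkle is the rigidity case, where $H$ itself need not be minimally rigid in $X$, so Lemma \ref{l:2dgraphops} does not immediately apply to $H$. To handle this, the approach will be to first pass to a spanning minimally rigid subgraph $H_0\subseteq H$; by Theorem \ref{t:2d} such an $H_0$ is $(2,k)$-tight, so $H_0 + vv_1 + vv_2$ is again $(2,k)$-tight by Lemma \ref{l:2dgraphops} and hence minimally rigid in $X$. Since $H_0 + vv_1 + vv_2$ is then a spanning minimally rigid subgraph of $H'$, it follows that $H'$ is rigid in $X$, after which the argument proceeds as above. Beyond this small reduction the proof is essentially a routine combinatorial chase between the two characterisations, so no substantial obstacle is expected.
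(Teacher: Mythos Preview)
Your proposal is correct and follows essentially the same route as the paper: apply Theorem~\ref{graphthm} to decompose $G$, put two of the new edges into $H$ (yielding a $2$-dimensional $0$-extension) and the third into $T$, then invoke Theorem~\ref{t:2d}, Lemma~\ref{l:2dgraphops}, and Theorem~\ref{graphthm} again. The paper simply writes ``the analogous statement for rigidity is proved in a similar way'' where you spell out the reduction to a spanning minimally rigid $H_0\subseteq H$, but this is the same argument made explicit.
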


\begin{proof}
    Let $G$ be an independent graph in $X \oplus_\infty \mathbb{R}$.
    By Theorem \ref{graphthm},
    $G$ can be expressed as an edge disjoint union of spanning subgraphs $H$ and $T$,
    where $H$ is independent in $X$ and $T$ is a forest. 
    Let $G'$ be formed from $G$ by a $3$-dimensional 0-extension that adds a vertex $v_0$ and edges $v_0 v_1,v_0v_2,v_0v_3$.
    Define $H' = H + v_0 + \{v_0 v_1, v_0v_2\}$ and $T' = T + v_0 + v_0 v_{3}$. Note that $H'$ is a 2-dimensional 0-extension of $H$.
    Thus, $H'$ is independent in $X$ by Theorem \ref{t:2d} and Lemma \ref{l:2dgraphops}.
    As $T'$ is a forest and $G' = H' \cup T'$,
    it follows from Theorem \ref{graphthm} that $G'$ is independent in $X \oplus_\infty \mathbb{R}$.
    The analogous statement for rigidity is proved in a similar way.
\end{proof}

\begin{proposition}\label{p:2d1ext}
    Let $X$ be a generic normed plane.
    Then 3-dimensional 1-extensions preserve independence and rigidity in  the cylindrical normed space $X \oplus_\infty \mathbb{R}$.
\end{proposition}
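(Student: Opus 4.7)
My plan is to mirror the proof of Proposition \ref{p:2d0ext}: apply Theorem \ref{graphthm} to decompose $G = H \cup T$ and redistribute the edges of the $1$-extension $G' = G - xy + v_0 + \{v_0x, v_0y, v_0z\}$ to produce an analogous certificate $G' = H' \cup T'$.

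For independence, write $G = H \cup T$ with $H$ independent in $X$ and $T$ a spanning forest. If $xy \in E(H)$, set $H' = H - xy + v_0 + \{v_0x, v_0y\}$, which is a $2$-dimensional $1$-extension of $H$, and $T' = T + v_0 + v_0z$, which is a forest; Lemma \ref{l:2dgraphops} together with Theorem \ref{t:2d} gives that $H'$ is independent in $X$, and Theorem \ref{graphthm} concludes. If $xy \in E(T)$, set $H' = H + v_0 + v_0z$ (so that $v_0$ is adjoined to $H$ as a pendant) and $T' = T - xy + v_0 + \{v_0x, v_0y\}$. A short counting check shows $H'$ remains $(2,k)$-sparse, since any subgraph of $H'$ containing $v_0$ uses at most one edge incident to $v_0$; and $T'$ is a forest because $x$ and $y$ lie in different components of the forest $T - xy$, so adjoining $v_0$ with edges to both creates no cycle. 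Theorem \ref{graphthm} again concludes.

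The rigidity case $xy \in E(H)$ is immediate by the same construction, since a $2$-dimensional $1$-extension of an $X$-rigid graph is $X$-rigid. The obstacle is the case $xy \in E(T)$, where $H + v_0 + v_0z$ acquires a pendant and so is not rigid. I plan to circumvent this by first passing to a minimally rigid spanning subgraph $G_0 \subseteq G$, decomposing $G_0 = H_0 \cup T_0$ via Theorem \ref{graphthm}, and then modifying the decomposition before constructing the $1$-extension. Since $G_0$ is minimally rigid in a cylindrical normed space of dimension at least $3$, a standard sparsity count (using the $(3,k)$-tightness implied by Theorem \ref{mainthm} and Remark \ref{r:3,4}) rules out bridges in $G_0$; hence the cut $(V(T_x), V(T_y))$ induced by the tree edge $xy$ contains some other edge of $G_0$, necessarily in $E(H_0)$. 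I then examine the fundamental $2$-dimensional rigidity circuit $C \subseteq H_0 + xy$: because $(2,k)$-circuits are $2$-edge-connected, $C - xy$ is connected and contains an $x$-$y$ path, yielding some edge $f' \in H_0 \cap C$ crossing the cut. Matroid basis exchange then gives $H_0^* := H_0 - f' + xy$ still minimally rigid in $X$, while $T_0^* := T_0 - xy + f'$ is still a spanning tree. Now $xy \in E(H_0^*)$, so the earlier argument applies to produce a minimally rigid spanning subgraph of $G'$, whence $G'$ is rigid.

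The main obstacle is the exchange step: the swap must be valid simultaneously on both sides, i.e.\ one needs an $H_0$-edge that crosses the cut and also lies in the fundamental $2$-dimensional rigidity circuit through $xy$. The essential combinatorial input enabling this is the $2$-edge-connectivity of $(2,k)$-circuits for $k \in \{2,3\}$, which follows from a standard sparsity computation.
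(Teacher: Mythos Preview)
Your argument rests on a misreading of the definition of a $d$-dimensional $1$-extension. By the paper's definition, the new vertex $v_0$ is adjacent to $x$, $y$, and $d-1$ \emph{other} vertices, so $v_0$ has degree $d+1$. Thus the $3$-dimensional $1$-extension adds four edges $v_0x,v_0y,v_0v_1,v_0v_2$, not the three edges $v_0x,v_0y,v_0z$ you work with, and the $2$-dimensional $1$-extension adds three edges, not two. In particular, your $H'=H-xy+v_0+\{v_0x,v_0y\}$ is an edge subdivision, not a $2$-dimensional $1$-extension, so the appeal to Lemma~\ref{l:2dgraphops} is void. A quick sanity check exposes the problem: with your operation, a minimally rigid $G$ would yield a $G'$ with $|E(G')|=|E(G)|+2=3|V(G')|-5$ in the Euclidean case, one edge short of what Theorem~\ref{mainthm} requires; so the claimed preservation of rigidity is simply false for the operation you describe.

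Once the missing edge is restored, the difficult case evaporates. When $xy\in E(T)$ you now have \emph{two} spare edges $v_0v_1,v_0v_2$ to give to $H$, so $H'=H+v_0+\{v_0v_1,v_0v_2\}$ is a $2$-dimensional $0$-extension and $T'=T-xy+v_0+\{v_0x,v_0y\}$ is still a forest (respectively, tree); no pendant appears and the entire circuit/exchange machinery is unnecessary. For rigidity, the paper's route is shorter than yours: pass to a minimally rigid spanning subgraph $\tilde G$; if $xy\in E(\tilde G)$, run the independence argument on $\tilde G$; if $xy\notin E(\tilde G)$, then three of the four new edges already give a $3$-dimensional $0$-extension of $\tilde G$ inside $G'$, which is minimally rigid by Proposition~\ref{p:2d0ext}. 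Your exchange argument (which is essentially Lemma~\ref{l:colourswitch}) is correct in itself, but it is solving a problem that only arose because of the degree miscount.
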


\begin{proof}
    Let $G'$ be obtained from a graph $G$ by a $3$-dimensional 1-extension that adds a vertex $v_0$, deletes an edge $xy$ and adds edges $v_0 x, v_0 y$ and the edges $v_0 v_1, v_0 v_2$.
    
    Suppose $G$ is an independent graph in $X \oplus_\infty \mathbb{R}$.
    By Theorem \ref{graphthm},
    $G$ can be expressed as an edge disjoint union of spanning subgraphs $H$ and $T$,
    where $H$ is independent in $X$ and $T$ is a forest.
    If $xy \in E(T)$ then define $H' = H + v_0 + \{v_0 v_1, v_0 v_2\}$ and $T' = T + v_0 + \{v_0 x,v_0 y\}$.
    Note that $H'$ is a 2-dimensional 0-extension of $H$. 
    If  $xy \in E(H)$ then define $H' = H + v_0 + \{v_0 x,v_0 y, v_0 v_1\}$ and $T' = T + v_0 + \{v_0 v_2\}$.
    Note that $H'$ is a 2-dimensional 1-extension of $H$.  In either case, $H'$ is independent in $X$ (by Lemma \ref{l:2dgraphops} and Theorem \ref{t:2d}) and $T'$ is a forest.
    As $G' = H' \cup T'$,
    it follows from Theorem \ref{graphthm} that $G'$ is independent in $X \oplus_\infty \mathbb{R}$.
    
    Now suppose $G$ is a rigid graph in $X \oplus_\infty \mathbb{R}$.
    If $G$ is minimally rigid in $X \oplus_\infty \mathbb{R}$ then an argument analogous to the one above shows that $G'$ is also minimally rigid in $X \oplus_\infty \mathbb{R}$.    
    If $G$ is not minimally rigid in $X \oplus_\infty \mathbb{R}$ then choose a minimally rigid spanning subgraph $\tilde{G}$ of $G$. 
    If $xy\in E(\tilde{G})$ then the 3-dimensional 1-extension of $\tilde{G}$ at $xy$ that adds the vertex $v_0$ and edges $v_0 x$, $v_0 y$, $v_0 v_1$, $v_0 v_2$ is a spanning subgraph of $G'$ and is also minimally rigid in $X \oplus_\infty \mathbb{R}$.
      If $xy\notin E(\tilde{G})$ then the 3-dimensional 0-extension of $\tilde{G}$ that adds the vertex $v_0$ and edges $v_0 x$, $v_0 y$, $v_0 v_1$ is a spanning subgraph of $G'$ and is minimally rigid in $X \oplus_\infty \mathbb{R}$ by Proposition \ref{p:2d0ext}.
     Hence, in either case,  $G'$ is rigid in $X \oplus_\infty \mathbb{R}$.
\end{proof}

To prove our final result in this section we will require the following two lemmas.

\begin{lemma}\label{l:colourswitch}
    Let $X$ be a normed plane and let $G=(V,E)$ be a graph which is an edge-disjoint union of spanning subgraphs $H$ and $T$,
    where $H$ is independent in $X$ and $T$ is a tree.
    Suppose $H+e$ is not independent in $X$ for some edge $e$ of $T$.
    Then there exists an edge $f$ of $H$ such that $H-f +e$ is independent in $X$ and $T-e +f$ is a tree.
\end{lemma}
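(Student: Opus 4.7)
The plan is to recognise Lemma~\ref{l:colourswitch} as a matroid-exchange statement combined with a connectivity condition on the spanning tree $T$. By Theorem~\ref{t:2d}, independence in the normed plane $X$ coincides with $(2,k)$-sparsity, where $k=3$ when $X$ is isometric to $\mathbb{R}^2$ and $k=2$ otherwise. Writing $\mathcal{M}_k$ for the corresponding $(2,k)$-sparsity matroid on the edge set of the complete graph on $V$, the hypothesis that $H$ is $\mathcal{M}_k$-independent while $H+e$ is not yields a unique fundamental circuit $C\subseteq H+e$ containing $e$, and standard matroid circuit-exchange shows that $H-f+e$ is $\mathcal{M}_k$-independent, hence independent in $X$, for every edge $f\in C-e\subseteq E(H)$.

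The remaining task is purely combinatorial: deleting $e=xy$ from the spanning tree $T$ yields two subtrees on vertex sets $V_1\ni x$ and $V_2\ni y$, and I must locate some $f\in C-e$ with one endpoint in $V_1$ and the other in $V_2$, since any such $f$ makes $T-e+f$ into a spanning tree. My plan is to argue by contradiction, assuming that $V(C)$ splits as $V_C^1\sqcup V_C^2$ with $V_C^i=V(C)\cap V_i$ and with every edge of $C-e$ lying inside one of the two parts. Since $e\in C$, both sides $V_C^1$ and $V_C^2$ are nonempty.

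For the contradiction I will use the identity $|E(C)|=2|V(C)|-k+1$ for a $(2,k)$-sparsity circuit, together with the fact that $C-e$ is a proper subset of $C$ and hence $(2,k)$-sparse. Writing $a_i=|V_C^i|$ and $c_i$ for the number of edges of $C$ inside $V_C^i$, the partition assumption forces $c_1+c_2=|E(C)|-1=2|V(C)|-k$. If both $a_i\geq 2$, sparsity of $C-e$ on each side yields $c_i\leq 2a_i-k$, whose sum $2|V(C)|-2k$ contradicts the above when $k\geq 2$. If $a_1=1$ then $c_1=0$, and sparsity on $V_C^2$ gives $c_2\leq 2(|V(C)|-1)-k$, again contradicting the above; the case $a_2=1$ is symmetric. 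The main obstacle is conceptual rather than technical: the key realisation is that the lemma admits a two-step reduction, first using Theorem~\ref{t:2d} to replace the analytic notion of independence in $X$ by the combinatorial notion of $(2,k)$-sparsity, and then producing the swap edge $f$ from the fundamental circuit by a sparsity count rather than any geometric argument.
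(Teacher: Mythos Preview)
Your proof is correct and follows essentially the same approach as the paper: both reduce via Theorem~\ref{t:2d} to $(2,k)$-sparsity, identify the unique circuit $C$ (the paper's $H'$) in $H+e$, and use a sparsity count to produce the crossing edge $f$. The only cosmetic difference is that the paper phrases the last step as ``$H'-e$ is $(2,k)$-tight, hence connected'' (via the bound $|E|\le 2n-ck$ for a $(2,k)$-sparse graph with $c$ components), whereas you obtain the crossing edge directly by the partitioned count $c_1+c_2=2|V(C)|-k$ versus $c_i\le 2a_i-k$; these are the same argument in different clothing.
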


\begin{proof}
    By Theorem \ref{t:2d},
    there exists $k \in \{2,3\}$ such that a graph is independent in $X$ if and only if it is $(2,k)$-sparse.
    Thus, $H$ is $(2,k)$-sparse but $H+e$ is not.
    Let $H'$ be a minimal element (under inclusion) of the set of all subgraphs $J \subset H +e$ that are not $(2,k)$-sparse.
    Then $|E(H')|=2|V(H')|-k+1$ and, by minimality,  $H'-g$ is $(2,k)$-tight for every edge $g \in E(H')$.
    It is immediate that $H'$ must contain $e$ (since $H$ is $(2,k)$-sparse).
    Furthermore,
    the graph $H'$ is also the unique subgraph of $H+e$ with this property;
    indeed if another graph $H''$ had this property then it too would contain $e$ and the subgraph $(H' \cup H'') -e$ of $H$ would not be $(2,k)$-sparse.    
    Label the vertices of the two connected components of $T-e$ by $V_1,V_2$.
    Since $e$ connects $V_1$ and $V_2$,
    $H'$ contains vertices from both $V_1$ and $V_2$.
    As $H'-e$ is $(2,k)$-tight,
    it is connected;
    this follows from the observation that a $(2,k)$-sparse graph with $n$ vertices and $c$ connected components has at most $2n-ck$ edges.
    Hence there exists another edge $f \in E(H')-e$ which connects $V_1$ and $V_2$.
    Following from the uniqueness of $H'$,
    we have that $H-f +e$ is $(2,k)$-sparse and $T-e+f$ is connected.
    Since $|E(T-e+f)|=|E(T)|$, the graph $T-e+f$ is a tree also.
\end{proof}

Let $G=(V,E)$ be a graph with a vertex $w\in V$ and denote the neighbourhood of $w$ by $N_G(w)$.
Suppose $N_G(w)$ contains distinct vertices $v_1,v_2$.
A \emph{spider split} (or {\em vertex to 4-cycle operation}) of $G$ is a graph $G'$ obtained from $G$ by adjoining a new vertex $w'$ adjacent to $v_1,v_2$ and,
for every $v \in N_G(w) \setminus \{v_1,v_2\}$,
either leaving the edge $wv$ unchanged or replacing it with $w'v$.
See Figure \ref{fig:spider} for an illustration of this operation.

\begin{figure}[ht]
 \centering
\begin{tikzpicture}[scale=0.7]
  \coordinate (A1) at (-0.6,0);
  \coordinate (A2) at (0,0);
  \coordinate (A3) at (0.6,0);

  \draw[thick] (0,0) ellipse (1.25 and 0.75);
  
  \draw[thick] (A1) -- (A2) -- (A3);

  \draw[thick] (A2) -- (0.2,0.2);
  \draw[thick] (A2) -- (0.2,-0.2);
  \draw[thick] (A2) -- (-0.2,0.2);
  \draw[thick] (A2) -- (-0.2,-0.2);

  \draw[-latex,thick] (1.5,0) -- (2.5,0);

  \coordinate (B1) at (3.4,0);
  \coordinate (B2) at (4,0.2);
  \coordinate (B3) at (4.6,0);
  \coordinate (B4) at (4,-0.2);

  \draw[thick] (4,0) ellipse (1.25 and 0.75);

  \draw[thick] (B1) -- (B2);
  \draw[thick] (B2) -- (B3);
  \draw[thick] (B1) -- (B4);
  \draw[thick] (B3) -- (B4);

  \draw[thick] (B2) -- (3.8,0.4);
  \draw[thick] (B2) -- (4.2,0.4);
  \draw[thick] (B4) -- (3.8,-0.4);
  \draw[thick] (B4) -- (4.2,-0.4);
  
  \node[draw,circle,inner sep=1pt,fill] at (A1) {};
  \node[draw,circle,inner sep=1pt,fill] at (A2) {};
  \node[draw,circle,inner sep=1pt,fill] at (A3) {};
  
  \node[draw,circle,inner sep=1pt,fill] at (B1) {};
  \node[draw,circle,inner sep=1pt,fill] at (B2) {};
  \node[draw,circle,inner sep=1pt,fill] at (B3) {};
  \node[draw,circle,inner sep=1pt,fill] at (B4) {};
\end{tikzpicture}
\caption{
The spider split operation.
}
\label{fig:spider}
\end{figure}
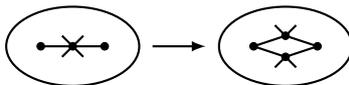

The following result is  folklore.

\begin{lemma}\label{l:extra2dgraphops}
    Let $G=(V,E)$ be a $(2,k)$-sparse (respectively, $(2,k)$-tight) graph where $k \in \{2,3\}$ and $|V| \geq 3$.
    Suppose $G'$ is obtained from $G$ by a spider split.
    Then $G'$ is $(2,k)$-sparse (respectively, $(2,k)$-tight) also.
\end{lemma}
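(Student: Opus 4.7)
The plan is to prove sparsity directly by reducing an arbitrary subgraph $H'\subseteq G'$ to a subgraph $H\subseteq G$ of comparable size, and then to derive preservation of tightness from preservation of sparsity via a one-line edge count.

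First I would observe that the spider split changes counts by $|V(G')|-|V(G)|=1$ and $|E(G')|-|E(G)|=2$: two new edges $w'v_1$ and $w'v_2$ are added, while each edge incident to a neighbour $u\in N_G(w)\setminus\{v_1,v_2\}$ is either left as $wu$ or relocated to $w'u$, contributing zero to the net edge change. Hence $|E(G)|=2|V(G)|-k$ holds if and only if $|E(G')|=2|V(G')|-k$, and it suffices to verify that every subgraph of $G'$ satisfies the $(2,k)$-sparsity bound.

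So fix $H'\subseteq G'$ with $|E(H')|\ge 1$. If $\{w,w'\}\not\subseteq V(H')$, then after relabelling $w'$ as $w$ in the case $w\notin V(H')$, every edge of $H'$ is an edge of $G$, so $H'$ corresponds to a subgraph of $G$ and the bound $|E(H')|\le 2|V(H')|-k$ is immediate from the hypothesis. Otherwise $w,w'\in V(H')$, and I would form $H$ from $H'$ by identifying $w$ with $w'$ and deleting duplicate edges. Because $G'$ is simple, the only edges that can duplicate under this identification are the pairs $\{wv_1,w'v_1\}$ and $\{wv_2,w'v_2\}$; in particular $|E(H)|\ge |E(H')|-2$ and $|V(H)|=|V(H')|-1$. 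Moreover $H$ embeds in $G$ (each edge $w'u$ corresponds to the original $G$-edge $wu$), so applying $(2,k)$-sparsity of $G$ gives
\[|E(H')|\le |E(H)|+2\le (2|V(H)|-k)+2=2|V(H')|-k,\]
as required. This completes both the sparse and the tight cases.

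The only delicate point, and the main potential obstacle, is the subcase in which both parallel pairs occur in $H'$: two edges are then lost in passing from $H'$ to $H$, but this loss is exactly offset by the $-2$ arising from $|V(H)|=|V(H')|-1$, so the bound is preserved with no slack. Simplicity of $G'$ is essential here, since it excludes further duplications at the vertices $u_j\in N_G(w)\setminus\{v_1,v_2\}$, where precisely one of the edges $wu_j,w'u_j$ lies in $G'$.
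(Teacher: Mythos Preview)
Your argument is correct. The paper states this lemma as folklore and gives no proof, so there is nothing to compare your approach against; the contraction argument you give (identify $w$ with $w'$ in any subgraph $H'$ containing both, and observe that at most the two pairs $\{wv_i,w'v_i\}$ can collapse) is the standard way to verify this.
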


\begin{proposition}\label{p:2dvertsplit}
    Let $X$ be a generic normed plane.
    Then 3-dimensional vertex splits preserve independence and rigidity in the cylindrical normed space $X \oplus_\infty \mathbb{R}$.
\end{proposition}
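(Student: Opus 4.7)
The plan is to first prove the independence statement via a case analysis on the decomposition of $G$ provided by Theorem \ref{graphthm}, and then deduce the rigidity statement by matroid augmentation. Suppose $G$ is independent in $X \oplus_\infty \mathbb{R}$, write $G = H \cup T$ with $H$ independent in $X$ and $T$ a spanning forest, and let $G'$ be the 3-dimensional vertex split of $G$ at $w$ with chosen neighbours $v_1, v_2$ and new vertex $w'$. I want to partition $E(G')$ as $E(H') \cup E(T')$ so that $H'$ is $(2,k)$-sparse (where $k \in \{2,3\}$ depends on $X$ through Theorem \ref{t:2d}) and $T'$ is a spanning forest; Theorem \ref{graphthm} will then deliver independence of $G'$.

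The case analysis is driven by where the edges $wv_1, wv_2$ live in the decomposition. When both are in $E(H)$, I would place $ww'$ together with the renamings of $T$-edges at $w$ into $T'$, and place $\{w'v_1, w'v_2\}$ together with the renamings of $H$-edges into $H'$. Then $T'$ is a spanning forest (the new vertex $w'$ is pendant to $w$ via $ww'$, and each renaming $wu \to w'u$ with $wu\in E(T)$ preserves the forest structure because $w$ and $w'$ are adjacent in $T'$), and $H'$ is precisely a spider split of $H$ at $w$ with chosen neighbours $v_1, v_2 \in N_H(w)$, so Lemma \ref{l:extra2dgraphops} yields $(2,k)$-sparsity. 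If instead exactly one of $wv_1, wv_2$ is in $E(H)$, say $wv_1$, I would place $w'v_2$ (with the $T$-renamings) into $T'$ and $\{ww', w'v_1\}$ (with the $H$-renamings) into $H'$; here $T'$ is again a forest ($w'$ pendant to $v_2$), and $H'$ is a 2-dimensional vertex split of $H$ at $w$ with chosen neighbour $v_1 \in N_H(w)$, handled by Lemma \ref{l:2dgraphops}.

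The main obstacle is the remaining case where $wv_1, wv_2 \in E(T)$, so that $v_1, v_2 \notin N_H(w)$ and neither a 2-dimensional vertex split nor a spider split of $H$ is directly available. My plan is to \emph{rotate} the decomposition so that one of $wv_1, wv_2$ lies in the $H$-part. If $H + wv_1$ is $(2,k)$-sparse, I would simply replace $(H, T)$ by $(H + wv_1, T - wv_1)$, which is a valid decomposition with $T - wv_1$ still a forest, and fall into the previous case. Otherwise I would mimic the proof of Lemma \ref{l:colourswitch}: let $H^* \subseteq H + wv_1$ be the unique minimal non-$(2,k)$-sparse subgraph; then $H^* - wv_1$ is $(2,k)$-tight and hence connected, so the path from $w$ to $v_1$ in $H^* - wv_1$ crosses the cut $(V_1, V_2)$ formed by removing $wv_1$ from the tree component of $T$ that contains it, providing an edge $f$ such that $H - f + wv_1$ is $(2,k)$-sparse and $T - wv_1 + f$ is a spanning forest. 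The argument is essentially the tree-case argument applied inside the relevant component of $T$.

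For the rigidity statement, the set $\{wv_1, wv_2\}$ is independent in the rigidity matroid of $X \oplus_\infty \mathbb{R}$ because it decomposes trivially as the empty $X$-independent graph together with a two-edge forest; so by matroid augmentation there is a minimally rigid spanning subgraph $\tilde{G} \subseteq G$ with $\{wv_1, wv_2\} \subseteq E(\tilde{G})$. Applying the 3-dimensional vertex split to $\tilde{G}$ with the renaming inherited from the split of $G$ (restricted to $E(\tilde{G})$) yields a spanning subgraph $\tilde{G}' \subseteq G'$ which is minimally rigid by the independence argument, and hence $G'$ is rigid.
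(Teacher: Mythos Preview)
Your argument is correct and follows essentially the same route as the paper: decompose $G$ via Theorem~\ref{graphthm}, case-split on which of $wv_1,wv_2$ lie in $H$ versus $T$, and realise $H'$ as a spider split (Lemma~\ref{l:extra2dgraphops}) or a 2-dimensional vertex split (Lemma~\ref{l:2dgraphops}) of $H$; when both edges lie in $T$, rotate the decomposition so that $wv_1\in E(H)$. The paper differs only in cosmetic ways: it first arranges for $T$ to be a tree so that Lemma~\ref{l:colourswitch} applies verbatim, whereas you re-run the colour-switch argument inside a forest; and for rigidity the paper exhibits a minimally rigid spanning subgraph containing $wv_1,wv_2$ by hand (pushing these edges into the tree part), while your matroid-augmentation shortcut is cleaner.

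Two small imprecisions are worth tightening. First, your parenthetical ``$w'$ pendant'' justifications for $T'$ being a forest are not literally true when some $T$-edges at $w$ are moved to $w'$; the correct reason is that splitting a vertex of a forest and then adding a single edge between the two new pieces (either $ww'$ or $w'v_2$) cannot create a cycle. Second, in the rotation step your cut $(V_1,V_2)$ should be taken as a bipartition of all of $V$ (with $V_1$ the component of $T-wv_1$ containing $w$), not just of the tree component of $T$ containing $wv_1$; otherwise the $w$--$v_1$ path in $H^*-wv_1$ may wander through other components and the edge $f$ you pick need not join your $V_1$ to your $V_2$, though it will still join two distinct components of $T-wv_1$, which is all you need.
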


\begin{proof}
    Let $G=(V,E)$ be an independent graph in $X \oplus_\infty \mathbb{R}$.
    Without loss of generality, we may suppose that $G$ is connected.
    By Theorem \ref{graphthm},
    $G$ can be expressed as an edge disjoint union of spanning subgraphs $H$ and $T$,
    where $H$ is independent in $X$ and $T$ is a forest.
    By removing edges from $H$ and adding to $T$,
    we may suppose that $T$ is a tree (note that this is possible since $G$ is connected).
    Let $w \in V$ be a vertex adjacent to at least two distinct vertices $v_1,v_2\in V$ 
    and let $S\subseteq N_G(w) \setminus \{v_1,v_2\}$.
    Let $G'$ be formed from $G$ by a 3-dimensional vertex split at the vertex $w$ that adds a vertex $w'$, adds the edges $ww',w'v_1, w'v_2$, and for each $v \in S$ replaces the edge $wv$ with $w'v$.
    
    If both $wv_1,wv_2$ are edges of $T$,
    we refactor $H$ and $T$ as follows:
    if $H + wv_1$ is independent in $X$ then replace $H$ with the independent graph $H+wv_1$ and $T$ with the forest $T-wv_1$ (it will not matter for the rest of the proof whether $T$ is connected);
    if $H + wv_1$ is not independent in $X$ then apply Lemma \ref{l:colourswitch} to obtain an edge $f$, replace $H$ with the independent graph $H+wv_1-f$ and replace $T$ with the tree $T-wv_1+f$.
    
    Up to relabelling $v_1,v_2$,
    there are now two cases to check:
    $(i)$ $wv_1,wv_2 \in E(H)$, and
    $(ii)$ $wv_1 \in E(H)$ and $wv_2 \in E(T)$.
    We first define $H'',T''$ to be the spanning subgraphs of $G'$ where
    \begin{align*}
        E(H'') &= \Big(E(H) + \{ w'v: v \in S,~ wv \in E(H) \}\Big) - \Big(\{wv : v \in S \} + \{wv_1,wv_2\}\Big)\\
        E(T'') &= \Big(E(T) + \{ w'v: v \in S,~ wv \in E(T) \}\Big) - \Big(\{wv : v \in S \} + \{wv_1,wv_2\}\Big).
    \end{align*}
    
    Case $(i)$: 
    First suppose $wv_1,wv_2 \in E(H)$.
    Define $H' = H'' +\{wv_1,wv_2,w'v_1,w'v_2\}$ and $T' = T'' + ww'$.
    Then $H'$ is a spider split of $H$ and $T'$ contains no cycles.
    By Lemma \ref{l:extra2dgraphops} and  Theorem \ref{t:2d},
    $H'$ is independent in $X$.
    Hence by Theorem \ref{graphthm},
    $G'$ is independent in $X \oplus_\infty \mathbb{R}$.
    
    Case $(ii)$: 
    Now suppose $wv_1 \in E(H)$ and $wv_2 \in E(T)$.
    Define $H' = H'' +\{wv_1,w'v_1,ww'\}$ and $T' = T''  +\{wv_2,w'v_2\}$.
    Then $H'$ is a 2-dimensional vertex split of $H$ and $T'$ contains no cycles.
    By Lemma \ref{l:2dgraphops} and Theorem \ref{t:2d},
    $H'$ is independent in $X$.
    Hence by Theorem \ref{graphthm},
    $G'$ is independent in $X \oplus_\infty \mathbb{R}$.
    
    
    
    Now suppose that $G$ is rigid in $X \oplus_\infty \mathbb{R}$. 
    By Theorem \ref{graphthm},
    $G$ contains edge-disjoint spanning subgraphs $H$ and $T$,
    where $H$ is minimally rigid in $X$ and $T$ is a tree.
    Define $G_0 := H \cup T$.
    If the edges $wv_1$ and $wv_2$ are contained in $G_0$ then let $G_0'$ be the subgraph of $G'$ that is obtained when the 3-dimensional vertex-split operation on $G$ is applied to the subgraph $G_0$.
    By applying the above arguments to $G_0$, $H$ and $T$,
    we obtain two edge-disjoint spanning subgraphs $H'$ and $T'$ of $G_0'$ where $H'$ is minimally rigid in $X$ and $T'$ is a tree.
    Thus $G_0'$, and hence also $G'$, is rigid in $X \oplus_\infty \mathbb{R}$ by Theorem \ref{graphthm}.
    Now suppose that at least one of the edges $wv_1, wv_2$ is not an edge in $G_0$,
    i.e., the set $F := \{wv_1, wv_2\} \setminus E(G_0)$ is non-empty.
    The graph $T + F$ contains a spanning tree $S$ that contains the edge set $F$.
    We now replace $T$ with $S$ in the definition of $G_0$ and apply the previous method.
\end{proof}

\section{Examples}
\label{S:Examples}
\subsection{Braced triangulations of the sphere}

In the following, we denote the complete graph on six vertices minus any edge by $K_6-e$,
and the graph obtained by gluing two copies of $K_5$ at three vertices by $K_5 \cup_{K_3} K_5$ (see Figure \ref{fig:k5glued}).
The following result is proved in \cite{CruKasKitSch}.

\begin{proposition}
\label{p:CruKasKitSch}
    Let $G$ be a simple graph formed from a triangulation of the 2-sphere plus two extra edges.
    Then $G$ can be constructed from either $K_6-e$ or $K_5 \cup_{K_3} K_5$ by a sequence of 3-dimensional vertex splitting operations.
\end{proposition}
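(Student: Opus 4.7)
The plan is to proceed by induction on $|V(G)|$. By Euler's formula, any triangulation of the $2$-sphere on $n$ vertices has exactly $3n-6$ edges, so a $2$-braced triangulation $G$ on $n$ vertices has $3n-4$ edges. For the base case, note that $n=6$ forces $G$ to be complete minus an edge (no room for multi-edges), giving $K_6-e$; for $n=7$, one enumerates the possible triangulations with two bracing edges and checks that the only one not reducible by a single reverse $3$-dimensional vertex split to $K_6-e$ is $K_5\cup_{K_3}K_5$.

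For the inductive step on $n\geq 8$, the aim is to locate an edge $ww'$ in $G$ whose contraction (the reverse of a $3$-dimensional vertex split, merging $w'$ into $w$) yields a simple graph $G_0$ that is again a $2$-braced triangulation on $n-1$ vertices. Since any two adjacent vertices in a triangulation of the sphere share exactly two common neighbours (the apexes of the two triangular faces on either side of the edge), the condition that $w$ and $w'$ share at least $d-1=2$ common neighbours required by the $3$-dimensional vertex split is automatically satisfied. Euler's formula gives average degree $6-8/n<6$, so there exists a vertex $w'$ of degree $3$, $4$, or $5$; the task is to choose such a $w'$ and a neighbour $w$ whose contraction produces a simple graph.

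The main obstacle is that naively contracting $ww'$ can create a multi-edge: this happens precisely when $w$ and $w'$ have a common neighbour $u$ not already sitting in one of the two triangles on the edge $ww'$, equivalently when $uw$ and $uw'$ are among the two extra bracing edges (or create an extra triangular connection outside the triangulated structure). The strategy is therefore to do a case analysis on $\deg(w')\in\{3,4,5\}$ and on how the two extra bracing edges interact with the link of $w'$: when $w'$ has degree $3$ there are three candidate edges to contract and only very constrained configurations block all three; when $w'$ has degree $4$ or $5$ the two spare bracing edges can be used as slack, choosing a neighbour $w$ of $w'$ whose star avoids the obstructing bracing edges.

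An alternative, perhaps cleaner route is to invoke Whiteley's classical vertex-splitting theorem for simple triangulations of the $2$-sphere (which reduces any such triangulation to $K_4$ by a sequence of reverse vertex splits) and then track how the two extra edges can be carried along or, if necessary, temporarily replaced by other edges of the triangulation so that the reverse split is admissible at each step. The stopping condition is then exactly one of the small base cases $K_6-e$ or $K_5\cup_{K_3}K_5$, depending on how the two bracing edges accumulate at the bottom of the construction. The hardest part in either approach is the bookkeeping needed to ensure that the two bracing edges never obstruct simplicity for \emph{every} choice of reducible edge simultaneously, forcing the reduction to terminate at the correct base graph.
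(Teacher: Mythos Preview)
The paper does not actually prove this proposition: it is quoted directly from \cite{CruKasKitSch} and used as a black box. So there is no argument in the present paper to compare your attempt against.

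On the merits of your sketch itself: the induction-by-edge-contraction strategy is the natural one and is almost certainly in the spirit of what is done in \cite{CruKasKitSch}, but what you have written is a plan, not a proof. Two concrete places where real work is missing. First, the $n=7$ base case is asserted (``one enumerates \ldots and checks'') but not carried out --- there are several $7$-vertex sphere triangulations, each admitting many choices of bracing pair, and you must actually exhibit for each a valid inverse $3$-dimensional vertex split down to $K_6-e$, or else identify it as $K_5\cup_{K_3}K_5$. Second, in the inductive step you correctly identify the obstruction (extra common neighbours of $w,w'$ coming from the bracing edges force multi-edges after contraction) and propose a case split on $\deg(w')\in\{3,4,5\}$, but you do not execute any of the cases, nor do you verify that the contracted graph is again of the form ``triangulation plus two extra edges'' rather than merely a simple graph with the right edge count. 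Your alternative route via the Steinitz/Whiteley reduction of sphere triangulations to $K_4$ is vaguer still: as you yourself concede in the final sentence, carrying the two bracing edges through that reduction without ever blocking all contractible edges simultaneously is exactly the crux, and you have not addressed it.
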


\begin{figure}[ht!]
 \centering
\begin{tikzpicture}[scale=0.8]
\begin{scope}[shift={(0,0)},scale=1]
  \coordinate (A1) at (-2,0);
  \coordinate (A2) at (-2,1);
  \coordinate (A3) at (0,-0.5);
  \coordinate (A4) at (0,1.5);
  \coordinate (A5) at (2,0);
  \coordinate (A6) at (2,1);

  \draw[thick] (A2) -- (A4) -- (A1) -- (A6) -- (A2);
  \draw[thick] (A2) -- (A3) -- (A4) -- (A5) -- (A1) -- (A6) -- (A3);
  
  \draw[dashed, thick] (A4) -- (A6) -- (A5) -- (A3) -- (A1) -- (A2);
  
  \node[draw,circle,inner sep=1.4pt,fill] at (A1) {};
  \node[draw,circle,inner sep=1.4pt,fill] at (A2) {};
  \node[draw,circle,inner sep=1.4pt,fill] at (A3) {};
  \node[draw,circle,inner sep=1.4pt,fill] at (A4) {};
  \node[draw,circle,inner sep=1.4pt,fill] at (A5) {};
  \node[draw,circle,inner sep=1.4pt,fill] at (A6) {};
\end{scope}

\begin{scope}[shift={(6,0)},scale=1]
  \coordinate (A1) at (-2,0);
  \coordinate (A2) at (-2,1);
  \coordinate (A3) at (-0.5,0);
  \coordinate (A4) at (0,1);
  \coordinate (A5) at (0.5,0);
  \coordinate (A6) at (2,0);
  \coordinate (A7) at (2,1);
	
  \draw[thick] (A3) -- (A4) -- (A5) -- (A3);
  \draw[thick] (A1) -- (A2) -- (A4);
  \draw[thick] (A2) -- (A5);
  \draw[thick] (A1) edge [bend right] (A5);
  \draw[thick] (A3) edge [bend right] (A6);
  \draw[thick] (A6) -- (A7) -- (A4);
  \draw[thick] (A7) -- (A3);
  
  \draw[dashed, thick] (A2) -- (A3) -- (A1) -- (A4) -- (A6) -- (A5) -- (A7);
  
  \node[draw,circle,inner sep=1.4pt,fill] at (A1) {};
  \node[draw,circle,inner sep=1.4pt,fill] at (A2) {};
  \node[draw,circle,inner sep=1.4pt,fill] at (A3) {};
  \node[draw,circle,inner sep=1.4pt,fill] at (A4) {};
  \node[draw,circle,inner sep=1.4pt,fill] at (A5) {};
  \node[draw,circle,inner sep=1.4pt,fill] at (A6) {};
  \node[draw,circle,inner sep=1.4pt,fill] at (A7) {};
\end{scope}
\end{tikzpicture}
\caption{Decompositions of $K_6-e$ (left) and  $K_5 \cup_{K_3} K_5$ (right) into a spanning $(2,3)$-tight subgraph (not dashed) and a spanning tree (dashed).}
\label{fig:k5glued}
\end{figure}
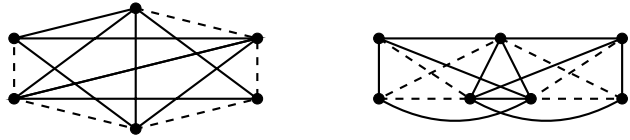

\begin{theorem}\label{t:braced}
Let $X$ be a generic normed plane and let $G$ be a simple graph. 
Suppose one of the following conditions holds.
 \begin{enumerate}[(a)]
 \item $X$ is isometrically isomorphic to $\mathbb{R}^2$ and $G$ is formed from a triangulation of the 2-sphere plus two extra edges.
 \item $X$ is not isometrically isomorphic to $\mathbb{R}^2$ and $G$ is formed from a triangulation of the 2-sphere plus three extra edges.  
 \end{enumerate}
 Then $G$ is minimally rigid in the cylindrical normed space $X \oplus_\infty \mathbb{R}$.
\end{theorem}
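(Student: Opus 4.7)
I would handle the two parts by different but complementary strategies: an inductive vertex-splitting argument for part~(a), and a direct sparsity count for part~(b).

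For part~(a), the plan is to apply Proposition \ref{p:CruKasKitSch} to exhibit $G$ as the last term of a sequence of 3-dimensional vertex splits starting from either $K_6-e$ or $K_5\cup_{K_3}K_5$. The explicit decompositions displayed in Figure \ref{fig:k5glued} show each base graph as an edge-disjoint union of a spanning $(2,3)$-tight subgraph and a spanning tree, so by Theorem \ref{mainthm}(a) each base graph is minimally rigid in $\mathbb{R}^2\oplus_\infty\mathbb{R}$. Repeated application of Proposition \ref{p:2dvertsplit}, which guarantees that each 3-dimensional vertex split preserves both independence and rigidity in $X\oplus_\infty\mathbb{R}$ whenever $X$ is a generic normed plane, then propagates minimal rigidity all the way up the construction to~$G$.

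For part~(b), rather than attempting to build $G$ inductively, I would argue directly that $G$ is $(3,3)$-tight and invoke Theorem \ref{mainthm}(b). The global edge count is immediate from Euler's formula: $|E(G)| = (3n-6)+3 = 3|V(G)|-3$. For the sparsity condition, fix any subgraph $H\subseteq G$ with at least one edge. When $|V(H)|\ge 3$, the triangulation edges lying inside $H$ form a simple planar graph on $|V(H)|$ vertices and so contribute at most $3|V(H)|-6$ edges, while the remaining edges of $H$ come from the three extra edges of $G$ and contribute at most $3$; adding these bounds yields $|E(H)|\le 3|V(H)|-3$. The cases $|V(H)|\in\{1,2\}$ are immediate from simplicity. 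Hence $G$ is $(3,3)$-tight, as required.

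The step requiring the most attention is the verification that the base graphs in part~(a) really do admit the $(2,3)$-tight plus spanning-tree decompositions suggested by Figure \ref{fig:k5glued}; this is a small finite check but is the only genuinely ad hoc input to the argument. The remainder is a bookkeeping exercise combining Propositions \ref{p:CruKasKitSch} and \ref{p:2dvertsplit} with Theorem \ref{mainthm}, and the planar count used in part~(b) rests on nothing beyond the classical bound $|E|\le 3|V|-6$ for simple planar graphs.
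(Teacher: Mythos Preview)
Your proposal is correct and follows essentially the same route as the paper: part~(a) combines the base-graph decompositions of Figure~\ref{fig:k5glued} with Theorem~\ref{mainthm}(a), Proposition~\ref{p:CruKasKitSch} and Proposition~\ref{p:2dvertsplit}, while part~(b) reduces to the observation that $G$ is $(3,3)$-tight and then applies Theorem~\ref{mainthm}(b). Your part~(b) is in fact more explicit than the paper, which simply asserts $(3,3)$-tightness; your Euler-formula count and the planar bound $|E|\le 3|V|-6$ for subgraphs supply exactly the missing justification. One small point worth making explicit in part~(a): Proposition~\ref{p:2dvertsplit} is stated for independence and rigidity separately, so to conclude \emph{minimal} rigidity you are implicitly using either that a graph which is both independent and rigid is automatically minimally rigid (true, via regular placements), or the edge-count observation that a $3$-dimensional vertex split adds one vertex and three edges and hence preserves the equality $|E|=3|V|-4$.
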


\begin{proof}
Suppose $(a)$ holds. Note that both $K_6-e$ and $K_5 \cup_{K_3} K_5$ can be expressed as an edge-disjoint union of a spanning $(2,3)$-tight subgraph and a spanning tree  (see Figure \ref{fig:k5glued}). Thus, by Theorem \ref{mainthm}(a), $K_6-e$ and $K_5 \cup_{K_3} K_5$ are minimally rigid in $X \oplus_\infty \mathbb{R}$.
    The result now follows  from  Proposition \ref{p:2dvertsplit} and Proposition \ref{p:CruKasKitSch}.

If $(b)$ holds then $G$ is $(3,3)$-tight and so the result follows from Theorem \ref{mainthm}(b).
\end{proof}

An immediate corollary of Theorem \ref{t:braced} is that all planar graphs are independent in the cylindrical normed space $X \oplus_\infty \mathbb{R}$ whenever $X$ is a generic normed plane.
An analogous statement holds true for 3-dimensional normed spaces that are both strictly convex and smooth;
see \cite[Theorem 6.3]{DewKitNix}. Note that cylindrical normed spaces are neither strictly convex nor smooth.

\subsection{Triangulations of the projective plane or the torus}

In the following, we denote by  $K_7 - K_3$ the complete graph on seven vertices with three edges that form a cycle removed.

\begin{proposition}[Barnette \cite{Barnette}]\label{p:VsplitPrjPlanar}
    Let $G$ be a triangulation of the projective plane.
    Then $G$ can be constructed from either $K_6$ or $K_7 - K_3$ by a sequence of 3-dimensional vertex splitting operations.
\end{proposition}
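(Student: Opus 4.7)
The plan is to prove this by reverse induction on $|V(G)|$, viewing the statement through the lens of edge contraction, which is the inverse of a $3$-dimensional vertex split within the class of triangulations of a surface. Given an edge $uv$ of a triangulation $G$, its contraction $G/uv$ identifies $u$ and $v$ and removes the resulting multi-edges; this yields a simple triangulation of the same surface precisely when the only common neighbours of $u$ and $v$ are the two vertices that form triangular faces with $uv$. Call such an edge \emph{contractible}. When $uv$ is contractible, reversing the contraction on $G/uv$ recovers $G$ as a $3$-dimensional vertex split (with $w, w'$ playing the roles of $u, v$ and $v_1, v_2$ the two common neighbours).

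First I would verify the base cases directly: check that both $K_6$ and $K_7 - K_3$ embed as triangulations of the projective plane and that neither admits any contractible edge (in $K_6$ every pair of vertices has four common neighbours; in $K_7-K_3$ a similar check suffices). For the inductive step, it then suffices to show that every triangulation $G$ of the projective plane with $|V(G)| \geq 8$ contains at least one contractible edge. Given such an edge $uv$, contract it to obtain a triangulation $G'$ with $|V(G')| = |V(G)| - 1$, apply the inductive hypothesis to write $G'$ as a sequence of $3$-dimensional vertex splits starting from $K_6$ or $K_7 - K_3$, and append one further vertex split to recover $G$.

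The hard part, and the true combinatorial content of Barnette's theorem, is the classification of \emph{irreducible} triangulations: one must show that if every edge of $G$ is non-contractible then $G \in \{K_6, K_7-K_3\}$. An edge $uv$ is non-contractible exactly when some third vertex $w$ is adjacent to both $u$ and $v$ with $uvw$ not a face, so that $uvw$ is a \emph{separating $3$-cycle}. The plan is to assume that every edge lies in such a separating triangle and then derive global structural restrictions. The main tools will be the Euler relation $|V| - |E| + |F| = 1$ for the projective plane together with the triangulation identity $3|F| = 2|E|$, which force $|E| = 3|V| - 3$, combined with a degree analysis at a vertex of minimum degree and a careful study of how separating $3$-cycles cut the projective plane into two discs (one bounding a disc on the sphere-like side, and the other containing the unique crosscap). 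This analysis proceeds by case distinction on the minimum degree and on how the separating triangles overlap, and the bulk of the work is bookkeeping that rules out all configurations with $|V| \geq 8$. This casework is the main obstacle and is the part that is genuinely Barnette's, rather than a formal consequence of the earlier results in the paper.
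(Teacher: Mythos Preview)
The paper does not actually prove this proposition: it is stated as an external result and attributed to Barnette \cite{Barnette}, with no argument given in the paper itself. So there is nothing in the paper to compare your proposal against.

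That said, your outline is essentially the standard route to Barnette's theorem: interpret the $3$-dimensional vertex split as the inverse of edge contraction in a surface triangulation, reduce the statement to the classification of irreducible triangulations of the projective plane, and then show (by Euler-formula counting and a case analysis on separating $3$-cycles) that $K_6$ and $K_7-K_3$ are the only two. You correctly identify that the substantive work lies in the irreducibility classification rather than in the induction, and you are right that this is Barnette's contribution rather than something derivable from the rest of the present paper. Your sketch is accurate at the level of a plan, but be aware that the casework ruling out irreducible triangulations on $\geq 8$ vertices is genuinely intricate and not something that falls out of the degree/Euler considerations alone; you would need to reproduce or cite Barnette's full argument to make this a proof.
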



\begin{theorem}\label{t:prjplanar}
    Let $X$ be a generic normed plane and let $G$ be a triangulation of the projective plane.
\begin{enumerate}[(a)]
\item If $X$ is isometrically isomorphic to $\mathbb{R}^2$ then $G$ is rigid, but not minimally rigid, in the cylindrical normed space $X \oplus_\infty \mathbb{R}$.
\item If $X$ is not isometrically isomorphic to $\mathbb{R}^2$ then  $G$ is minimally rigid in the cylindrical normed space $X \oplus_\infty \mathbb{R}$.
\end{enumerate}
\end{theorem}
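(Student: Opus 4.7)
The plan is to induct via vertex splits, using Proposition~\ref{p:VsplitPrjPlanar} to reduce to the base graphs $K_6$ and $K_7-K_3$, and Proposition~\ref{p:2dvertsplit} to propagate (minimal) rigidity along the construction sequence.

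For part (b), since $X$ is not isometrically isomorphic to $\mathbb{R}^2$, Theorem~\ref{mainthm}(b) reduces the claim to showing that $G$ is $(3,3)$-tight. I first verify directly that both base graphs are $(3,3)$-tight: each has exactly $3|V|-3$ edges, and $(3,3)$-sparsity on $k\le 5$ vertices is automatic from $\binom{k}{2}\le 3k-3$. The only case needing genuine checking is $k=6$ inside $K_7-K_3$, where every $6$-vertex induced subgraph misses at least one edge of the removed triangle and so has at most $14<15$ edges. Theorem~\ref{mainthm}(b) then gives minimal rigidity of both base graphs in $X\oplus_\infty\mathbb{R}$, and Propositions~\ref{p:2dvertsplit} and \ref{p:VsplitPrjPlanar} propagate this to $G$.

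For part (a), Euler's formula for the projective plane combined with the triangulation identity $3F=2E$ gives $|E(G)|=3|V|-3$. By Theorem~\ref{mainthm}(a), any minimally rigid graph in $\mathbb{R}^2\oplus_\infty\mathbb{R}$ decomposes into a spanning $(2,3)$-tight subgraph and a spanning tree, so has exactly $(2|V|-3)+(|V|-1)=3|V|-4$ edges. Hence $G$ has one edge too many to be minimally rigid. For rigidity I would again induct on vertex splits. The graph $K_6$ contains $K_6-e$, which is the octahedron (a triangulation of the $2$-sphere) plus two extra edges; by Theorem~\ref{t:braced}(a) this is minimally rigid, so $K_6$ is rigid. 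For $K_7-K_3$, the plan is to exhibit an explicit spanning sphere triangulation inside $K_7-K_3$ by taking the tetrahedron $K_4$ on $\{4,5,6,7\}$ and stellating three distinct faces with the vertices $1,2,3$; the resulting $15$-edge triangulation uses no edges among $\{1,2,3\}$ and so lies inside $K_7-K_3$. Adjoining any two of the remaining three edges of $K_7-K_3$ produces a spanning subgraph of the form handled by Theorem~\ref{t:braced}(a), which is therefore minimally rigid, whence $K_7-K_3$ is rigid. Proposition~\ref{p:2dvertsplit} then propagates rigidity along the vertex-split sequence to $G$.

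The main obstacle will be the explicit construction for $K_7-K_3$: the requirement that $\{1,2,3\}$ be an independent set rules out the most symmetric sphere triangulations on $7$ vertices, and one has to check that the tetrahedron-stellation construction genuinely produces a sphere triangulation contained in $K_7-K_3$.
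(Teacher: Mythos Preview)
Your argument is correct and follows the paper's overall strategy: handle the base graphs $K_6$ and $K_7-K_3$ directly, then propagate via Propositions~\ref{p:VsplitPrjPlanar} and~\ref{p:2dvertsplit}. For part~(b) the paper exhibits three edge-disjoint spanning trees in each base graph (Figure~\ref{fig:K7-K3}) rather than checking $(3,3)$-tightness; the two routes are equivalent via Theorem~\ref{thm:nashwill}. Your explicit Euler-formula edge count for the ``not minimally rigid'' clause of~(a) is a useful addition that the paper leaves implicit.

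The substantive divergence is your treatment of $K_7-K_3$ in part~(a). The paper asserts that $K_7-K_3$ contains a spanning copy of $K_5\cup_{K_3}K_5$, but this is in fact impossible: the complement of $K_5\cup_{K_3}K_5$ in $K_7$ is a $4$-cycle $K_{2,2}$, while the complement of $K_7-K_3$ is a triangle, and a triangle cannot embed in a bipartite graph. Your stellated-tetrahedron construction sidesteps this issue. Stellating three faces of the tetrahedron on $\{4,5,6,7\}$ with the vertices $1,2,3$ produces a $2$-sphere triangulation on seven vertices that uses no edge among $\{1,2,3\}$, so it lies inside $K_7-K_3$; adjoining any two of the three leftover edges $\{17,26,35\}$ then puts you in the scope of Theorem~\ref{t:braced}(a). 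The verification you flag as the main obstacle is routine---each face stellation of a sphere triangulation is again a sphere triangulation---so your proof goes through cleanly and in fact repairs a gap in the paper's argument.
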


\begin{proof}
$(a)$
As noted in the proof of Theorem \ref{t:braced}(a), $K_6 -e$ and $K_5 \cup_{K_3} K_5$ are minimally rigid in $X \oplus_\infty \mathbb{R}$.
Since $K_6$ contains a spanning copy of $K_6 -e$ and $K_7 - K_3$ contains a spanning copy of $K_5 \cup_{K_3} K_5$,
both $K_6$ and $K_7-K_3$ are rigid in $X \oplus_\infty \mathbb{R}$.
The result now follows from Proposition \ref{p:2dvertsplit} and Proposition \ref{p:VsplitPrjPlanar}.

$(b)$
Note that $K_6$ and $K_7 - K_3$ are both expressible as an edge disjoint union of three spanning trees (see Figure \ref{fig:K7-K3}). Thus, by Theorem \ref{mainthm}(b), $K_6$ and $K_7 - K_3$ are minimally rigid in $X\oplus_\infty \mathbb{R}$.  The result now follows  from Proposition \ref{p:2dvertsplit} and Proposition \ref{p:VsplitPrjPlanar}.
\end{proof}

\begin{figure}[ht]
 \centering
\begin{tikzpicture}
\begin{scope}[shift={(0,0)},scale=0.8]
  \coordinate (A1) at (-2,0.25);
  \coordinate (A2) at (-2,1.75);
  \coordinate (A3) at (0,-0.5);
  \coordinate (A4) at (0,2.5);
  \coordinate (A5) at (2,0.25);
  \coordinate (A6) at (2,1.75);

  \draw[very thick] (A3) -- (A4) -- (A2) -- (A6) -- (A1) -- (A5);
  \draw[very thick,gray] (A1) -- (A4) -- (A5) -- (A2) -- (A3) -- (A6);
  
  \draw[dashed, thick] (A4) -- (A6) -- (A5) -- (A3) -- (A1) -- (A2);
  
  \node[draw,circle,inner sep=1.4pt,fill] at (A1) {};
  \node[draw,circle,inner sep=1.4pt,fill] at (A2) {};
  \node[draw,circle,inner sep=1.4pt,fill] at (A3) {};
  \node[draw,circle,inner sep=1.4pt,fill] at (A4) {};
  \node[draw,circle,inner sep=1.4pt,fill] at (A5) {};
  \node[draw,circle,inner sep=1.4pt,fill] at (A6) {};
\end{scope}

\begin{scope}[shift={(6,0)},scale=0.8]
  \coordinate (A1) at (-2,0.25);
  \coordinate (A2) at (-2,1.75);
  \coordinate (A3) at (-0.75,-0.5);
  \coordinate (A4) at (0,2.5);
  \coordinate (A5) at (0.75,-0.5);
  \coordinate (A6) at (2,0.25);
  \coordinate (A7) at (2,1.75);
	
  \draw[very thick] (A1) -- (A2) -- (A4);
  \draw[very thick] (A2) -- (A5);
  \draw[very thick, gray] (A1) -- (A5);
  \draw[very thick, gray] (A3) -- (A6);
  \draw[very thick, gray] (A4) -- (A7);
  \draw[very thick, gray] (A1) -- (A7) -- (A2) -- (A6);
  \draw[very thick] (A1) -- (A6) -- (A7);
  \draw[very thick] (A7) -- (A3);
  
  \draw[dashed, thick] (A2) -- (A3) -- (A1) -- (A4) -- (A6) -- (A5) -- (A7);
  
  \node[draw,circle,inner sep=1.4pt,fill] at (A1) {};
  \node[draw,circle,inner sep=1.4pt,fill] at (A2) {};
  \node[draw,circle,inner sep=1.4pt,fill] at (A3) {};
  \node[draw,circle,inner sep=1.4pt,fill] at (A4) {};
  \node[draw,circle,inner sep=1.4pt,fill] at (A5) {};
  \node[draw,circle,inner sep=1.4pt,fill] at (A6) {};
  \node[draw,circle,inner sep=1.4pt,fill] at (A7) {};
\end{scope}
\end{tikzpicture}
\caption{Decompositions of $K_6$ (left) and  $K_7-K_3$ (right) into three edge-disjoint spanning trees.}
\label{fig:K7-K3}
\end{figure}
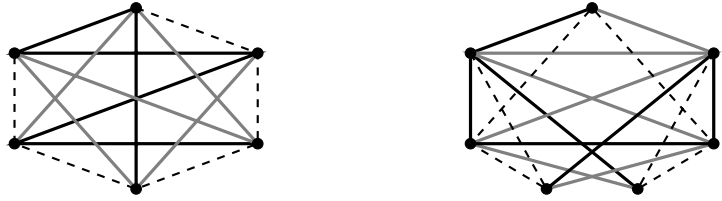

\begin{proposition}[Lavrenchenko \cite{lav}]\label{t:VsplitTorus}
    Let $G$ be a triangulation of the torus.
    Then $G$ can be constructed from either $K_7$ or one of the graphs pictured in Figure \ref{fig:torus} (see Appendix \ref{sec:torus}) by a sequence of 3-dimensional vertex splitting operations.
\end{proposition}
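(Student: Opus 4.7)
The plan is to follow the standard strategy of analysing the \emph{inverse} operation: in a triangulation of a surface, a 3-dimensional vertex split (as defined earlier, applied with $v_1, v_2$ chosen to be the two neighbours of $w$ that will become shared) is inverse to an \emph{edge contraction} of the edge $ww'$. An edge contraction in a triangulation $G$ of the torus collapses $ww'$ and identifies the two triangles incident with $ww'$ with pairs of glued edges; it yields a new triangulation of the torus precisely when the two endpoints have no common neighbour other than the two vertices of the triangles on $ww'$. Call an edge \emph{contractible} if the contraction yields a simple triangulation, and call $G$ \emph{irreducible} if no edge is contractible. The plan is therefore: first reduce to irreducible triangulations, then classify these, then reverse the reductions to obtain the vertex-split sequence.

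The first step is to show that any triangulation of the torus reduces, through a finite sequence of edge contractions, to an irreducible one. This is straightforward because each contraction strictly decreases the number of vertices, and once contractions are blocked we are at an irreducible triangulation by definition. The non-trivial verification is that in a torus triangulation we cannot become stuck before reaching a small graph: if some edge is not contractible then it lies on a non-facial triangle, and counting arguments using Euler's formula $|V|-|E|+|F|=0$ together with $2|E|=3|F|$ on the torus must be used to argue that, away from a short list of small exceptional configurations, some contractible edge always exists.

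The main obstacle, and the genuinely hard work, is the second step: classifying \emph{all} irreducible torus triangulations. The classical result of Lavrenchenko \cite{lav} is that there are exactly 21 such graphs, namely $K_7$ together with the 20 triangulations pictured in Appendix \ref{sec:torus}. Proving this classification requires a careful case analysis: one fixes a vertex of smallest (or largest) degree, examines how its link is triangulated in a torus, and uses the non-contractibility constraint (every edge lies on a non-facial triangle) to force the combinatorial structure of the surrounding triangles. Each of the possible degree sequences and link configurations must be worked through, pruning cases that yield a sphere or Klein bottle rather than a torus, and checking when two seemingly different configurations are in fact isomorphic. This enumeration is the core difficulty and is best imported directly from \cite{lav}.

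Finally, granted the classification, the proof concludes immediately: given an arbitrary triangulation $G$ of the torus, apply contractions to obtain one of the 21 irreducible triangulations $G_0$; reversing this sequence expresses $G$ as a sequence of 3-dimensional vertex splits starting from $G_0$. Since each such $G_0$ is either $K_7$ or one of the graphs in Figure \ref{fig:torus}, the proposition follows. Thus in the write-up I would simply cite \cite{lav} for the classification and spend at most a couple of sentences reconciling the combinatorial vertex split of a surface triangulation with the graph-theoretic 3-dimensional vertex split defined in Section \ref{S:IR}.
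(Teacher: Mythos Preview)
The paper does not give a proof of this proposition at all: it is stated as a result of Lavrenchenko and simply cited to \cite{lav}. Your proposal correctly identifies this and arrives at the same conclusion---namely, that the classification of irreducible torus triangulations is imported from \cite{lav}---while additionally sketching the standard contraction/irreducibility argument underlying that classification; so your approach is consistent with (and more detailed than) the paper's treatment.
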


\begin{theorem}\label{t:torus}
     Let $X$ be a generic normed plane and let $G$ be a triangulation of the torus.
Then $G$ is rigid, but not minimally rigid, in the cylindrical normed space $X \oplus_\infty \mathbb{R}$.
\end{theorem}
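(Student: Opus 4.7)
The plan, following the pattern of Theorems~\ref{t:braced} and~\ref{t:prjplanar}, is to invoke Lavrenchenko's theorem (Proposition~\ref{t:VsplitTorus}) to reduce the problem to the finite list of base graphs consisting of $K_7$ and the graphs in Figure~\ref{fig:torus}, and then to use the fact that 3-dimensional vertex splits preserve rigidity in $X\oplus_\infty\mathbb{R}$ (Proposition~\ref{p:2dvertsplit}). It then remains to verify two statements: every torus triangulation fails to be minimally rigid in $X\oplus_\infty\mathbb{R}$, and every base graph is rigid in $X\oplus_\infty\mathbb{R}$.

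For the failure of minimal rigidity I would invoke Euler's formula. Any triangulation $G$ of the torus satisfies $|V|-|E|+|F|=0$ together with $2|E|=3|F|$, so $|E|=3|V|$. By Theorem~\ref{mainthm}, a minimally rigid graph in $X\oplus_\infty\mathbb{R}$ is either $(3,4)$-tight (when $X$ is isometrically isomorphic to $\mathbb{R}^2$) or $(3,3)$-tight (otherwise), and in either case has at most $3|V|-3$ edges. This bound is violated by every torus triangulation, so minimal rigidity is impossible.

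For rigidity of each base graph $G_0$, I apply Theorem~\ref{graphthm}: it suffices to exhibit edge-disjoint spanning subgraphs $H,T$ of $G_0$ with $H$ rigid in $X$ and $T$ connected. When $X$ is not isometrically isomorphic to $\mathbb{R}^2$, Theorem~\ref{mainthm}(b) together with Theorem~\ref{thm:nashwill} reduces this to finding three edge-disjoint spanning trees in $G_0$; for $K_7$ this is immediate (for instance, via its decomposition into three Hamiltonian cycles), and each remaining graph in Figure~\ref{fig:torus} can be checked directly using the Nash--Williams partition condition. When $X$ is isometrically isomorphic to $\mathbb{R}^2$, Theorem~\ref{mainthm}(a) instead requires a spanning $(2,3)$-tight subgraph of $G_0$ whose complement contains a spanning tree; for $K_7$ this can be produced by building a $(2,3)$-tight graph from a base $K_3$ by sequential 2-dimensional 0-extensions and then verifying that the remaining ten edges span $V(K_7)$, and analogous constructions handle the other base graphs.

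The principal obstacle is this case-by-case verification over the irreducible torus triangulations appearing in Figure~\ref{fig:torus}, and particularly in the Euclidean case, where no clean edge-count shortcut analogous to the $(3,3)$-tight argument of Theorem~\ref{t:prjplanar}(b) is available. A uniform proof would require a general sparsity or connectivity property of irreducible torus triangulations guaranteeing both a three-spanning-tree decomposition and a $(2,3)$-tight-plus-spanning-tree decomposition; no such statement is readily available from the material developed earlier in the paper, so a finite inspection of the base graphs seems unavoidable.
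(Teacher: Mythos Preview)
Your proposal is correct and, for the Euclidean case, follows exactly the paper's route: reduce to Lavrenchenko's irreducible triangulations via Proposition~\ref{t:VsplitTorus}, verify that each base graph decomposes into a spanning tree and a subgraph rigid in $\mathbb{R}^2$, and propagate rigidity through vertex splits using Proposition~\ref{p:2dvertsplit}. The case-by-case verification you flag as the principal obstacle is precisely what Figure~\ref{fig:torus} records. Your Euler-formula argument for the failure of minimal rigidity is correct and in fact more explicit than the paper, which leaves that part unstated.

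The one genuine difference is in the non-Euclidean case. You propose to check the base graphs individually for three edge-disjoint spanning trees; the paper instead bypasses Lavrenchenko entirely here and applies a theorem of Kundu~\cite{kundu} to conclude that \emph{every} torus triangulation already contains three edge-disjoint spanning trees, so Theorem~\ref{mainthm}(b) gives rigidity directly with no case analysis and no appeal to vertex splits. Your approach still works, but the paper's is cleaner and avoids the finite inspection in that half of the argument.
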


\begin{proof}
  Suppose $X$ is isometrically isomorphic to $\mathbb{R}^2$.
  As $K_7$ contains a spanning copy of $K_5 \cup_{K_3} K_5$,
  it is rigid in $X \oplus_\infty \mathbb{R}$ by Theorem \ref{t:braced}.
  Likewise, any graph featured in Figure \ref{fig:torus} can be decomposed into a spanning tree and a graph that is rigid in $X$,
  and hence is rigid in $X \oplus_\infty \mathbb{R}$ by Theorem \ref{mainthm}(a).
  It now follows that $G$ is rigid in $X \oplus_\infty \mathbb{R}$ by Proposition \ref{p:2dvertsplit} and Theorem \ref{t:VsplitTorus}.

  Suppose $X$ is not isometrically isomorphic to $\mathbb{R}^2$.
  By a theorem of Kundu \cite{kundu}, $G$ contains three edge-disjoint spanning trees and so, by Theorem \ref{mainthm}(b), $G$ is rigid in $X\oplus_\infty \mathbb{R}$. 
\end{proof}

\subsection{Connectivity criteria}

The following corollary gives an affirmative answer to Conjecture 59(b) in \cite{matrixnorm}. 

\begin{corollary}\label{c:8econn}
    Let $G=(V,E)$ be a graph with $|V| \geq 5$ such that:
    \begin{enumerate}[(i)]
        \item $G$ is 8-edge-connected,
        \item $G-\{u\}$ is 6-edge-connected for all $u \in V$,
        \item $G-\{u,v\}$ is 4-edge-connected for all $u,v \in V$, and
        \item $G-\{u,v,w\}$ is 2-edge-connected for all $u,v,w \in V$.
    \end{enumerate}
    Then $G$ is rigid in the cylindrical normed space $\mathbb{R}^2 \oplus_\infty \mathbb{R}$. In particular, every $8$-connected graph is  rigid in $\mathbb{R}^2 \oplus_\infty \mathbb{R}$.
\end{corollary}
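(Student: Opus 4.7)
The plan is to apply Theorem~\ref{graphthm}(a) with $X=\mathbb{R}^2$: since $\mathbb{R}^2$ is a generic normed plane, it suffices to exhibit a partition of $E(G)$ into a spanning rigid subgraph $H$ in $\mathbb{R}^2$ together with a spanning connected subgraph $T$. By Theorem~\ref{t:2d}(a), $H$ is rigid in $\mathbb{R}^2$ if and only if it contains a spanning $(2,3)$-tight subgraph. So the core task is to find edge-disjoint subsets $E_1,E_2 \subseteq E(G)$ with $E_1\cup E_2 = E(G)$ such that $E_1$ contains a spanning $(2,3)$-tight subgraph and $E_2$ contains a spanning tree.

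This packing question fits the Edmonds matroid-union framework, where the two matroids are the generic $2$-dimensional rigidity matroid $\mathcal{R}_2$ (whose bases are the spanning $(2,3)$-tight subgraphs) and the cycle matroid $M(G)$ (whose bases are the spanning trees). Edmonds' theorem tells us that the desired packing exists if and only if for every $F \subseteq E(G)$,
\begin{align*}
|E\setminus F| + r_{\mathcal{R}_2}(F) + r_{M(G)}(F) \;\geq\; (2|V|-3)+(|V|-1).
\end{align*}
Equivalently, one obtains a partition-style inequality: for every partition $\mathcal{P}=\{V_1,\dots,V_r\}$ of $V$ the number of crossing edges exceeds a weighted bound, with correction terms that are sensitive to the presence of singleton, pair, and triple parts. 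The hypotheses (i)--(iv) are engineered for exactly this purpose: (i) secures the bound for partitions whose parts are all large, while (ii), (iii), (iv) supply the replacements needed when one, two, or three parts are singletons, via the edge-connectivity of the corresponding vertex-deleted subgraph.

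The main obstacle will be the combinatorial verification of this partition inequality. I would argue by contradiction: take a violating partition and iteratively uncross its parts using submodularity of the edge-boundary function $d_G(\cdot)$ until the offending configuration reduces to a cut obtained from deleting at most three vertices, at which point one of hypotheses (i)--(iv) is contradicted. An alternative route, should one exist, is to invoke a Jordán--Szigeti-style theorem that directly asserts the existence of such a packing under exactly these vertex-deleted edge-connectivity hypotheses. Either way, once the decomposition $G=H\cup T$ is produced, Theorem~\ref{graphthm}(a) immediately yields rigidity of $G$ in $\mathbb{R}^2\oplus_\infty\mathbb{R}$.

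For the final assertion, any $8$-connected graph on at least five vertices is automatically $8$-edge-connected, and the removal of $k \leq 3$ vertices leaves an $(8-k)$-connected, hence $(8-k)$-edge-connected, graph. Since $8-k \geq 8-2k$ for $k \in \{1,2,3\}$, the vertex-deleted edge-connectivity conditions (ii)--(iv) are automatically satisfied, so the main statement applies to deduce rigidity of $G$ in $\mathbb{R}^2\oplus_\infty\mathbb{R}$.
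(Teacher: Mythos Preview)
Your strategy is correct and matches the paper's: reduce to finding an edge-disjoint packing of a spanning $(2,3)$-tight subgraph and a spanning tree, then apply Theorem~\ref{mainthm}(a). The paper, however, does not carry out the matroid-union/partition argument you sketch; it simply invokes \cite[Theorem~4]{packing} (Cheriyan, Durand~de~Gevigney, Szigeti), which is precisely the ``alternative route'' you mention---that result asserts, under exactly hypotheses (i)--(iv), the existence of the desired packing. So your Edmonds-type verification is unnecessary (and, as written, only a sketch); the correct attribution is Cheriyan--Durand~de~Gevigney--Szigeti rather than Jord\'an--Szigeti. Your argument for the ``in particular'' clause is fine.
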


\proof
By \cite[Theorem 4]{packing}, $G$ is an edge-disjoint union of a spanning $(2,3)$-tight subgraph and a spanning tree. Thus the result follows from Theorem \ref{mainthm}(a).
\endproof



We see that 8-connected is best possible in Corollary \ref{c:8econn} with the following example.
Here we use a similar method to that employed in \cite{lovasz} to prove that 5-connectivity is not sufficient for rigidity in the Euclidean plane.

\begin{example}
    Choose any 7-connected 7-regular graph $G'=(V',E')$ with $|V'| \geq 10$ (for example, the complete bipartite graph $K_{7,7}$).
    We now construct the graph $G=(V,E)$ from $G'$  by replacing each vertex of $G'$ with a copy of $K_7$ and then sharing out the edges equally so that $G$ is 7-regular. More precisely, to construct $G$ we
    (i) replace every vertex $v \in V'$ with 7 copies $v_1,\ldots,v_7$,
    (ii) replace every edge $vw\in E'$ with an edge $v_iw_j$ such that each vertex $v_i$ is adjacent to exactly one vertex that did not originally stem from $v$,
    and
    (iii) for every vertex $v \in V'$, add all edges $v_iv_j$ for every pair $1 \leq i < j \leq 7$.
    Since $G'$ is 7-connected and 7-regular, so too is $G$.
    By abuse of notation we label the edges of $G$ that came from the graph $G'$ by $E'$ also.
    
    Let $T$ be any spanning tree of $G$.
    Note that $T$ must include at least $|V'|-1$ edges in $E'$.
    For any graph $H=(U,F)$,
    define $\rank (H)$ to be the cardinality of the largest edge set $F' \subset F$ such that $H'=(U,F')$ is $(2,3)$-sparse;
    by Theorem \ref{t:2d}(a), $H$ is rigid in $\mathbb{R}^2$ if and only if $\rank(H) = 2|U|-3$ (or $|U|=1$).
    Using a result of Lov\'{a}sz and Yemini \cite[Theorem 1]{lovasz},
    we compute that
    \begin{align*}
        \rank(G-T) &\leq |E'\setminus E(T)| + \sum_{v \in V'} \rank(K_7) \\
        &\leq \frac{7|V'|}{2} - (|V'| - 1) + (2|V| - 3|V'|) \\
        &= 2|V|- \frac{|V'|}{2} + 1 \\
        &< 2|V|-3.
    \end{align*}
    Hence $G-T$ is not rigid in $\mathbb{R}^2$.
    As this
    holds for any spanning tree of $G$,
    it follows from Theorem \ref{mainthm}(a) that $G$ has no well-positioned infinitesimally rigid placement in $\mathbb{R}^2 \oplus_\infty \mathbb{R}$.
\end{example}

The following corollary gives an affirmative answer to Conjecture 59(c) in \cite{matrixnorm}. 

\begin{corollary}
    Let $G=(V,E)$ be a graph such that:
    \begin{enumerate}[(i)]
        \item $G$ is 6-connected,
        \item Every vertex-induced subgraph obtained by deleting up to 7 vertices from $G$ has at most one connected component with a non-empty edge set.
    \end{enumerate}
    Then $G$ is rigid in the cylindrical normed space $\mathbb{R}^2 \oplus_\infty \mathbb{R}$.
\end{corollary}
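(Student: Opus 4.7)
The plan is to apply Theorem \ref{mainthm}(a): it suffices to exhibit an edge-disjoint decomposition of $G$ into a spanning subgraph $H$ that is rigid in $\mathbb{R}^2$ and a spanning tree $T$. Since only rigidity (and not minimal rigidity) of $G$ is required, $H$ need not be $(2,3)$-tight; it merely needs to contain a spanning rigid subgraph, which affords more flexibility than in Corollary \ref{c:8econn}.

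The principal ingredient is the Lov\'asz--Yemini theorem that every $6$-vertex-connected graph is rigid in $\mathbb{R}^2$. By hypothesis (i), $G$ is therefore already rigid in $\mathbb{R}^2$, and the task reduces to finding a spanning tree $T \subseteq G$ such that $G - T$ remains rigid in $\mathbb{R}^2$; by another application of Lov\'asz--Yemini it is enough that $G - T$ is $6$-vertex-connected.

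Hypothesis (ii) is precisely the structural condition needed to build such a $T$. It asserts that for any vertex set $S$ with $|S|\leq 7$, the graph $G-S$ has at most one component containing edges; this rules out the peripheral ``fragment'' configurations at small vertex cuts that would otherwise be destroyed by removing tree edges. I would combine this with a packing argument in the style of \cite[Theorem 4]{packing} (as in the proof of Corollary \ref{c:8econn}), the goal being to show that $G$ contains a spanning tree $T$ whose removal preserves $6$-vertex-connectivity. The numerical offset is natural: $6$-connectivity of $G$ paired with control of cuts of size $7=6+1$ exactly accommodates a single tree edge being destroyed at each potential would-be cut.

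The main obstacle is the combinatorial step of actually constructing $T$ from hypotheses (i)-(ii). Once such a $T$ is in hand, the argument closes cleanly: $G-T$ is $6$-connected, hence rigid in $\mathbb{R}^2$ by Lov\'asz--Yemini, and $T$ is a spanning tree, so Theorem \ref{mainthm}(a) yields that $G$ is rigid in $\mathbb{R}^2 \oplus_\infty \mathbb{R}$.
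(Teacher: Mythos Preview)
Your outline correctly identifies Theorem \ref{mainthm}(a) as the target and correctly notes that you need only a rigid (not minimally rigid) $H$. However, the route you propose---find a spanning tree $T$ so that $G-T$ remains $6$-connected, then invoke Lov\'asz--Yemini again---is not just unproven, it generally fails. Six-connectivity only guarantees minimum degree $6$; if $v\in V$ has degree exactly $6$ (which hypotheses (i)--(ii) do not rule out), then any spanning tree $T$ contains at least one edge incident to $v$, so $\deg_{G-T}(v)\le 5$ and $G-T$ cannot be $6$-connected. So the intermediate goal you set is unattainable in general, and the ``numerical offset'' heuristic (control cuts of size $7=6+1$) does not translate into a tree whose removal preserves $6$-connectivity.

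The paper instead cites \cite[Corollary 1.10]{gu}: conditions (i) and (ii) are exactly the hypotheses of Gu's packing result, which directly yields an edge-disjoint decomposition of $G$ into a spanning $(2,3)$-tight subgraph and a spanning tree. Hypothesis (ii) is the ``essential $8$-connectivity'' condition appearing in Gu's statement (deletion of up to $7$ vertices leaves at most one nontrivial component); it is tailored to that packing theorem, not to preserving vertex-connectivity after tree removal. Once you have Gu's decomposition in hand, Theorem \ref{mainthm}(a) finishes the proof in one line. The lesson is that the combinatorial work you flagged as ``the main obstacle'' has already been done in \cite{gu}, and the right move is to quote it rather than to attempt a $6$-connectivity-preserving tree.
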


\proof
By \cite[Corollary 1.10]{gu}, $G$ is an edge-disjoint union of a spanning $(2,3)$-tight subgraph and a spanning tree. Thus the result follows from Theorem \ref{mainthm}(a).
\endproof


\begin{corollary}\label{c:6edgeconn}
    Let $X$ be a generic normed plane which is not isometrically isomorphic to $\mathbb{R}^2$ and let $G=(V,E)$ be a 6-edge-connected graph.
    Then $G$ is rigid in the cylindrical normed space $X \oplus_\infty \mathbb{R}$.
\end{corollary}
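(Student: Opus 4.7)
The plan is to reduce the question to a classical tree-packing result. By Theorem \ref{mainthm}(b), a graph is minimally rigid in $X \oplus_\infty \mathbb{R}$ precisely when it decomposes as an edge-disjoint union of three spanning trees. Hence, to show that $G$ is rigid in $X \oplus_\infty \mathbb{R}$, it suffices to exhibit a spanning subgraph of $G$ that is an edge-disjoint union of three spanning trees; equivalently, to show that $G$ contains three edge-disjoint spanning trees.

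The hypothesis that $G$ is 6-edge-connected is tailor-made for this reduction. The key fact I would invoke is the classical Nash-Williams--Tutte theorem on tree packings: a graph admits $k$ edge-disjoint spanning trees if and only if for every partition of $V$ into $r$ non-empty parts, the number of edges crossing between distinct parts is at least $k(r-1)$. A standard corollary, essentially due to Nash-Williams, is that every $2k$-edge-connected graph contains $k$ edge-disjoint spanning trees; this is immediate from the partition criterion because each of the $r$ parts sends at least $2k$ edges outside itself, so the number of crossing edges is at least $\tfrac{1}{2}(2k)r = kr \geq k(r-1)$.

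Applying this with $k=3$ to the 6-edge-connected graph $G$ yields three edge-disjoint spanning trees $T_1, T_2, T_3$ of $G$. Setting $H = T_1 \cup T_2$ gives a spanning $(2,2)$-tight subgraph of $G$ by Theorem \ref{thm:nashwill}, which is therefore minimally rigid in the generic normed plane $X$ by Theorem \ref{t:2d}(b). Since $T_3$ is a spanning tree, the graph $G$ is an edge-disjoint union of a spanning subgraph rigid in $X$ and a spanning connected subgraph. Theorem \ref{graphthm} (the rigidity case of the implication $(ii) \Rightarrow (i)$) then concludes that $G$ is rigid in $X \oplus_\infty \mathbb{R}$. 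The argument is very short given the earlier results; no real obstacle arises beyond citing the correct tree-packing consequence of $2k$-edge-connectivity.
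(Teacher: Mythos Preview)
Your argument is correct and follows essentially the same approach as the paper: both reduce to the fact that a 6-edge-connected graph contains three edge-disjoint spanning trees (the paper cites Kundu, you derive it from the Nash--Williams--Tutte partition criterion), and then conclude via Theorem~\ref{mainthm}(b). Your detour through $H=T_1\cup T_2$ and Theorem~\ref{graphthm} is unnecessary but harmless, since once you have three edge-disjoint spanning trees the spanning subgraph $T_1\cup T_2\cup T_3$ is already minimally rigid in $X\oplus_\infty\mathbb{R}$ by Theorem~\ref{mainthm}(b) directly.
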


\begin{proof}
By \cite[Theorem 2]{kundu}, $G$ contains three  edge-disjoint spanning trees. Thus the result follows from Theorem \ref{mainthm}(b).
\end{proof}

Note that the 6-edge-connectivity condition in Corollary \ref{c:6edgeconn} is best possible. For example, the graph in Figure \ref{fig:34eg} has an edge-connectivity of 5 but only has 56 edges, one short of the required number given by Theorem \ref{mainthm}(b).

\section{Related work}
\label{S:Related}

\subsection{Algorithms for rigidity in cylindrical normed spaces}
\label{sec:matroidalg}

A \emph{matroid} is a pair $\mathcal{M} = (E,\mathcal{I})$ where $E$ is a finite set and $\mathcal{I}$ is a collection of subsets of $E$ such that the following three properties are satisfied:
\begin{enumerate}
    \item[(I1)] $\emptyset \in \mathcal{I}$.
    \item[(I2)] If $I \in \mathcal{I}$ and $J \subset I$, then $J \in \mathcal{I}$.
    \item[(I3)] If $I,J \in \mathcal{I}$ and $|J| < |I|$, then there exists $e \in I \setminus J$ such that $J \cup \{e\} \in \mathcal{I}$.
\end{enumerate}
The set $E$ is called the ground set and the elements of $\mathcal{I}$ are said to be independent.
 The \emph{row matroid} of a real matrix $M$ is the matroid $\mathcal{M} = (E, \mathcal{I})$ where $E$ is the set of row labels for $M$ and a subset $I \subseteq E$ is independent in $\mathcal{M}$ if and only if the corresponding rows of $M$ are linearly independent over $\mathbb{R}$.
The {\em graphic matroid} for a graph $G=(V,E)$ is the matroid $M(G)=(E,\mathcal{I})$ where the independent sets correspond to forests in $G$. Note that $M(G)$ is the row matroid for the incidence matrix of the graph $G$. 

Let $K_V =(V,K(V))$ be the complete graph on a finite vertex set $V$ and let $X$ be a finite dimensional real normed linear space. 
A placement $p\in X^V$  is \emph{completely regular} if the framework $(K_V, p)$, and every subframework of $(K_V, p)$, is regular.
Given a completely regular placement $p\in X^V$, we define the \emph{rigidity matroid} $R_X(V)$ to be the row matroid of the rigidity matrix $R(K_V,p)$ (see Section \ref{s:wp}).
Note that every completely regular placement $p\in X^V$ will generate the same rigidity matroid.
With this terminology,
a graph $G=(V,E)$ is independent in $X$ if and only if $E$ is independent in the rigidity matroid $R_X(V)$.
If $X$ is a generic space then  the set of completely regular placements of $K_V$ is a dense subset of $X^V$. This follows since the set of regular placements of a graph $G=(V,E)$ in $X$ is an open subset of the set of well-positioned placements of $K_V$ in $X$, which in turn is a dense subset of $X^V$; see \cite[Section 4.1]{dew1} for more details. 


Given two matroids $\mathcal{M}_1 = (E, \mathcal{I}_1)$ and $\mathcal{M}_2 = (E, \mathcal{I}_2)$ with the same ground set $E$,
define their \emph{matroid union} to be the matroid $\mathcal{M}_1 \vee \mathcal{M}_2 := (E,\mathcal{I})$ where,
\begin{align*}
    \mathcal{I} := \{ I_1 \cup I_2 : I_1 \in \mathcal{I}_1, ~ I_2 \in \mathcal{I}_2, ~ I_1 \cap I_2 = \emptyset \}.
\end{align*}

\begin{lemma}[{\cite[Lemma 7.6.14(1)]{Bry}}]\label{l:matroidunion}
    Let $\mathcal{M}_1 = (E, \mathcal{I}_1)$ and $\mathcal{M}_2 = (E, \mathcal{I}_2)$ be two matroids with the same ground set.
    Suppose that each $\mathcal{M}_i$ is the row matroid of a real $|E| \times n_i$ matrix $M_i$.
    Let $D$ be a real $|E| \times |E|$ diagonal matrix.
    If the diagonal entries of $D$ are algebraically independent over $\mathbb{Q}(M_1)$ (the smallest field containing the rational numbers and the matrix entries of $M_1$),
    then the matroid union $\mathcal{M}_1 \vee \mathcal{M}_2$ is the row matroid of the matrix $[ M_1 ~ ~ D M_2 ]$.
\end{lemma}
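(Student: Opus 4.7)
The plan is to identify the row matroid of $[M_1 \mid DM_2]$ with $\mathcal{M}_1 \vee \mathcal{M}_2$ by matching their independent sets. By the definition of matroid union recalled above, a set $I \subseteq E$ is independent in $\mathcal{M}_1 \vee \mathcal{M}_2$ if and only if it admits a partition $I = A \sqcup B$ with $A \in \mathcal{I}_1$ and $B \in \mathcal{I}_2$. So it suffices to prove that the rows of $[M_1 \mid DM_2]$ indexed by $I$ are linearly independent exactly when such a partition exists.

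The main step is a determinant calculation. For any $|I| \times |I|$ square submatrix $N$ of the rows-$I$ restriction of $[M_1 \mid DM_2]$, formed by selecting column sets $C_1 \subseteq \{1,\ldots,n_1\}$ and $C_2 \subseteq \{1,\ldots,n_2\}$ with $|C_1|+|C_2|=|I|$, I would apply the generalised Laplace expansion along the columns indexed by $C_1$, then pull the diagonal factor $D$ out of each complementary minor. This yields an expression of the form
\begin{align*}
\det(N) \;=\; \sum_{\substack{A \sqcup B = I \\ |A|=|C_1|}} (\pm 1)\, m_1(A,C_1)\, m_2(B,C_2) \prod_{e \in B} d_e,
\end{align*}
where $m_i(\cdot,\cdot)$ denotes the corresponding square minor of $M_i$. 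The crucial structural observation is that the monomials $\prod_{e\in B} d_e$ are pairwise distinct across partitions, so this is a polynomial in the variables $d_e$ whose coefficients lie in $\mathbb{Q}(M_1,M_2)$, and each monomial receives at most one contribution.

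The algebraic independence hypothesis enters here: it forces this polynomial, viewed as a field element, to vanish only when every coefficient does. Hence $\det(N) \neq 0$ if and only if there is a partition $I = A\sqcup B$ with both $m_1(A,C_1)\neq 0$ and $m_2(B,C_2)\neq 0$. Ranging over all valid $C_1$ and $C_2$, the rows of $[M_1 \mid DM_2]$ indexed by $I$ are linearly independent exactly when some partition $I = A\sqcup B$ makes the rows of $M_1$ in $A$ and the rows of $M_2$ in $B$ simultaneously independent, which is the desired characterisation.

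The main technical obstacle is that the coefficients of $\det(N)$ actually live in $\mathbb{Q}(M_1,M_2)$, while the hypothesis of the lemma only supplies algebraic independence of the $d_e$ over the smaller field $\mathbb{Q}(M_1)$. I would resolve this by observing that in the intended application the entries of $M_2$ are algebraic, indeed rational, over $\mathbb{Q}(M_1)$, so independence over $\mathbb{Q}(M_1)$ automatically extends to independence over $\mathbb{Q}(M_1,M_2)$ and the non-vanishing argument goes through; when this is not automatic, one can first replace $M_2$ by a representation of $\mathcal{M}_2$ with entries in $\mathbb{Q}(M_1)$ before carrying out the expansion above.
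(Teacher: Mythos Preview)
The paper does not prove this lemma; it is quoted verbatim from Brylawski \cite[Lemma 7.6.14(1)]{Bry} and used as a black box. So there is no ``paper's own proof'' to compare against.

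That said, your argument via the generalised Laplace expansion is the standard one and is correct in outline. The key identity
\[
\det(N) \;=\; \sum_{\substack{A \sqcup B = I \\ |A|=|C_1|}} (\pm 1)\, m_1(A,C_1)\, m_2(B,C_2) \prod_{e \in B} d_e
\]
is exactly right, since the $C_2$-block of $N$ restricted to rows $B$ is $D[B,B]\,M_2[B,C_2]$ and $D$ is diagonal. The observation that the squarefree monomials $\prod_{e\in B} d_e$ are pairwise distinct is also correct, and this is what makes the non-vanishing argument go through.

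You have also put your finger on a genuine subtlety in the lemma as stated: the hypothesis gives algebraic independence of the $d_e$ only over $\mathbb{Q}(M_1)$, yet the coefficients of $\det(N)$ as a polynomial in the $d_e$ lie in $\mathbb{Q}(M_1,M_2)$. Your first resolution is the right one here: in the paper's only application (Corollary~\ref{c:matroid} and the algorithm following it), $M_2$ is a directed incidence matrix with entries in $\{-1,0,1\}\subseteq\mathbb{Q}$, so $\mathbb{Q}(M_1,M_2)=\mathbb{Q}(M_1)$ and no extension is needed. More generally, algebraic independence is preserved under algebraic field extensions, so it suffices that the entries of $M_2$ be algebraic over $\mathbb{Q}(M_1)$. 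Your fallback suggestion --- replacing $M_2$ by a representation of $\mathcal{M}_2$ over $\mathbb{Q}(M_1)$ --- is not guaranteed to exist for an arbitrary $\mathbb{R}$-representable matroid, so that part should be dropped or qualified; but it is irrelevant to how the lemma is used in this paper.
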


The following matroidal characterisation of independence in cylindrical normed spaces is an immediate consequence of Theorem \ref{graphthm}.

\begin{corollary}\label{c:matroid}
    Let $X$ be a generic normed space of dimension at least $2$.
    A graph $G=(V,E)$ is independent in $X \oplus_\infty \mathbb{R}$ if and only if $E$ is independent in $R_X(V) \vee M(G)$.
\end{corollary}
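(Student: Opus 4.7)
The plan is to show that this corollary is essentially a matroid-theoretic reformulation of Theorem \ref{graphthm} in the case of independence. Everything reduces to unpacking the definition of matroid union and matching it to the edge-disjoint decomposition criterion.

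First I would recall that, by definition, a subset $E \subseteq K(V)$ is independent in $R_X(V) \vee M(G)$ (with both matroids understood on the common ground set obtained by restriction to $E$) if and only if $E$ can be written as a disjoint union $E = E_1 \sqcup E_2$ with $E_1$ independent in $R_X(V)$ and $E_2$ independent in $M(G)$. Next I would identify these matroid-theoretic conditions with graph-theoretic ones: $E_1$ is independent in $R_X(V)$ exactly when the spanning subgraph $H = (V, E_1)$ is independent in $X$ (this is the defining property of the rigidity matroid and is valid because $X$ is generic, so there exist completely regular placements and hence $R_X(V)$ is well-defined), and $E_2$ is independent in $M(G)$ exactly when the spanning subgraph $T = (V, E_2)$ is a forest.

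With these two reformulations in hand, the forward direction ($G$ independent in $X \oplus_\infty \mathbb{R}$ $\Rightarrow$ $E$ independent in $R_X(V) \vee M(G)$) follows immediately from the implication (i)$\Rightarrow$(ii) of Theorem \ref{graphthm}: the hypothesis supplies spanning subgraphs $H$ and $T$ with the required properties, and setting $E_1 := E(H)$, $E_2 := E(T)$ gives the required disjoint decomposition of $E$. Conversely, given a disjoint decomposition $E = E_1 \sqcup E_2$ witnessing independence in the matroid union, the spanning subgraphs $H = (V, E_1)$ and $T = (V, E_2)$ satisfy the hypotheses of the implication (ii)$\Rightarrow$(i) of Theorem \ref{graphthm} in the independence case, yielding that $G$ is independent in $X \oplus_\infty \mathbb{R}$.

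There is no genuine obstacle here; the main thing to verify is simply that the rigidity matroid $R_X(V)$ is well-defined and its independent sets correspond precisely to graphs which are independent in $X$. This is already discussed in the paragraph preceding the corollary and uses that $X$ is generic so that completely regular placements form a dense subset of $X^V$. Hence the proof is essentially a one-line appeal to Theorem \ref{graphthm} combined with the definition of matroid union.
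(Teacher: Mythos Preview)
Your proposal is correct and matches the paper's own approach: the paper states that the corollary is an immediate consequence of Theorem \ref{graphthm}, and your argument is precisely the unpacking of that immediate consequence via the definition of matroid union. Your care in noting that the two matroids are taken on the common ground set $E$ and that genericity of $X$ guarantees $R_X(V)$ is well-defined is appropriate and mirrors the surrounding discussion in the paper.
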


Using Lemma \ref{l:matroidunion} and Corollary \ref{c:matroid},
we now present an algorithm for determining whether a graph $G=(V,E)$ is independent in a cylindrical normed space $X \oplus_\infty \mathbb{R}$,
where $X$ is a generic normed space of dimension at least $2$.
\begin{enumerate}[(1)]
\item First, choose a placement $p$ of $G$ in $X$ and use this to construct the rigidity matrix $R(G,p)$ of the framework $(G,p)$ in $X$.
The placement $p$ should be chosen such that $R_X(V)$ is the row matroid of $(K_V,p)$ (where $K_V$ is the complete graph with vertex set $V$);
since $X$ is generic, a random choice of placement will almost surely satisfy this property.
\item For each edge $e \in E$ choose (i) an initial vertex (which we label $i(e)$) and (ii) a random real number $b_e$.
With this,
define $B(G)$ to be the $|E| \times |V|$ matrix with entries
\begin{align*}
    B(G)_{e,v} :=
    \begin{cases}
        b_e &\text{if $e =vw$ and $i(e) = v$},\\
        -b_e &\text{if $e =vw$ and $i(e) = w$},\\
        0 &\text{otherwise}.
    \end{cases}
\end{align*}
Importantly, the matrix $B(G)$ is formed from the directed incidence matrix of $G$ (with our arbitrary choice of edge directions) by multiplying each row $e$ by $b_e$.
\item Let  $M = [R(G,p) ~ B(G)]$ be the $|E| \times (d+1)|V|$ matrix with matrix $R(G,p)$ forming the left columns and matrix $B(G)$ forming the right columns. It follows from Lemma \ref{l:matroidunion} that $G$ is independent in $X \oplus_\infty \mathbb{R}$  if $\rank M = |E|$, and $G$ is almost surely dependent in $X \oplus_\infty \mathbb{R}$ if $\rank M < |E|$.
\end{enumerate}

If the computational speed of determining the support functionals of the chosen framework is polynomial (for example, if $X$ is an $\ell_p$ space),
then the above algorithm  will run in polynomial time.
The algorithm is, however, not deterministic.
This can, in some cases, be solved by the use of \emph{Edmond's algorithm} \cite{edmond}; see \cite[Algorithm 11.1]{matroidunion} for a description of the general algorithm.
The algorithm takes any pair of matroids $\mathcal{M}_1 = (E, \mathcal{I}_1), \mathcal{M}_2 = (E, \mathcal{I}_2)$ with maximal independent subsets $B_1,B_2$ respectively and returns a maximal independent subset of $ \mathcal{M}_1 \vee \mathcal{M}_2$.
See Algorithm \ref{alg1} (Appendix \ref{sec:alg}) for the specific application of Edmond's algorithm to a graph $G=(V,E)$ and the cylindrical normed space $X \oplus_\infty \mathbb{R}$, with $X$ a generic normed space.

If independence in the rigidity matroid $R_X(V)$ can be determined by a deterministic algorithm then Algorithm \ref{alg1} will also be deterministic.
Furthermore,
if independence in $R_X(V)$ can be checked in polynomial time then Algorithm \ref{alg1} will run in polynomial time \cite[Section 11.3.3]{matroidunion}.
For example,
if $X$ is a generic rigidity space with dimension 2,
then a direct consequence of Theorem \ref{t:2d} tells us we can check whether an edge set is independent in $R_X(V)$ with the $(2,k)$ pebble game algorithm (see \cite{LeeStreinu}) with $k=3$ if $X$ is isometrically isomorphic to $\mathbb{R}^2$ and $k=2$ otherwise.
Since the $(2,k)$ pebble game algorithm runs in polynomial time,
Algorithm \ref{alg1} is a polynomial-time deterministic algorithm whenever $X$ is a generic space with dimension 2.

\subsection{Graph rigidity in conical normed spaces}
\label{s:conical}
A normed linear space is said to be {\em conical} if it is isometrically isomorphic to a direct sum $Z =X\oplus_1 \mathbb{R}$ where $X$ is a finite dimensional real  normed linear space and $Z$ is endowed with the norm $\|(x,y)\|_1 :=  \|x\|_X+|y|$.
Recall that the dual space of a conical normed space $X\oplus_1\mathbb{R}$  is isometrically isomorphic to the cylindrical normed space $X^*\oplus_\infty \mathbb{R}$.



\begin{lemma}
\label{l:diff2}
Let $Z=X\oplus_1 \mathbb{R}$  be a conical normed space and let $z=(x,y)\in Z$. 
\begin{enumerate}[(i)]
\item $z$ is a smooth point in $Z$ if and only if $x$ is a smooth point in $X$ and $y$ is non-zero. 
\item If $z$ is a smooth point in $Z$ then the linear functional,
\[
\psi:Z\to\mathbb{R},\quad
\psi(a,b) = 
\left\{
\begin{array}{ll}
\left(\frac{\varphi_x(a)}{\|x\|_X}+b\right) \|z\|_Z   &  \mbox{if } y > 0,\\[8pt]
\left(\frac{\varphi_x(a)}{\|x\|_X}-b\right) \|z\|_Z   &  \mbox{if } y < 0.
\end{array}
\right.
\]
is the unique support functional for $z$.
\end{enumerate}
\end{lemma}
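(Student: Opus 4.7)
The plan is to mirror the approach used for Lemma \ref{l:diff} in the cylindrical setting, reducing everything to the one-sided differentiability criterion for smoothness stated just before Lemma \ref{l:diff}. The starting observation is that for $z=(x,y)$ and $u=(a,b)$ in $Z=X\oplus_1\mathbb{R}$, we have the exact identity
\[
\rho_u(t) := \|z+tu\|_Z = \|x+ta\|_X + |y+tb|
\]
for all $t\in\mathbb{R}$, so the question of whether $\rho_u$ is differentiable at $t=0$ splits cleanly into two independent one-dimensional sub-problems, one involving the norm on $X$ and the other involving the absolute value on $\mathbb{R}$.

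For part (i), I would argue both directions via this decomposition. For the forward direction, assume $z$ is smooth. Testing with the directions $u=(0,b)$ shows that $t\mapsto |y+tb|$ must be differentiable at $t=0$ for every $b\in\mathbb{R}$, which forces $y\neq 0$. Testing with directions $u=(a,0)$ shows that $t\mapsto \|x+ta\|_X$ must be differentiable at $t=0$ for every $a\in X$; because $\|ta\|_X=|t|\,\|a\|_X$ is not differentiable at $0$ for any $a\neq 0$, this rules out $x=0$, and then the characterisation of smoothness in $X$ recalled in the excerpt forces $x$ to be a smooth point of $X$. The converse is immediate: if $x$ is smooth in $X$ and $y\neq 0$, both summands of $\rho_u$ are differentiable at $t=0$ for every $u$, so $\rho_u$ is too, and hence $z$ is smooth in $Z$.

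For part (ii), I would compute $\rho_u'(0)$ explicitly using the identity $\rho_u'(0)=\varphi_x(a)/\|x\|_X$ on the $X$-component (rearranged from the formula $\varphi_x(u)=\rho_u'(0)\|x\|_X$ recalled in the excerpt) together with the elementary derivative $\sgn(y)\,b$ on the absolute-value component. The general identity $\psi(u)=\rho_u'(0)\,\|z\|_Z$, applied with $\|z\|_Z=\|x\|_X+|y|$, then reproduces precisely the two-branched formula in the statement, with the sign of $y$ selecting between $+b$ and $-b$. Uniqueness of $\psi$ is inherited from the uniqueness of $\varphi_x$ and the fact that any support functional satisfies $\psi(u)=\rho_u'(0)\|z\|_Z$ at a point of differentiability.

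The content is essentially a careful book-keeping exercise and I do not expect any serious obstacle. The one place where I would take care is the exclusion of the degenerate case $x=0$, $y\neq 0$: here $z$ is non-zero yet still fails to be smooth, and the argument depends crucially on the fact that $t\mapsto|t|\,\|a\|_X$ is not differentiable at the origin whenever $a\neq 0$, so that smoothness of $z$ in $Z$ genuinely forces $x$ to be a (non-zero) smooth point of $X$ and not merely a point possessing some support functional in a degenerate sense.
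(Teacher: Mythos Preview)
Your proposal is correct and, for part~(i), takes the same route as the paper: both arguments hinge on the identity $\|z+tu\|_Z=\|x+ta\|_X+|y+tb|$ and read off differentiability componentwise, though the paper leaves the details implicit while you spell them out (including the useful observation that $x=0$ is excluded).

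For part~(ii) there is a mild difference worth noting. You derive the formula for $\psi$ constructively, by computing $\rho_u'(0)=\varphi_x(a)/\|x\|_X+\sgn(y)\,b$ and invoking the identity $\varphi_z(u)=\rho_u'(0)\|z\|_Z$. The paper instead takes the stated $\psi$ as given and verifies the two defining properties of a support functional directly: it checks $\psi(z)=\|z\|_Z^2$ using $\varphi_x(x)=\|x\|_X^2$, and bounds $|\psi(u)|\le(\|a\|_X+|b|)\|z\|_Z=\|u\|_Z\|z\|_Z$ using $|\varphi_x(a)|\le\|x\|_X\|a\|_X$, whence $\|\psi\|_{Z^*}=\|z\|_Z$. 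Your approach explains where the formula comes from; the paper's is a clean post-hoc check that avoids any appeal to differentiability once the candidate is written down. Either is perfectly adequate here.
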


\proof
$(i)$ For each  $u=(a,b)\in Z$, note that
$\|z+tu\|_Z = \|x+ta\|_X+|y+tb|$ for all $t$.

$(ii)$
By $(i)$, $y \neq 0$. 
Note that 
\[\psi(z) = \left(\frac{\varphi_x(x)}{\|x\|_X}+|y|\right)\|z\|_Z = (\|x\|_X+|y|)\|z\|_Z=\|z\|_Z^2.\]
Also, for each  $u=(a,b)\in Z$,  
\[|\psi(u)|\leq\left(\frac{|\varphi_x(a)|}{\|x\|_X}+|b|\right)\|z\|_Z\leq (\|a\|_X+|b|)\|z\|_Z = \|u\|_Z\|z\|_Z\]
Thus $\|\psi\|^*=\|z\|_Z$ and so $\psi=\varphi_z$ is the unique support functional for $z$.

\endproof

\begin{example}

Consider the conical normed space  $\ell_{q,1}^d=\ell_q^d\oplus_1 \mathbb{R}$ where $q\in[1,\infty)$ and $d\geq 1$. Let $z=(x,y)\in \ell_q^d\oplus_1 \mathbb{R}$ be a smooth point  with unique support functional $\varphi_z$ and write $x=(x_1,\ldots, x_d)$. 
Let $u=(a,b)\in\ell_q^d\oplus_1 \mathbb{R}$ and write $a=(a_1,\ldots,a_d)$.
If $y>0$ then, by Lemma \ref{l:diff2}(ii),
\[\frac{\varphi_z(u)}{\|z\|_Z}  =  \left(\sum_{i=1}^d \frac{\sgn(x_i)|x_i|^{q-1}}{\|x\|_q^{q-1}}a_i\right)+b= \begin{bmatrix}\frac{\sgn(x_1)|x_1|^{q-1}}{\|x\|_q^{q-1}}&\cdots&\frac{\sgn(x_d)|x_d|^{q-1}}{\|x\|_q^{q-1}}&1\end{bmatrix}\begin{bmatrix}a_1\\\vdots\\a_d\\b\end{bmatrix}\]
If $y<0$ then, by Lemma \ref{l:diff2}(ii),
\[
\frac{\varphi_z(u)}{\|z\|_Z}  =  \left(\sum_{i=1}^d \frac{\sgn(x_i)|x_i|^{q-1}}{\|x\|_q^{q-1}}a_i\right)-b= \begin{bmatrix}\frac{\sgn(x_1)|x_1|^{q-1}}{\|x\|_q^{q-1}}&\cdots&\frac{\sgn(x_d)|x_d|^{q-1}}{\|x\|_q^{q-1}}&-1\end{bmatrix}\begin{bmatrix}a_1\\\vdots\\a_d\\b\end{bmatrix}\]
The conical normed space  $\ell_{2,1}^2=\mathbb{R}^2\oplus_1 \mathbb{R}$ is of particular interest. Here, if $y>0$ then,
$\frac{\varphi_z(u)}{\|z\|_Z} = \begin{bmatrix}\frac{x_1}{\|x\|_2} &\frac{x_2}{\|x\|_2}&1\end{bmatrix}\begin{bmatrix}a_1\\a_2\\b\end{bmatrix}$
and if  $y<0$ then,
$\frac{\varphi_z(u)}{\|z\|_Z} = \begin{bmatrix}\frac{x_1}{\|x\|_2} &\frac{x_2}{\|x\|_2}&-1\end{bmatrix}\begin{bmatrix}a_1\\a_2\\b\end{bmatrix}$.
\end{example}

\begin{example}
Let $\mathcal{H}_\infty(2,\mathbb{R})$ denote the real linear space of $2\times 2$ real symmetric matrices endowed with the spectral norm and let $\mathcal{H}_\infty(2,\mathbb{C})$ denote the real linear space of $2\times 2$ complex hermitian  matrices also endowed with the spectral norm.
Recall that $\mathcal{H}_\infty(2,\mathbb{K})$ is isometrically isomorphic to  the dual of $\mathcal{H}_1(2,\mathbb{K})$ for $\mathbb{K}=\mathbb{R}$ or $\mathbb{C}$. As noted in Example \ref{ex:matrixnorms}, $\mathcal{H}_1(2,\mathbb{R})$ is a cylindrical normed space which is isometrically isomorphic to $\ell_{2,\infty}^3 = \mathbb{R}^2 \oplus_\infty \mathbb{R}$. Thus $\mathcal{H}_\infty(2,\mathbb{R})$ is a conical normed space as it  is isometrically isomorphic to  
$\ell_{2,1}^3 = \mathbb{R}^2 \oplus_1 \mathbb{R}$.
Similarly, $\mathcal{H}_\infty(2,\mathbb{C})$ is a conical normed space as it is isometrically isomorphic to  $\ell_{2,\infty}^4 = \mathbb{R}^3 \oplus_1 \mathbb{R}$.
\end{example}

Let $(G,p)$ be a  framework in a conical normed space $Z =X \oplus_1 \mathbb{R}$.
As before, we denote by $\pi_{X}$ and $\pi_{\mathbb{R}}$ the projections  from $Z$ onto $X$ and $\mathbb{R}$ respectively (see Equation \ref{eq:natproj}) and we define the projected frameworks $(G,p_{X})$ and $(G,p_{\mathbb{R}})$ accordingly.

\begin{lemma}\label{lem:wp2}
Let $(G,p)$ be a framework in a conical normed space $X\oplus_1 \mathbb{R}$.
The following statements are equivalent.
\begin{enumerate}[(i)]
    \item $(G,p)$ is well-positioned in $X\oplus_1 \mathbb{R}$.
    \item The projected framework $(G,p_X)$ is well-positioned in $X$ and $\pi_\mathbb{R}(p_v)\not=\pi_\mathbb{R}(p_w)$ for each edge $vw\in E$.
\end{enumerate}
\end{lemma}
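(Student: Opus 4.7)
The plan is to reduce the claim to two ingredients that are already in hand: the smoothness-based characterisation of well-positionedness (Lemma \ref{lem:wp}(i)$\Leftrightarrow$(ii), cited from \cite[Proposition 6]{kit-sch}), which holds in any finite-dimensional normed space and therefore applies both to $X$ and to $X\oplus_1\mathbb{R}$, and the pointwise description of smoothness in a conical space given by Lemma \ref{l:diff2}(i).

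First I would apply \cite[Proposition 6]{kit-sch} in $X\oplus_1\mathbb{R}$ to rephrase condition (i) as: $p_v-p_w$ is a smooth point of $X\oplus_1\mathbb{R}$ for every edge $vw\in E$. Similarly I would apply the same proposition in $X$ to rewrite the first half of condition (ii) as: $\pi_X(p_v)-\pi_X(p_w)$ is a smooth point of $X$ for every edge $vw\in E$.

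Next, for each edge $vw\in E$, I would apply Lemma \ref{l:diff2}(i) to the vector
\[z = p_v - p_w = \bigl(\pi_X(p_v)-\pi_X(p_w),\ \pi_{\mathbb{R}}(p_v)-\pi_{\mathbb{R}}(p_w)\bigr)\in X\oplus_1\mathbb{R}.\]
The lemma asserts that $z$ is smooth in $X\oplus_1\mathbb{R}$ if and only if $\pi_X(p_v)-\pi_X(p_w)$ is smooth in $X$ \emph{and} $\pi_{\mathbb{R}}(p_v)-\pi_{\mathbb{R}}(p_w)\neq 0$. Ranging over all edges $vw\in E$ and combining with the reformulations of (i) and (ii) from the previous paragraph immediately yields the equivalence.

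I do not anticipate any real obstacle: the entire argument is a two-line composition of existing results, directly mirroring the pattern used to establish the analogous statement for cylindrical spaces in Lemma \ref{lem:wp}. The only point to be careful about is that the smoothness of $\pi_X(p_v)-\pi_X(p_w)$ already forces this vector to be non-zero, so $(G,p_X)$ is automatically a genuine framework in $X$ whenever the right-hand condition in (ii) holds, and no separate non-degeneracy argument is required.
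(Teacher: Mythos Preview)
Your proposal is correct and follows exactly the same approach as the paper, which simply says ``Apply \cite[Proposition 6]{kit-sch} and Lemma \ref{l:diff2}.'' Your write-up just unpacks these two citations edge by edge.
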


\proof
Apply \cite[Proposition 6]{kit-sch} and Lemma \ref{l:diff2}.
\endproof

Given a fixed orientation $\delta$ of the edges of a graph $G=(V,E)$,
we denote by $I(G,\delta)$ the directed incidence matrix,
i.e., the $|E| \times |V|$ matrix with entries $a_{e,v}$ where
\begin{align*}
    a_{e,v} :=
    \begin{cases}
        1 &\text{if } e = vw \text{ and $e$ is directed from $v$ to $w$ with respect to $\delta$},\\
        -1 &\text{if } e = vw \text{ and $e$ is directed from $w$ to $v$ with respect to $\delta$},\\
        0 &\text{otherwise.}
    \end{cases}
\end{align*}
Given a framework $(G,q)$ in a normed space $X$ and edge orientation $\delta$ on $G$  
we define the matrices,
    \begin{align*}
        D(G,q,\delta) := D(G,q)I(G,\delta), \qquad       
        M(G,q,\delta) := \Big[ R(G,q) \quad D(G,q,\delta) \Big].
    \end{align*}
(Recall that here $D(G,q)$ is the diagonal matrix with rows and columns indexed by $E$ and $(vw,vw)$-entry $\|q_v-q_w\|_X$ for each edge $vw\in E$.)
We will require the following result of Cros, Amblard, Prieur and Da Rocha (\cite{CAPR}).

\begin{theorem}[{\cite[Theorem 4.2]{CAPR}}]
\label{t:capr}
Let $(G,q)$ be a framework in $\mathbb{R}^d$ and let $\delta$ be an edge orientation on $G$.
    If the coordinates of $q$ are algebraically independent over $\mathbb{Q}$ and $|V|\geq d+1$ then the following statements are equivalent.
\begin{enumerate}[(i)]
\item $\rank M(G,q,\delta) = (d+1)|V| - \binom{d+1}{2} - 1$.
\item $G$ contains edge-disjoint spanning subgraphs $H$ and $T$ such that $H$ is rigid in $\mathbb{R}^d$ and $T$ is a tree.
\end{enumerate}
\end{theorem}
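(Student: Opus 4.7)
The plan is to identify $M(G,q,\delta)$, up to nonzero row rescalings, with the rigidity matrix $R(G,p)$ of a generic framework in the conical normed space $\mathbb{R}^d\oplus_1\mathbb{R}$, and then to characterise the maximum rank of that rigidity matrix via a matroid-union identification. Both implications (i) $\Rightarrow$ (ii) and (ii) $\Rightarrow$ (i) then fall out simultaneously.

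First I would observe that reversing the direction of any edge in $\delta$ merely negates the corresponding row of $M(G,q,\delta)$, so without loss of generality $\delta$ may be taken to be acyclic. I then pick $y\in\mathbb{R}^V$ with coordinates algebraically independent over $\mathbb{Q}(q)$ and with $\sgn(y_v-y_w)$ realising $\delta$ on every edge $vw$, and set $p_v:=(q_v,y_v)\in\mathbb{R}^d\oplus_1\mathbb{R}$. By Lemma \ref{lem:wp2} the framework $(G,p)$ is well-positioned, and by Lemma \ref{l:diff2}(ii) the row of $R(G,p)$ indexed by an edge $vw$ is precisely the positive scalar $\|p_v-p_w\|_1/\|q_v-q_w\|_2$ times the corresponding row of $M(G,q,\delta)$. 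Hence $\rank M(G,q,\delta)=\rank R(G,p)$.

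Next I would compute the maximum possible value of $\rank R(G,p)$. The trivial infinitesimal flex space $\mathcal{T}(\mathbb{R}^d\oplus_1\mathbb{R})$ is spanned by the $d+1$ ambient translations and the $\binom{d}{2}$ Euclidean rotations of the first factor (no further affine isometry is compatible with the $\ell_1$ sum), so $\dim \mathcal{T}(\mathbb{R}^d\oplus_1\mathbb{R})=\binom{d+1}{2}+1$; the assumption $|V|\geq d+1$ and the general position of $q$ guarantee linear independence of these flexes at $p$. Thus $\rank R(G,p)\leq(d+1)|V|-\binom{d+1}{2}-1$, with equality iff $(G,p)$ is infinitesimally rigid in $\mathbb{R}^d\oplus_1\mathbb{R}$. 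The key combinatorial step is then to rescale the rows of $R(G,p)$ into a block form $[A\mid D\cdot I(G,\delta)]$, with $A$ representing the Euclidean rigidity matroid $R_{\mathbb{R}^d}(V)$ and $D$ a diagonal factor whose entries can be arranged --- thanks to the generic choice of $y$ over $\mathbb{Q}(q)$ --- to be algebraically independent over $\mathbb{Q}(A)$. Lemma \ref{l:matroidunion} then identifies the row matroid of $R(G,p)$ with the matroid union $R_{\mathbb{R}^d}(V)\vee M(G)$, so $\rank M(G,q,\delta)=r_{R_{\mathbb{R}^d}(V)\vee M(G)}(E)$. By the standard matroid-union rank formula --- the maximum of $|E_1|+|E_2|$ over disjoint $E_1,E_2\subseteq E$ with $E_1$ independent in $R_{\mathbb{R}^d}(V)$ and $E_2$ a forest of $G$ --- this rank attains the target value $(d+1)|V|-\binom{d+1}{2}-1$ iff $G$ admits an edge-disjoint packing of a spanning $\mathbb{R}^d$-rigid subgraph $H$ and a spanning tree $T$, which is exactly condition (ii).

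The main obstacle is verifying the algebraic-independence hypothesis of Lemma \ref{l:matroidunion}: the natural scalars $\|p_v-p_w\|_1$, $\|q_v-q_w\|_2$, and $\sgn(y_v-y_w)$ that arise in the rescaling of $R(G,p)$ are not themselves algebraically independent over $\mathbb{Q}(R(G,q))$, so some care is needed in partitioning these factors between $A$ and $D$ so that the effective diagonal is genuinely generic. An alternative route that sidesteps this technicality is to invoke the cylindrical-conical duality flagged in the abstract of the paper, which transfers the statement directly to Theorem \ref{graphthm} applied to the dual cylindrical space $\mathbb{R}^d\oplus_\infty\mathbb{R}$, where precisely the same decomposition characterises rigidity.
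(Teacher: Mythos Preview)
The paper does not prove Theorem \ref{t:capr}; it is quoted from \cite{CAPR} and used as a black box (in the proofs of Theorem \ref{t:l1prod} and Lemma \ref{l:generic}). So there is no ``paper's own proof'' to compare against, and your task was effectively to supply a proof the authors chose to import.

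Your main line of argument is sound in spirit but incomplete exactly where you say it is. The identity $\rank M(G,q,\delta)=\rank R(G,p)$ for a well-positioned conical placement $p=(q,y)$ is correct (this is Lemma \ref{l:l1prod}), and the trivial-flex count giving the upper bound $(d{+}1)|V|-\binom{d+1}{2}-1$ is fine. The gap is the matroid-union step: in the block form $[R(G,q)\;\;D(G,q)I(G,\delta)]$ the diagonal entries $\|q_v-q_w\|_2$ are algebraic over $\mathbb{Q}(R(G,q))$, so Lemma \ref{l:matroidunion} does not apply, and no amount of row rescaling fixes this, since rescaling a row multiplies the left and right blocks by the \emph{same} scalar and therefore cannot make the effective diagonal generic over the left block. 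Your hope that the freedom in $y$ helps is illusory here: after the row-scaling identification, $y$ has disappeared from $M(G,q,\delta)$ entirely.

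Your fallback route is circular in this paper's logical structure. The only ``cylindrical--conical duality'' the paper establishes is Theorem \ref{t:l1prod}, and its proof of the equivalence (i)$\Leftrightarrow$(iii) invokes Theorem \ref{t:capr} directly; likewise Lemma \ref{l:generic} (genericity of $\mathbb{R}^d\oplus_1\mathbb{R}$) also uses Theorem \ref{t:capr}. So you cannot appeal to either of these, nor to any abstract norm-duality transfer of rigidity (no such result is proved or cited here), to deduce Theorem \ref{t:capr} from Theorem \ref{graphthm}. A genuine proof must either follow \cite{CAPR} or establish independently that the row matroid of $[R(G,q)\;\;D(G,q)I(G,\delta)]$ coincides with $R_{\mathbb{R}^d}(V)\vee M(G)$ for algebraically generic $q$, which is precisely the content you have not supplied.
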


If $(G,p)$ is a well-positioned framework in a conical normed space $X\oplus_1\mathbb{R}$ then we define $\delta_p$ to be the edge orientation on $G$ where an edge $vw$ is directed from $v$ to $w$ if and only if $\pi_{\mathbb{R}}(p_v)>\pi_{\mathbb{R}}(p_w)$.
    
 \begin{lemma}\label{l:l1prod}
     Let $(G,p)$ be a well-positioned framework in a conical normed space $X \oplus_1 \mathbb{R}$.
     Then $\rank R(G,p) = \rank M(G,p_X,\delta_p)$.
 \end{lemma}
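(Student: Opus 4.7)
The plan is to show that $R(G,p)$ can be obtained from $M(G,p_X,\delta_p)$ (or vice versa) by left-multiplication by a nonsingular diagonal matrix, so that the two matrices have the same rank.

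First I would write out explicitly, row by row, the form of $R(G,p)$ using Lemma \ref{l:diff2}(ii). For an edge $e=vw$, set $x_e=\pi_X(p_v-p_w)$ and $y_e=\pi_{\mathbb{R}}(p_v-p_w)$. Since $(G,p)$ is well-positioned, Lemma \ref{lem:wp2} gives that $x_e$ is smooth in $X$ (hence nonzero) and $y_e\neq 0$. Representing $\varphi_{p_v-p_w}$ as the $1\times(d+1)$ row functional
\[
\varphi_{p_v-p_w} \;=\; \|p_v-p_w\|_Z\Bigl[\tfrac{1}{\|x_e\|_X}\varphi_{x_e}\;\;\sgn(y_e)\Bigr],
\]
the $e$-row of $R(G,p)$ places this block at the columns of $v$ and its negative at the columns of $w$.

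Next I would rescale each row of $R(G,p)$ by the nonzero factor $\|x_e\|_X/\|p_v-p_w\|_Z$; since this amounts to left-multiplication by a nonsingular diagonal matrix, it preserves rank. In the rescaled matrix, the first $d|V|$ columns (the $X$-coordinate blocks) have rows with $\varphi_{x_e}$ at $v$ and $-\varphi_{x_e}$ at $w$, which is exactly the $e$-row of $R(G,p_X)$. The last $|V|$ columns (the $\mathbb{R}$-coordinate entries) have rows with $\|x_e\|_X\sgn(y_e)$ at $v$ and $-\|x_e\|_X\sgn(y_e)$ at $w$.

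The only step requiring care is matching this last block to $D(G,p_X,\delta_p)=D(G,p_X)I(G,\delta_p)$. By the definition of $\delta_p$, the edge $e=vw$ is directed from $v$ to $w$ precisely when $\pi_{\mathbb{R}}(p_v)>\pi_{\mathbb{R}}(p_w)$, i.e.\ when $\sgn(y_e)=+1$; a direct check in both cases $\sgn(y_e)=\pm1$ shows that $\sgn(y_e)$ times the $e$-row of the (undirected) pattern agrees with the $e$-row of $I(G,\delta_p)$. Hence the $\mathbb{R}$-columns of the rescaled matrix equal $D(G,p_X)I(G,\delta_p)=D(G,p_X,\delta_p)$, and the rescaled matrix coincides with $M(G,p_X,\delta_p)=[R(G,p_X)\;\;D(G,p_X,\delta_p)]$. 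The equality of ranks then follows immediately. The main obstacle, if any, is bookkeeping the sign conventions between $\sgn(y_e)$ and the chosen orientation $\delta_p$; no deep geometry is required, only careful matching of the support-functional formula with the combinatorial definition of $\delta_p$.
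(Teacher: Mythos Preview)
Your proposal is correct and is precisely the detailed unpacking of what the paper intends: its proof is the single sentence ``The result follows from Lemma~\ref{l:diff2}.''  Your row-by-row computation using the support-functional formula of Lemma~\ref{l:diff2}(ii), together with the diagonal rescaling and the sign check against $\delta_p$, is exactly how one makes that sentence precise; the only cosmetic point is that the rescaled $R(G,p)$ agrees with $M(G,p_X,\delta_p)$ after a column permutation (grouping all $X$-coordinates before all $\mathbb{R}$-coordinates), which of course does not affect the rank.
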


 \proof
 The result follows from Lemma \ref{l:diff2}. 
 \endproof



We are now ready to prove an equivalence between rigidity in the cylindrical normed space $X\oplus_\infty\mathbb{R}$ and rigidity in the conical normed space $X\oplus_1\mathbb{R}$ when $X = \mathbb{R}^d$.

\begin{theorem}\label{t:l1prod}
    For any graph $G=(V,E)$ with $|V|\geq d+1$,
    the following are equivalent.
    \begin{enumerate}[(i)]
        \item $G$ is minimally rigid in the conical space $\mathbb{R}^d \oplus_1 \mathbb{R}$.
        \item $G$ is minimally rigid in the cylindrical space $\mathbb{R}^d \oplus_\infty \mathbb{R}$.
        \item $G$ is an edge-disjoint union of spanning subgraphs $H$ and $T$, where $H$ is minimally rigid in $\mathbb{R}^d$ and $T$ is a tree.
    \end{enumerate}
\end{theorem}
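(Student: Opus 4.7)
The plan is to use condition (iii) as a common pivot: (ii) $\Leftrightarrow$ (iii) will follow from Theorem \ref{graphthm}, while (i) $\Leftrightarrow$ (iii) will follow by linking the conical rigidity matrix to the augmented Euclidean matrix $M(G,q,\delta)$ via Lemma \ref{l:l1prod} and then invoking Theorem \ref{t:capr}. For (ii) $\Leftrightarrow$ (iii) I would apply Theorem \ref{graphthm} directly with $X = \mathbb{R}^d$ for $d \geq 2$; the case $d = 1$ collapses because $\mathbb{R} \oplus_1 \mathbb{R}$ and $\mathbb{R} \oplus_\infty \mathbb{R}$ are both isometrically isomorphic to the $\ell_\infty$-plane, for which minimal rigidity coincides with $(2,2)$-tightness by Theorem \ref{t:2d}(b) and hence, by Theorem \ref{thm:nashwill}, with a decomposition into two spanning trees. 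Note that in either case (iii) forces
\[
|E| = \Big(d|V| - \binom{d+1}{2}\Big) + (|V| - 1) = (d+1)|V| - \binom{d+1}{2} - 1.
\]

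For (iii) $\Rightarrow$ (i) I would pick $p_X \in (\mathbb{R}^d)^V$ with coordinates algebraically independent over $\mathbb{Q}$ and extend it to $p \in (\mathbb{R}^d \oplus_1 \mathbb{R})^V$ by assigning last coordinates that differ along every edge; Lemma \ref{lem:wp2} then ensures $(G,p)$ is well-positioned. Combining Lemma \ref{l:l1prod} with Theorem \ref{t:capr} yields
\[
\rank R(G,p) = \rank M(G, p_X, \delta_p) = (d+1)|V| - \binom{d+1}{2} - 1 = |E|,
\]
so $(G,p)$ is independent. To upgrade to infinitesimal rigidity I would verify that $\dim \mathcal{T}(\mathbb{R}^d \oplus_1 \mathbb{R}) = \binom{d+1}{2} + 1$: the Euclidean rotations of the $\mathbb{R}^d$ factor (contributing $\binom{d}{2}$) together with translations in every coordinate (contributing $d+1$) exhaust the possible infinitesimal isometries, since the product norm restricts to the Euclidean norm on $\mathbb{R}^d \times \{0\}$ (forcing orthogonal action on the first factor) and to $|\cdot|$ on the last axis (forcing sign-preserving action there). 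The inclusion $\mathcal{T}(G,p) \subseteq \ker df_G(p)$ then becomes an equality by dimensions.

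For (i) $\Rightarrow$ (iii) I would start from a minimally infinitesimally rigid witness $(G,p)$ in $\mathbb{R}^d \oplus_1 \mathbb{R}$. Since $G$ is rigid it must be connected, so the subadditivity $\rank M(G,q,\delta) \leq \rank R(G,q) + \rank D(G,q,\delta) \leq (d|V|-\binom{d+1}{2}) + (|V|-1)$ caps the maximum row rank of $M(G,\cdot,\delta)$ at $(d+1)|V| - \binom{d+1}{2} - 1$. Combining the rigidity bound $|E| = \rank R(G,p) = (d+1)|V| - \dim \mathcal{T}(G,p) \geq (d+1)|V| - \binom{d+1}{2} - 1$ with the opposite inequality $|E| \leq (d+1)|V| - \binom{d+1}{2} - 1$ that comes from Lemma \ref{l:l1prod} and the row-rank cap forces $|E| = (d+1)|V| - \binom{d+1}{2} - 1$ and identifies this value as the maximum row rank of $M(G,\cdot,\delta)$. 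Since reversing an edge orientation merely negates a row of $D(G,q,\delta)$, this maximum is independent of $\delta$, and the set of $q$ achieving it is a non-empty Zariski open subset of $(\mathbb{R}^d)^V$ which therefore contains every $q$ with algebraically independent coordinates. Theorem \ref{t:capr} applied to any such $q$ then delivers (iii). The main obstacle I anticipate is pinning down $\dim \mathcal{T}(\mathbb{R}^d \oplus_1 \mathbb{R}) = \binom{d+1}{2} + 1$ alongside the companion row-rank bound, as both are needed to align the Euclidean rigidity count appearing in Theorem \ref{t:capr} with the minimally rigid edge count in $\mathbb{R}^d \oplus_1 \mathbb{R}$.
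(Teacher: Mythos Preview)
Your proposal is correct and follows the same approach as the paper: (ii)$\Leftrightarrow$(iii) via Theorem~\ref{graphthm}, and (i)$\Leftrightarrow$(iii) by linking $\rank R(G,p)$ to $\rank M(G,p_X,\delta_p)$ through Lemma~\ref{l:l1prod}, computing $\dim\mathcal{T}(\mathbb{R}^d\oplus_1\mathbb{R})=\binom{d+1}{2}+1$, and invoking Theorem~\ref{t:capr}. The paper's proof is considerably terser---it simply fixes one algebraically independent $p$ and asserts the equivalence---whereas you spell out the edge-count and rank bounds and separately dispose of the case $d=1$ (which Theorem~\ref{graphthm} does not cover); one small caveat is that your ``Zariski open'' claim for the maximal-rank locus of $M(G,q,\delta)$ is not literally correct, since the $D(G,q)I(G,\delta)$ block carries the non-polynomial entries $\pm\|q_v-q_w\|$, but the paper glosses over exactly the same regularity point.
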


\begin{proof}
$(i)\Leftrightarrow (iii)$
    Fix a placement $p$ of $G$ in $\mathbb{R}^d \oplus_1 \mathbb{R}$ such that the coordinates of $p$ are algebraically independent over $\mathbb{Q}$.
    Then $(G,p)$ is well-positioned, and by Lemma \ref{l:l1prod}, $\rank R(G,p) = \rank M(G,p_{\mathbb{R}^d},\delta_p)$.
    Note  that $\dim \mathcal{T}(G,p) = \dim \mathcal{T}(\mathbb{R}^d \oplus_1 \mathbb{R}) = \frac{d(d+1)}{2}+1$. 
    Hence the result follows by Theorem \ref{t:capr}.

 $(ii)\Leftrightarrow (iii)$   This is Theorem \ref{graphthm}.
\end{proof}

Using Theorem \ref{t:l1prod} and the results of Section \ref{Sec:Rigidity}, we now characterise minimally rigid graphs in the 4-dimensional cylindrical normed space $(\mathbb{R}^2 \oplus_1 \mathbb{R}) \oplus_\infty \mathbb{R}$.

\begin{lemma}
\label{l:generic}
    $\mathbb{R}^d \oplus_1 \mathbb{R}$ is a generic space.    
\end{lemma}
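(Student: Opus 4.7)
The plan is to show that for every graph $G=(V,E)$, the set of regular placements of $G$ in $Z:=\mathbb{R}^d\oplus_1\mathbb{R}$ is dense in $Z^V$. First I would identify the set of well-positioned placements. Combining Lemma \ref{lem:wp2} with the fact that every non-zero point of $\mathbb{R}^d$ is smooth, a placement $p\in Z^V$ is well-positioned in $Z$ if and only if $\pi_X(p_v)\neq\pi_X(p_w)$ and $\pi_{\mathbb{R}}(p_v)\neq\pi_{\mathbb{R}}(p_w)$ for every edge $vw\in E$. The set $U$ of such placements is open and dense in $Z^V$, being the complement of a finite union of proper affine subspaces. I would then partition $U$ by induced edge orientation: for each $\delta\in\{-1,+1\}^E$ set $U_\delta:=\{p\in U:\delta_p=\delta\}$, so that $U$ is the disjoint union of the open sets $U_\delta$.

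Next I would analyse the rank on each stratum $U_\delta$. By Lemma \ref{l:l1prod}, $\rank df_G(p)=\rank R(G,p)=\rank M(G,\pi_X(p),\delta)$ for every $p\in U_\delta$, and the entries of $M(G,\cdot,\delta)$ are real-analytic on the open subset of $(\mathbb{R}^d)^V$ where $\pi_X(p_v)\neq\pi_X(p_w)$ for every edge. Lower semi-continuity of rank then ensures that the maximum value $r(\delta):=\max_{p\in U_\delta}\rank df_G(p)$ is attained on an open dense subset of $U_\delta$, and in particular whenever the $\pi_X$-coordinates are algebraically independent over $\mathbb{Q}$. The crux is to show that $r(\delta)$ is independent of $\delta$, so that the global maximum rank $r^*:=\max_\delta r(\delta)$ is attained generically in every non-empty $U_\delta$. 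I would establish this by identifying $r(\delta)$ with the rank of the matroid union $R_{\mathbb{R}^d}(V)\vee M(G)$. The upper bound $\rank M(G,q,\delta)\leq \rank(R_{\mathbb{R}^d}(V)\vee M(G))$ follows because any row-independent $A\subseteq E$ in $M(G,q,\delta)$ satisfies $|B|\leq\rank R(G,q)|_B+\rank D(G,q,\delta)|_B\leq\rank_{R_{\mathbb{R}^d}(V)}(B)+\rank_{M(G)}(B)$ for every $B\subseteq A$, which by Edmonds' matroid union rank formula is equivalent to matroid-union independence of $A$. For the matching lower bound I would take a maximum matroid-union-independent set $F^*=I_1\sqcup I_2\subseteq E$ with $I_1$ independent in $R_{\mathbb{R}^d}(V)$ and $I_2$ a forest; standard matroid-union theory lets one extend $(I_1,I_2)$ inside $K_V$ to a disjoint pair $(H,T)$ with $H$ a spanning subgraph rigid in $\mathbb{R}^d$ and $T$ a spanning tree. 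Choosing any orientation $\delta'$ of $G':=(V,H\cup T)$ extending $\delta|_{F^*}$, Theorem \ref{t:capr} tells us that all rows of $M(G',q,\delta')$ are linearly independent for algebraically independent $q$, whence the $F^*$-indexed rows of $M(G,q,\delta)$ are also linearly independent.

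Having established $r(\delta)=\rank(R_{\mathbb{R}^d}(V)\vee M(G))$ for every $\delta$, density of regular placements follows routinely: any $p\in Z^V$ admits a small perturbation into some $U_\delta$, which can be further perturbed within $U_\delta$ so that its $\pi_X$-coordinates are algebraically independent over $\mathbb{Q}$, producing a placement of rank $r(\delta)=r^*$ that is regular. The main obstacle is the orientation-independence of the generic rank: Theorem \ref{t:capr} gives this immediately when $G$ decomposes into a spanning rigid subgraph plus a spanning tree (the full-rank case), and the matroid-union extension above is what handles arbitrary $G$.
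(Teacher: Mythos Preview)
Your overall strategy---show that the generic rank on each orientation stratum $U_\delta$ equals the rank of the matroid union $R_{\mathbb{R}^d}(V)\vee M(G)$, hence is independent of $\delta$---is sound and conceptually nice, and it is close in spirit to the paper's proof. Both arguments rest on Theorem~\ref{t:capr} and the observation that row-independence in $M(G,q,\delta)$ implies independence in the matroid union. The paper is more direct: it reduces to independent graphs, picks a placement with algebraically independent coordinates, and (when $G$ is not already rigid) embeds $G$ into a minimally rigid supergraph $G'=(V',E')$ on a possibly larger vertex set $V'\supseteq V$, invoking Theorem~\ref{t:l1prod} to guarantee such $G'$ exists. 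Your route instead identifies the generic rank explicitly, which is an attractive by-product.

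There is, however, a genuine gap in your lower-bound step. You claim that ``standard matroid-union theory lets one extend $(I_1,I_2)$ inside $K_V$'' to a disjoint pair $(H,T)$ with $H$ rigid in $\mathbb{R}^d$ and $T$ a spanning tree. Two problems: first, for Theorem~\ref{t:capr} to yield \emph{row-independence} of $M(G',q,\delta')$ you need $|E(G')|=(d+1)|V|-\binom{d+1}{2}-1$, i.e.\ $H$ must be \emph{minimally} rigid, not merely rigid. Second, and more seriously, such an extension inside $K_V$ need not exist when $|V|$ is small. For instance, $K_{d+1}$ has exactly $\binom{d+1}{2}$ edges and is itself minimally rigid in $\mathbb{R}^d$, leaving no edges for a spanning tree; more generally, for $|V|$ only slightly larger than $d$ the complete graph simply does not have enough edges to host disjoint $(H,T)$ of the required sizes. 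Standard matroid-union theory guarantees you can extend $I_1\sqcup I_2$ to a basis of the union on $K(V)$, but that basis need not split as a full basis of each summand. The fix is exactly what the paper does: allow the extension to take place on a larger vertex set $V'\supseteq V$. The $F^*$-indexed rows of $M(G',q',\delta')$ then have zero entries in all columns indexed by $V'\setminus V$, so their linear independence transfers back to $M(G,q,\delta)$. You should also treat the case $|V|\le d$ separately (Theorem~\ref{t:capr} requires $|V|\ge d+1$), and restrict attention to acyclic orientations $\delta$, since only those arise as $\delta_p$.
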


\proof
    It suffices for us to prove that for any graph $G=(V,E)$ that is independent in $\mathbb{R}^d \oplus_1 \mathbb{R}$,
    the set of regular placements of $G$ in $\mathbb{R}^d \oplus_1 \mathbb{R}$ is a dense subset of $(\mathbb{R}^d \oplus_1 \mathbb{R})^V$.
    Fix a placement $p$ of $G$ in $\mathbb{R}^d \oplus_1 \mathbb{R}$ such that the coordinates of $p$ are algebraically independent over $\mathbb{Q}$.
    Note that $(G,p)$ is well-positioned, and by Lemma \ref{l:l1prod}, $\rank R(G,p) = \rank M(G,p_{\mathbb{R}^d},\delta_p)$.
    
    We now prove that $(G,p)$ is independent and hence regular.
    First suppose that $|V| \leq d$.
    As the graph $G$ is independent in $\mathbb{R}^d$,
    our choice of $p$ implies that $\rank R(G,p_{\mathbb{R}^d}) = |E|$.
    From this we see that
    \begin{align*}
        |E| \geq \rank R(G,p) = \rank M(G,p_{\mathbb{R}^d},\delta_p) \geq \rank R(G,p_{\mathbb{R}^d})  = |E|,
    \end{align*}
    and so $(G,p)$ is independent.    
    Now suppose that $|V| \geq d+1$.
    If $G$ is rigid (and hence minimally rigid) in $\mathbb{R}^d \oplus_1 \mathbb{R}$, then it follows from Theorem \ref{graphthm}, Theorem \ref{t:capr} and Lemma \ref{l:l1prod} that 
    $$\rank R(G,p) = \rank M(G,p_{\mathbb{R}^d},\delta_p)=(d+1)|V| - \binom{d+1}{2} - 1=|E|.$$
    If $G$ is not rigid in $\mathbb{R}^d \oplus_1 \mathbb{R}$ then choose a graph $G' = (V',E')$ that is minimally rigid in $\mathbb{R}^d \oplus_1 \mathbb{R}$ and contains $G$ (this being possible due to Theorem \ref{t:l1prod}).
    Extend the placement $p$ to a placement $p'$ of $G'$ by setting $p'_v = p_v$ for all $v \in V$ and choosing the positions of the remaining vertices such that the coordinates of $p'$ are also algebraically independent over $\mathbb{Q}$.
    As $G'$ is rigid in $\mathbb{R}^d \oplus_1 \mathbb{R}$,
    our previous argument applied to the framework $(G',p')$ implies that $\rank R(G',p') = |E'|$,
    i.e., the rows of $R(G',p')$ are linearly independent.
    Hence the rows of the submatrix $R(G,p)$ of $R(G',p')$ are linearly independent and $\rank R(G,p) = |E|$ as required.
    
    The result now follows from the observation that the set of placements with coordinates that are algebraically independent over $\mathbb{Q}$ forms a dense subset of $(\mathbb{R}^d \oplus_1 \mathbb{R})^V$.
\endproof

\begin{theorem}\label{t:fullydefined}
    Let $Z = (\mathbb{R}^2 \oplus_1 \mathbb{R}) \oplus_\infty \mathbb{R}$
    be the cylindrical normed space with norm
    \begin{align*}
        \|(x_1,x_2,x_3,x_4)\|_Z := \max \left\{ \sqrt{x_1^2 + x_2^2} + |x_3| , ~ |x_4| \right\}.
    \end{align*}
    Then the following are equivalent for any graph $G=(V,E)$:
    \begin{enumerate}[(i)]
        \item $G$ is minimally rigid in $Z$.
        \item $G$ is an edge disjoint union of spanning subgraphs $H$, $T_1$ and $T_2$ where $H$ is $(2,3)$-tight and $T_1,T_2$ are trees.
    \end{enumerate}
\end{theorem}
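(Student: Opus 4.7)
The plan is to compose three results already established in the paper. Since $Z$ is a cylindrical space of the form $X \oplus_\infty \mathbb{R}$ with $X = \mathbb{R}^2 \oplus_1 \mathbb{R}$, the natural strategy is to first apply Theorem \ref{graphthm} with this $X$, then apply Theorem \ref{t:l1prod} (with $d=2$) to break down the minimally rigid subgraph in $X$, and finally use the Pollaczek-Geiringer/Laman characterisation (Theorem \ref{t:2d}(a)) to identify minimally rigid subgraphs of $\mathbb{R}^2$ with $(2,3)$-tight graphs.

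More concretely: first I would verify that Theorem \ref{graphthm} applies. The space $X = \mathbb{R}^2 \oplus_1 \mathbb{R}$ has dimension $3 \geq 2$ and is generic by Lemma \ref{l:generic}, so Theorem \ref{graphthm} gives that $G$ is minimally rigid in $Z$ if and only if $G$ is an edge-disjoint union of spanning subgraphs $H'$ and $T_2$, where $H'$ is minimally rigid in $\mathbb{R}^2 \oplus_1 \mathbb{R}$ and $T_2$ is a tree. Next, I would invoke Theorem \ref{t:l1prod} with $d=2$, which characterises minimally rigid graphs in $\mathbb{R}^2 \oplus_1 \mathbb{R}$ as edge-disjoint unions of a spanning subgraph $H$ minimally rigid in $\mathbb{R}^2$ and a spanning tree $T_1$. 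Finally, Theorem \ref{t:2d}(a) identifies minimally rigid graphs in $\mathbb{R}^2$ with $(2,3)$-tight graphs. Concatenating these equivalences yields the decomposition in (ii). Both directions follow simultaneously since each step is an ``if and only if''.

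The main issue to watch is ensuring the hypotheses of Theorem \ref{t:l1prod} are satisfied; in particular, that theorem is phrased for $|V| \geq d+1 = 3$. For $|V| \leq 2$ the equivalence in the target theorem is easy to verify directly: when $|V|=1$ the graph is edgeless and both conditions hold vacuously, and when $|V|=2$ no graph has an edge-disjoint union of a $(2,3)$-tight subgraph and two spanning trees (the edge count required is $4|V|-5 = 3$ but the complete graph has only one edge), and correspondingly no graph on two vertices is minimally rigid in $Z$ (since the projected monochrome subframework in $\mathbb{R}^2 \oplus_1 \mathbb{R}$ would have to be minimally rigid in a 3-dimensional space, requiring at least $3 \cdot 2 - \binom{3}{2} - 1 = 2$ edges by Theorem \ref{t:l1prod}, more than the one available). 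Thus no genuine obstacle arises and the argument is a clean three-line composition.
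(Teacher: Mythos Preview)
Your proposal is correct and follows essentially the same route as the paper: invoke Lemma~\ref{l:generic} to see that $X=\mathbb{R}^2\oplus_1\mathbb{R}$ is generic, apply Theorem~\ref{graphthm} to reduce minimal rigidity in $Z$ to a decomposition into a spanning tree $T_2$ and a graph minimally rigid in $X$, then apply Theorem~\ref{t:l1prod} and Theorem~\ref{t:2d}(a) to unpack the latter. The paper's proof is precisely this three-line composition.

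Your attention to the hypothesis $|V|\geq 3$ in Theorem~\ref{t:l1prod} is more careful than the paper, which glosses over small cases. Two minor corrections to your edge-case analysis: for $|V|=1$, condition~(ii) does \emph{not} hold vacuously, since a single vertex is not $(2,3)$-tight ($0\neq 2\cdot 1-3$), while $K_1$ \emph{is} minimally rigid in $Z$; so the theorem as literally stated fails for $|V|=1$ (a harmless boundary artefact the paper tacitly ignores). For $|V|=2$, your conclusion is right but your justification invokes Theorem~\ref{t:l1prod} for $|V|=2$, which is outside its stated range; a direct dimension count ($\dim\ker df_G(p)\geq 7>5\geq\dim\mathcal{T}(G,p)$) suffices instead.
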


\proof
Since, by Lemma \ref{l:generic}, the conical space $\mathbb{R}^2 \oplus_1 \mathbb{R}$ is generic we can apply Theorem \ref{graphthm} with $X=\mathbb{R}^2 \oplus_1 \mathbb{R}$.
The result now follows from Theorem \ref{t:l1prod} and Theorem \ref{t:2d}(a).
\endproof

Importantly,
Theorem \ref{t:fullydefined} is the first complete combinatorial characterisation of rigidity in a 4-dimensional normed space.
We conclude this section with the following sufficient connectivity criteria.

\begin{corollary}\label{c:10econn}
    Let $G=(V,E)$ be a graph with $|V| \geq 6$ such that:
    \begin{enumerate}[(i)]
        \item $G$ is 10-edge-connected,
        \item $G-\{u\}$ is 8-edge-connected for all $u \in V$,
        \item $G-\{u,v\}$ is 6-edge-connected for all $u,v \in V$,
        \item $G-\{u,v,w\}$ is 4-edge-connected for all $u,v,w \in V$, and
        \item $G-\{u,v,w,x\}$ is 2-edge-connected for all $u,v,w,x\in V$.
    \end{enumerate}
    Then $G$ is rigid in the normed space $(\mathbb{R}^2 \oplus_1 \mathbb{R}) \oplus_\infty \mathbb{R}$.
    In particular, every 10-connected graph is rigid in $(\mathbb{R}^2 \oplus_1 \mathbb{R}) \oplus_\infty \mathbb{R}$.
\end{corollary}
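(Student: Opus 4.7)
The plan is to reduce the problem to Theorem \ref{t:fullydefined} by exhibiting a suitable edge-disjoint decomposition of~$G$. Since rigidity in a normed space is preserved under adding edges, $G$ is rigid in $(\mathbb{R}^2 \oplus_1 \mathbb{R}) \oplus_\infty \mathbb{R}$ if and only if $G$ contains a spanning subgraph which is minimally rigid in that space. By Theorem \ref{t:fullydefined}, it therefore suffices to show that $G$ contains edge-disjoint spanning subgraphs $H, T_1, T_2$ where $H$ is $(2,3)$-tight and $T_1, T_2$ are spanning trees. Once this decomposition is produced, the ``in particular'' clause about 10-connected graphs follows at once, since vertex-connectivity~$\geq k$ forces edge-connectivity~$\geq k$ and also keeps the graph well-connected after small vertex deletions.

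The heart of the proof is therefore to invoke a packing theorem that upgrades the single-tree packing used for Corollary~\ref{c:8econn} to a two-tree packing. The sequence $(10,8,6,4,2)$ in hypotheses (i)--(v) is obtained from the sequence $(8,6,4,2)$ of Corollary~\ref{c:8econn} by adding $2$ to each term (intuitively, the ``cost'' of an additional spanning tree in the Nash-Williams sense) together with one further deletion condition. This is precisely the form predicted by results in the sparse-packing literature (extensions of \cite[Theorem 4]{packing} to two added spanning trees, or a direct application of the general matroid-union/connectivity criteria of Gu et al.). Assuming such a theorem, the desired decomposition of $G$ into a spanning $(2,3)$-tight subgraph plus two spanning trees is immediate, and the corollary then follows from Theorem~\ref{t:fullydefined}.

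If the appropriate packing theorem is not available off-the-shelf, a two-stage derivation is possible: first use Nash-Williams to extract two edge-disjoint spanning trees~$T_1, T_2$ (which only requires $4$-edge-connectivity); then verify that the residual graph $G'=G-T_1-T_2$ still satisfies the four connectivity conditions of \cite[Theorem 4]{packing}, namely that $G'$ is $8$-edge-connected and $G'-S$ is $(8-2|S|)$-edge-connected for $|S|\le 3$. The second step is the main obstacle: the trees $T_1,T_2$ can in principle concentrate their edges on small cuts, so a direct subtraction need not preserve the connectivity bounds. Resolving this either requires choosing $T_1, T_2$ carefully (via a simultaneous matroid-union argument that packs the $(2,3)$-tight subgraph and the two trees together) or an appeal to a more refined packing theorem that handles all three subgraphs at once. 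In either case the final step, deducing rigidity from the decomposition, is immediate via Theorem~\ref{t:fullydefined}.
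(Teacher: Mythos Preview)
Your overall strategy is correct and matches the paper's: reduce to Theorem~\ref{t:fullydefined} by producing edge-disjoint spanning subgraphs $H,T_1,T_2$ with $H$ $(2,3)$-tight and $T_1,T_2$ trees. The only difference is that you hedge on whether the required packing theorem exists, and offer a two-stage fallback; neither is needed. The result you want is exactly \cite[Theorem~4]{packing} (Cheriyan, Durand~de~Gevigney and Szigeti), the \emph{same} theorem already cited in Corollary~\ref{c:8econn}: it is stated in full generality and guarantees a packing of a spanning rigid subgraph of $\mathbb{R}^2$ (hence containing a $(2,3)$-tight spanning subgraph) together with $k$ edge-disjoint spanning trees whenever $G$ is $(2k+6)$-edge-connected and $G-S$ is $(2k+6-2|S|)$-edge-connected for all $|S|\le k+2$. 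Taking $k=2$ gives precisely hypotheses (i)--(v). So the paper's proof is a single line: apply \cite[Theorem~4]{packing}, then Theorem~\ref{t:fullydefined}. Your two-stage alternative (extract two trees first, then check the residual) is, as you yourself note, genuinely problematic and should be dropped.
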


\begin{proof}
By \cite[Theorem 4]{packing}, $G$ contains an edge disjoint union of spanning subgraphs $H$, $T_1$ and $T_2$ where $H$ is $(2,3)$-tight and $T_1,T_2$ are trees. Thus the result follows from Theorem \ref{t:fullydefined}.
\end{proof}

\subsection*{Acknowledgement}
SD was supported by the Heilbronn Institute for Mathematical Research and the Austrian Science Fund (FWF): P31888.

\newpage

\begin{appendices}

\section{Irreducible triangulations of a torus}
\label{sec:torus}

We recall that a triangulation of a surface is said to be \emph{irreducible} if every edge lies in three or more cycles of length 3.
In Figure \ref{fig:torus} we picture all 20 irreducible triangulations of the torus with at least 8 vertices and a decomposition of each into a tree and a graph that is rigid in $\mathbb{R}^2$.

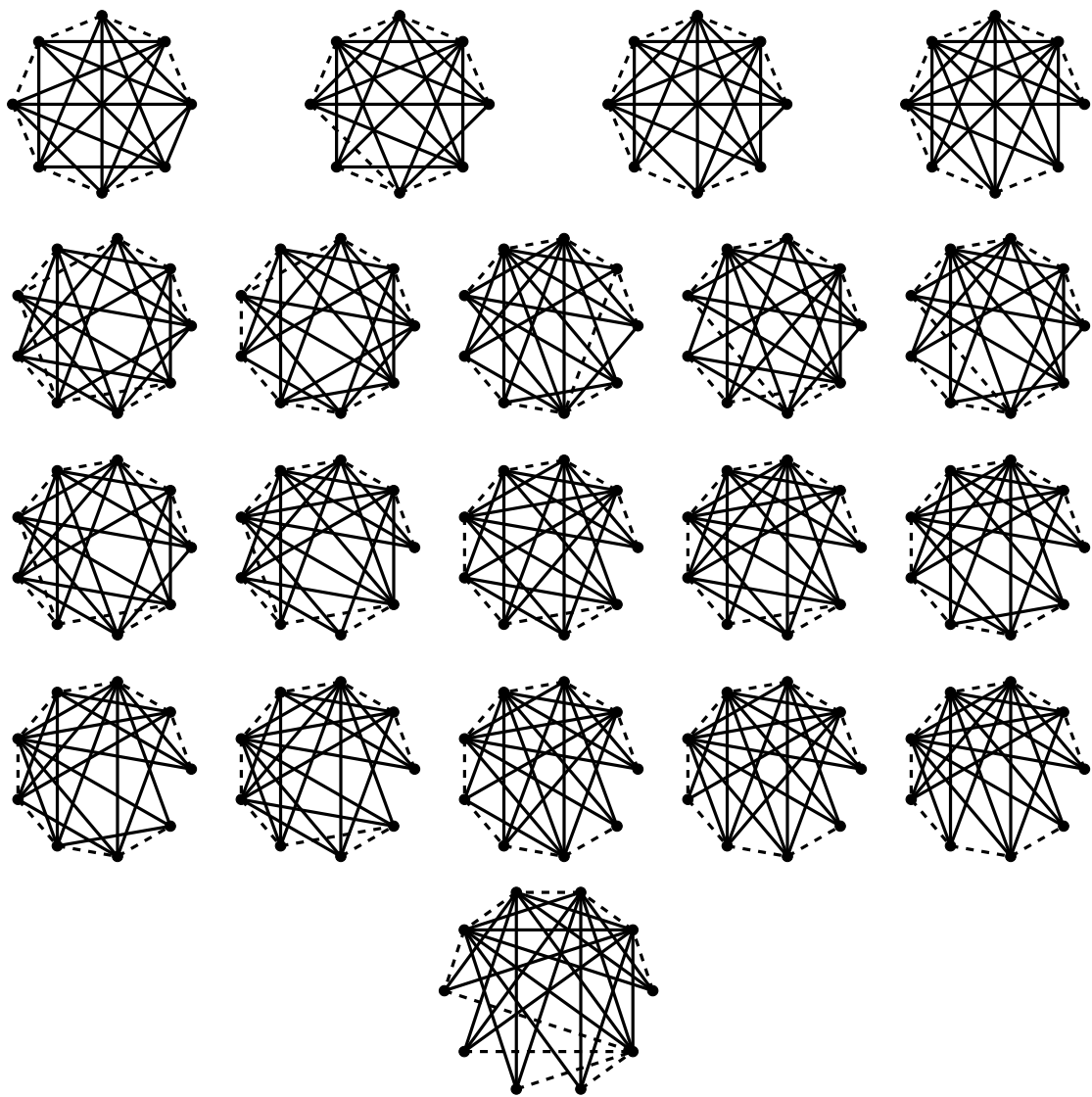
\begin{figure}[ht]
 \centering
\begin{tikzpicture}
\begin{scope}[shift={(0,3)},scale=1.2]
  \coordinate (A1) at (1,0);
  \coordinate (A2) at (0.707,0.707);
  \coordinate (A3) at (0,1);
  \coordinate (A4) at (-0.707,0.707);
  \coordinate (A5) at (-1,0);
  \coordinate (A6) at (-0.707,-0.707);
  \coordinate (A7) at (0,-1);
  \coordinate (A8) at (0.707,-0.707);

  \draw[very thick,dashed] (A1) -- (A2);
  \draw[very thick] (A1) -- (A3);
  \draw[very thick] (A1) -- (A4);
  \draw[very thick] (A1) -- (A5);
  \draw[very thick] (A1) -- (A7);
  \draw[very thick] (A1) -- (A8);
  
  \draw[very thick,dashed] (A2) -- (A3);
  \draw[very thick] (A2) -- (A4);
  \draw[very thick] (A2) -- (A5);
  \draw[very thick] (A2) -- (A6);
  \draw[very thick] (A2) -- (A7);
  
  \draw[very thick,dashed] (A3) -- (A4);
  \draw[very thick] (A3) -- (A6);
  \draw[very thick] (A3) -- (A7);
  \draw[very thick] (A3) -- (A8);
  
  \draw[very thick,dashed] (A4) -- (A5);
  \draw[very thick] (A4) -- (A6);
  \draw[very thick] (A4) -- (A8);
  
  \draw[very thick,dashed] (A5) -- (A6);
  \draw[very thick] (A5) -- (A7);
  \draw[very thick] (A5) -- (A8);
  
  \draw[very thick,dashed] (A6) -- (A7);
  \draw[very thick] (A6) -- (A8);
  
  \draw[very thick,dashed] (A7) -- (A8);
  
  \node[draw,circle,inner sep=1.4pt,fill] at (A1) {};
  \node[draw,circle,inner sep=1.4pt,fill] at (A2) {};
  \node[draw,circle,inner sep=1.4pt,fill] at (A3) {};
  \node[draw,circle,inner sep=1.4pt,fill] at (A4) {};
  \node[draw,circle,inner sep=1.4pt,fill] at (A5) {};
  \node[draw,circle,inner sep=1.4pt,fill] at (A6) {};
  \node[draw,circle,inner sep=1.4pt,fill] at (A7) {};
  \node[draw,circle,inner sep=1.4pt,fill] at (A8) {};
\end{scope}

\begin{scope}[shift={(4,3)},scale=1.2]
  \coordinate (A1) at (1,0);
  \coordinate (A2) at (0.707,0.707);
  \coordinate (A3) at (0,1);
  \coordinate (A4) at (-0.707,0.707);
  \coordinate (A5) at (-1,0);
  \coordinate (A6) at (-0.707,-0.707);
  \coordinate (A7) at (0,-1);
  \coordinate (A8) at (0.707,-0.707);

  \draw[very thick,dashed] (A1) -- (A2);
  \draw[very thick] (A1) -- (A3);
  \draw[very thick] (A1) -- (A4);
  \draw[very thick] (A1) -- (A5);
  \draw[very thick] (A1) -- (A7);
  
  \draw[very thick,dashed] (A2) -- (A3);
  \draw[very thick] (A2) -- (A4);
  \draw[very thick] (A2) -- (A5);
  \draw[very thick] (A2) -- (A6);
  \draw[very thick] (A2) -- (A7);
  \draw[very thick] (A2) -- (A8);
  
  \draw[very thick,dashed] (A3) -- (A4);
  \draw[very thick] (A3) -- (A5);
  \draw[very thick] (A3) -- (A6);
  \draw[very thick] (A3) -- (A8);
  
  \draw[very thick,dashed] (A4) -- (A5);
  \draw[very thick] (A4) -- (A6);
  \draw[very thick] (A4) -- (A7);
  \draw[very thick] (A4) -- (A8);
  
  \draw[very thick,dashed] (A5) -- (A7);
  \draw[very thick] (A5) -- (A8);
  
  \draw[very thick,dashed] (A6) -- (A7);
  \draw[very thick] (A6) -- (A8);
  
  \draw[very thick,dashed] (A7) -- (A8);
  
  \node[draw,circle,inner sep=1.4pt,fill] at (A1) {};
  \node[draw,circle,inner sep=1.4pt,fill] at (A2) {};
  \node[draw,circle,inner sep=1.4pt,fill] at (A3) {};
  \node[draw,circle,inner sep=1.4pt,fill] at (A4) {};
  \node[draw,circle,inner sep=1.4pt,fill] at (A5) {};
  \node[draw,circle,inner sep=1.4pt,fill] at (A6) {};
  \node[draw,circle,inner sep=1.4pt,fill] at (A7) {};
  \node[draw,circle,inner sep=1.4pt,fill] at (A8) {};
\end{scope}

\begin{scope}[shift={(8,3)},scale=1.2]
  \coordinate (A1) at (1,0);
  \coordinate (A2) at (0.707,0.707);
  \coordinate (A3) at (0,1);
  \coordinate (A4) at (-0.707,0.707);
  \coordinate (A5) at (-1,0);
  \coordinate (A6) at (-0.707,-0.707);
  \coordinate (A7) at (0,-1);
  \coordinate (A8) at (0.707,-0.707);

  \draw[very thick,dashed] (A1) -- (A2);
  \draw[very thick] (A1) -- (A3);
  \draw[very thick] (A1) -- (A4);
  \draw[very thick] (A1) -- (A5);
  \draw[very thick] (A1) -- (A7);
  
  \draw[very thick,dashed] (A2) -- (A3);
  \draw[very thick] (A2) -- (A4);
  \draw[very thick] (A2) -- (A5);
  \draw[very thick] (A2) -- (A6);
  \draw[very thick] (A2) -- (A7);
  \draw[very thick] (A2) -- (A8);
  
  \draw[very thick,dashed] (A3) -- (A4);
  \draw[very thick] (A3) -- (A5);
  \draw[very thick] (A3) -- (A6);
  \draw[very thick] (A3) -- (A7);
  \draw[very thick] (A3) -- (A8);
  
  \draw[very thick,dashed] (A4) -- (A5);
  \draw[very thick] (A4) -- (A6);
  \draw[very thick] (A4) -- (A8);
  
  \draw[very thick,dashed] (A5) -- (A6);
  \draw[very thick] (A5) -- (A7);
  \draw[very thick] (A5) -- (A8);
  
  \draw[very thick,dashed] (A6) -- (A7);
  
  \draw[very thick,dashed] (A7) -- (A8);
  
  \node[draw,circle,inner sep=1.4pt,fill] at (A1) {};
  \node[draw,circle,inner sep=1.4pt,fill] at (A2) {};
  \node[draw,circle,inner sep=1.4pt,fill] at (A3) {};
  \node[draw,circle,inner sep=1.4pt,fill] at (A4) {};
  \node[draw,circle,inner sep=1.4pt,fill] at (A5) {};
  \node[draw,circle,inner sep=1.4pt,fill] at (A6) {};
  \node[draw,circle,inner sep=1.4pt,fill] at (A7) {};
  \node[draw,circle,inner sep=1.4pt,fill] at (A8) {};
\end{scope}

\begin{scope}[shift={(12,3)},scale=1.2]
  \coordinate (A1) at (1,0);
  \coordinate (A2) at (0.707,0.707);
  \coordinate (A3) at (0,1);
  \coordinate (A4) at (-0.707,0.707);
  \coordinate (A5) at (-1,0);
  \coordinate (A6) at (-0.707,-0.707);
  \coordinate (A7) at (0,-1);
  \coordinate (A8) at (0.707,-0.707);

  \draw[very thick,dashed] (A1) -- (A2);
  \draw[very thick] (A1) -- (A3);
  \draw[very thick] (A1) -- (A4);
  \draw[very thick] (A1) -- (A5);
  
  \draw[very thick,dashed] (A2) -- (A3);
  \draw[very thick] (A2) -- (A4);
  \draw[very thick] (A2) -- (A5);
  \draw[very thick] (A2) -- (A6);
  \draw[very thick] (A2) -- (A7);
  \draw[very thick] (A2) -- (A8);
  
  \draw[very thick,dashed] (A3) -- (A4);
  \draw[very thick] (A3) -- (A5);
  \draw[very thick] (A3) -- (A6);
  \draw[very thick] (A3) -- (A7);
  \draw[very thick] (A3) -- (A8);
  
  \draw[very thick,dashed] (A4) -- (A5);
  \draw[very thick] (A4) -- (A6);
  \draw[very thick] (A4) -- (A7);
  \draw[very thick] (A4) -- (A8);
  
  \draw[very thick,dashed] (A5) -- (A6);
  \draw[very thick] (A5) -- (A7);
  \draw[very thick] (A5) -- (A8);
  
  \draw[very thick,dashed] (A6) -- (A7);
  
  \draw[very thick,dashed] (A7) -- (A8);
  
  \node[draw,circle,inner sep=1.4pt,fill] at (A1) {};
  \node[draw,circle,inner sep=1.4pt,fill] at (A2) {};
  \node[draw,circle,inner sep=1.4pt,fill] at (A3) {};
  \node[draw,circle,inner sep=1.4pt,fill] at (A4) {};
  \node[draw,circle,inner sep=1.4pt,fill] at (A5) {};
  \node[draw,circle,inner sep=1.4pt,fill] at (A6) {};
  \node[draw,circle,inner sep=1.4pt,fill] at (A7) {};
  \node[draw,circle,inner sep=1.4pt,fill] at (A8) {};
\end{scope}

\begin{scope}[shift={(0,0)},scale=1.2]
  \coordinate (A1) at (1,0);
  \coordinate (A2) at (0.766,0.643);
  \coordinate (A3) at (0.174,0.985);
  \coordinate (A4) at (-0.5,0.866);
  \coordinate (A5) at (-0.940,0.342);
  \coordinate (A6) at (-0.940,-0.342);
  \coordinate (A7) at (-0.5,-0.866);
  \coordinate (A8) at (0.174,-0.985);
  \coordinate (A9) at (0.766,-0.643);

  \draw[very thick,dashed] (A1) -- (A2);
  \draw[very thick] (A1) -- (A3);
  \draw[very thick] (A1) -- (A4);
  \draw[very thick] (A1) -- (A5);
  \draw[very thick] (A1) -- (A7);
  \draw[very thick] (A1) -- (A8);
  
  \draw[very thick,dashed] (A2) -- (A3);
  \draw[very thick] (A2) -- (A4);
  \draw[very thick] (A2) -- (A6);
  \draw[very thick] (A2) -- (A8);
  \draw[very thick] (A2) -- (A9);
  
  \draw[very thick,dashed] (A3) -- (A5);
  \draw[very thick] (A3) -- (A6);
  \draw[very thick] (A3) -- (A7);
  \draw[very thick] (A3) -- (A9);
  
  \draw[very thick,dashed] (A4) -- (A5);
  \draw[very thick] (A4) -- (A6);
  \draw[very thick] (A4) -- (A7);
  \draw[very thick] (A4) -- (A8);
  
  \draw[very thick,dashed] (A5) -- (A7);
  \draw[very thick] (A5) -- (A8);
  \draw[very thick] (A5) -- (A9);
  
  \draw[very thick,dashed] (A6) -- (A7);
  \draw[very thick] (A6) -- (A8);
  \draw[very thick] (A6) -- (A9);
  
  \draw[very thick,dashed] (A7) -- (A9);

  \draw[very thick,dashed] (A8) -- (A9);
  
  \node[draw,circle,inner sep=1.4pt,fill] at (A1) {};
  \node[draw,circle,inner sep=1.4pt,fill] at (A2) {};
  \node[draw,circle,inner sep=1.4pt,fill] at (A3) {};
  \node[draw,circle,inner sep=1.4pt,fill] at (A4) {};
  \node[draw,circle,inner sep=1.4pt,fill] at (A5) {};
  \node[draw,circle,inner sep=1.4pt,fill] at (A6) {};
  \node[draw,circle,inner sep=1.4pt,fill] at (A7) {};
  \node[draw,circle,inner sep=1.4pt,fill] at (A8) {};
  \node[draw,circle,inner sep=1.4pt,fill] at (A9) {};
\end{scope}

\begin{scope}[shift={(3,0)},scale=1.2]
  \coordinate (A1) at (1,0);
  \coordinate (A2) at (0.766,0.643);
  \coordinate (A3) at (0.174,0.985);
  \coordinate (A4) at (-0.5,0.866);
  \coordinate (A5) at (-0.940,0.342);
  \coordinate (A6) at (-0.940,-0.342);
  \coordinate (A7) at (-0.5,-0.866);
  \coordinate (A8) at (0.174,-0.985);
  \coordinate (A9) at (0.766,-0.643);

  \draw[very thick,dashed] (A1) -- (A2);
  \draw[very thick] (A1) -- (A3);
  \draw[very thick] (A1) -- (A4);
  \draw[very thick] (A1) -- (A5);
  \draw[very thick] (A1) -- (A7);
  \draw[very thick] (A1) -- (A8);
  
  \draw[very thick,dashed] (A2) -- (A3);
  \draw[very thick] (A2) -- (A4);
  \draw[very thick] (A2) -- (A6);
  \draw[very thick] (A2) -- (A8);
  \draw[very thick] (A2) -- (A9);
  
  \draw[very thick,dashed] (A3) -- (A5);
  \draw[very thick] (A3) -- (A6);
  \draw[very thick] (A3) -- (A7);
  \draw[very thick] (A3) -- (A9);
  
  \draw[very thick,dashed] (A4) -- (A5);
  \draw[very thick] (A4) -- (A6);
  \draw[very thick] (A4) -- (A7);
  \draw[very thick] (A4) -- (A9);
  
  \draw[very thick,dashed] (A5) -- (A6);
  \draw[very thick] (A5) -- (A8);
  \draw[very thick] (A5) -- (A9);
  
  \draw[very thick,dashed] (A6) -- (A7);
  \draw[very thick] (A6) -- (A8);

  \draw[very thick,dashed] (A7) -- (A8);
  \draw[very thick] (A7) -- (A9);

  \draw[very thick,dashed] (A8) -- (A9);
  
  \node[draw,circle,inner sep=1.4pt,fill] at (A1) {};
  \node[draw,circle,inner sep=1.4pt,fill] at (A2) {};
  \node[draw,circle,inner sep=1.4pt,fill] at (A3) {};
  \node[draw,circle,inner sep=1.4pt,fill] at (A4) {};
  \node[draw,circle,inner sep=1.4pt,fill] at (A5) {};
  \node[draw,circle,inner sep=1.4pt,fill] at (A6) {};
  \node[draw,circle,inner sep=1.4pt,fill] at (A7) {};
  \node[draw,circle,inner sep=1.4pt,fill] at (A8) {};
  \node[draw,circle,inner sep=1.4pt,fill] at (A9) {};
\end{scope}

\begin{scope}[shift={(6,0)},scale=1.2]
  \coordinate (A1) at (1,0);
  \coordinate (A2) at (0.766,0.643);
  \coordinate (A3) at (0.174,0.985);
  \coordinate (A4) at (-0.5,0.866);
  \coordinate (A5) at (-0.940,0.342);
  \coordinate (A6) at (-0.940,-0.342);
  \coordinate (A7) at (-0.5,-0.866);
  \coordinate (A8) at (0.174,-0.985);
  \coordinate (A9) at (0.766,-0.643);

  \draw[very thick,dashed] (A1) -- (A2);
  \draw[very thick] (A1) -- (A3);
  \draw[very thick] (A1) -- (A4);
  \draw[very thick] (A1) -- (A5);
  \draw[very thick] (A1) -- (A8);
  
  \draw[very thick,dashed] (A2) -- (A3);
  \draw[very thick] (A2) -- (A4);
  \draw[very thick] (A2) -- (A6);
  \draw[very thick,dashed] (A2) -- (A8);
  
  \draw[very thick,dashed] (A3) -- (A4);
  \draw[very thick] (A3) -- (A5);
  \draw[very thick] (A3) -- (A6);
  \draw[very thick] (A3) -- (A7);
  \draw[very thick] (A3) -- (A8);
  \draw[very thick] (A3) -- (A9);
  
  \draw[very thick,dashed] (A4) -- (A5);
  \draw[very thick] (A4) -- (A6);
  \draw[very thick] (A4) -- (A7);
  \draw[very thick] (A4) -- (A8);
  \draw[very thick] (A4) -- (A9);

  \draw[very thick] (A5) -- (A8);
  \draw[very thick] (A5) -- (A9);
  
  \draw[very thick,dashed] (A6) -- (A7);
  \draw[very thick] (A6) -- (A8);

  \draw[very thick,dashed] (A7) -- (A8);
  \draw[very thick] (A7) -- (A9);

  \draw[very thick,dashed] (A8) -- (A9);
  
  \node[draw,circle,inner sep=1.4pt,fill] at (A1) {};
  \node[draw,circle,inner sep=1.4pt,fill] at (A2) {};
  \node[draw,circle,inner sep=1.4pt,fill] at (A3) {};
  \node[draw,circle,inner sep=1.4pt,fill] at (A4) {};
  \node[draw,circle,inner sep=1.4pt,fill] at (A5) {};
  \node[draw,circle,inner sep=1.4pt,fill] at (A6) {};
  \node[draw,circle,inner sep=1.4pt,fill] at (A7) {};
  \node[draw,circle,inner sep=1.4pt,fill] at (A8) {};
  \node[draw,circle,inner sep=1.4pt,fill] at (A9) {};
\end{scope}

\begin{scope}[shift={(9,0)},scale=1.2]
  \coordinate (A1) at (1,0);
  \coordinate (A2) at (0.766,0.643);
  \coordinate (A3) at (0.174,0.985);
  \coordinate (A4) at (-0.5,0.866);
  \coordinate (A5) at (-0.940,0.342);
  \coordinate (A6) at (-0.940,-0.342);
  \coordinate (A7) at (-0.5,-0.866);
  \coordinate (A8) at (0.174,-0.985);
  \coordinate (A9) at (0.766,-0.643);

  \draw[very thick,dashed] (A1) -- (A2);
  \draw[very thick] (A1) -- (A3);
  \draw[very thick] (A1) -- (A4);
  \draw[very thick] (A1) -- (A5);
  \draw[very thick] (A1) -- (A8);
  
  \draw[very thick,dashed] (A2) -- (A3);
  \draw[very thick] (A2) -- (A4);
  \draw[very thick] (A2) -- (A6);
  \draw[very thick] (A2) -- (A7);
  \draw[very thick] (A2) -- (A8);
  \draw[very thick] (A2) -- (A9);
  
  \draw[very thick,dashed] (A3) -- (A4);
  \draw[very thick] (A3) -- (A5);
  \draw[very thick] (A3) -- (A7);
  \draw[very thick] (A3) -- (A9);
  
  \draw[very thick,dashed] (A4) -- (A5);
  \draw[very thick] (A4) -- (A6);
  \draw[very thick] (A4) -- (A7);
  \draw[very thick] (A4) -- (A8);
  \draw[very thick] (A4) -- (A9);

  \draw[very thick,dashed] (A5) -- (A8);
  \draw[very thick] (A5) -- (A9);
  
  \draw[very thick,dashed] (A6) -- (A7);
  \draw[very thick] (A6) -- (A8);
  \draw[very thick] (A6) -- (A9);

  \draw[very thick,dashed] (A7) -- (A9);

  \draw[very thick,dashed] (A8) -- (A9);
  
  \node[draw,circle,inner sep=1.4pt,fill] at (A1) {};
  \node[draw,circle,inner sep=1.4pt,fill] at (A2) {};
  \node[draw,circle,inner sep=1.4pt,fill] at (A3) {};
  \node[draw,circle,inner sep=1.4pt,fill] at (A4) {};
  \node[draw,circle,inner sep=1.4pt,fill] at (A5) {};
  \node[draw,circle,inner sep=1.4pt,fill] at (A6) {};
  \node[draw,circle,inner sep=1.4pt,fill] at (A7) {};
  \node[draw,circle,inner sep=1.4pt,fill] at (A8) {};
  \node[draw,circle,inner sep=1.4pt,fill] at (A9) {};
\end{scope}

\begin{scope}[shift={(12,0)},scale=1.2]
  \coordinate (A1) at (1,0);
  \coordinate (A2) at (0.766,0.643);
  \coordinate (A3) at (0.174,0.985);
  \coordinate (A4) at (-0.5,0.866);
  \coordinate (A5) at (-0.940,0.342);
  \coordinate (A6) at (-0.940,-0.342);
  \coordinate (A7) at (-0.5,-0.866);
  \coordinate (A8) at (0.174,-0.985);
  \coordinate (A9) at (0.766,-0.643);

  \draw[very thick,dashed] (A1) -- (A2);
  \draw[very thick] (A1) -- (A3);
  \draw[very thick] (A1) -- (A4);
  \draw[very thick] (A1) -- (A5);
  \draw[very thick] (A1) -- (A8);
  
  \draw[very thick,dashed] (A2) -- (A3);
  \draw[very thick] (A2) -- (A4);
  \draw[very thick] (A2) -- (A6);
  \draw[very thick] (A2) -- (A8);
  \draw[very thick] (A2) -- (A9);
  
  \draw[very thick,dashed] (A3) -- (A4);
  \draw[very thick] (A3) -- (A5);
  \draw[very thick] (A3) -- (A6);
  \draw[very thick] (A3) -- (A7);
  \draw[very thick] (A3) -- (A9);
  
  \draw[very thick,dashed] (A4) -- (A5);
  \draw[very thick] (A4) -- (A6);
  \draw[very thick] (A4) -- (A7);
  \draw[very thick] (A4) -- (A8);
  \draw[very thick] (A4) -- (A9);

  \draw[very thick,dashed] (A5) -- (A8);
  \draw[very thick] (A5) -- (A9);
  
  \draw[very thick,dashed] (A6) -- (A7);
  \draw[very thick] (A6) -- (A8);

  \draw[very thick,dashed] (A7) -- (A8);
  \draw[very thick] (A7) -- (A9);

  \draw[very thick,dashed] (A8) -- (A9);
  
  \node[draw,circle,inner sep=1.4pt,fill] at (A1) {};
  \node[draw,circle,inner sep=1.4pt,fill] at (A2) {};
  \node[draw,circle,inner sep=1.4pt,fill] at (A3) {};
  \node[draw,circle,inner sep=1.4pt,fill] at (A4) {};
  \node[draw,circle,inner sep=1.4pt,fill] at (A5) {};
  \node[draw,circle,inner sep=1.4pt,fill] at (A6) {};
  \node[draw,circle,inner sep=1.4pt,fill] at (A7) {};
  \node[draw,circle,inner sep=1.4pt,fill] at (A8) {};
  \node[draw,circle,inner sep=1.4pt,fill] at (A9) {};
\end{scope}

\begin{scope}[shift={(0,-3)},scale=1.2]
  \coordinate (A1) at (1,0);
  \coordinate (A2) at (0.766,0.643);
  \coordinate (A3) at (0.174,0.985);
  \coordinate (A4) at (-0.5,0.866);
  \coordinate (A5) at (-0.940,0.342);
  \coordinate (A6) at (-0.940,-0.342);
  \coordinate (A7) at (-0.5,-0.866);
  \coordinate (A8) at (0.174,-0.985);
  \coordinate (A9) at (0.766,-0.643);

  \draw[very thick,dashed] (A1) -- (A2);
  \draw[very thick] (A1) -- (A3);
  \draw[very thick] (A1) -- (A4);
  \draw[very thick] (A1) -- (A5);
  \draw[very thick] (A1) -- (A8);
  
  \draw[very thick,dashed] (A2) -- (A3);
  \draw[very thick] (A2) -- (A4);
  \draw[very thick] (A2) -- (A6);
  \draw[very thick] (A2) -- (A8);
  \draw[very thick] (A2) -- (A9);
  
  \draw[very thick,dashed] (A3) -- (A4);
  \draw[very thick] (A3) -- (A5);
  \draw[very thick] (A3) -- (A6);
  \draw[very thick] (A3) -- (A7);
  \draw[very thick] (A3) -- (A9);
  
  \draw[very thick,dashed] (A4) -- (A5);
  \draw[very thick] (A4) -- (A6);
  \draw[very thick] (A4) -- (A7);
  \draw[very thick] (A4) -- (A8);
  
  \draw[very thick,dashed] (A5) -- (A7);
  \draw[very thick] (A5) -- (A8);
  \draw[very thick] (A5) -- (A9);
  
  \draw[very thick,dashed] (A6) -- (A7);
  \draw[very thick] (A6) -- (A8);
  \draw[very thick] (A6) -- (A9);
  
  \draw[very thick,dashed] (A7) -- (A9);

  \draw[very thick,dashed] (A8) -- (A9);
  
  \node[draw,circle,inner sep=1.4pt,fill] at (A1) {};
  \node[draw,circle,inner sep=1.4pt,fill] at (A2) {};
  \node[draw,circle,inner sep=1.4pt,fill] at (A3) {};
  \node[draw,circle,inner sep=1.4pt,fill] at (A4) {};
  \node[draw,circle,inner sep=1.4pt,fill] at (A5) {};
  \node[draw,circle,inner sep=1.4pt,fill] at (A6) {};
  \node[draw,circle,inner sep=1.4pt,fill] at (A7) {};
  \node[draw,circle,inner sep=1.4pt,fill] at (A8) {};
  \node[draw,circle,inner sep=1.4pt,fill] at (A9) {};
\end{scope}

\begin{scope}[shift={(3,-3)},scale=1.2]
  \coordinate (A1) at (1,0);
  \coordinate (A2) at (0.766,0.643);
  \coordinate (A3) at (0.174,0.985);
  \coordinate (A4) at (-0.5,0.866);
  \coordinate (A5) at (-0.940,0.342);
  \coordinate (A6) at (-0.940,-0.342);
  \coordinate (A7) at (-0.5,-0.866);
  \coordinate (A8) at (0.174,-0.985);
  \coordinate (A9) at (0.766,-0.643);

  \draw[very thick,dashed] (A1) -- (A2);
  \draw[very thick] (A1) -- (A3);
  \draw[very thick] (A1) -- (A4);
  \draw[very thick] (A1) -- (A5);
  
  \draw[very thick,dashed] (A2) -- (A3);
  \draw[very thick] (A2) -- (A4);
  \draw[very thick] (A2) -- (A5);
  \draw[very thick] (A2) -- (A6);
  \draw[very thick] (A2) -- (A8);
  \draw[very thick] (A2) -- (A9);
  
  \draw[very thick,dashed] (A3) -- (A4);
  \draw[very thick] (A3) -- (A5);
  \draw[very thick] (A3) -- (A6);
  \draw[very thick] (A3) -- (A7);
  \draw[very thick] (A3) -- (A9);
  
  \draw[very thick,dashed] (A4) -- (A5);
  \draw[very thick] (A4) -- (A6);
  \draw[very thick] (A4) -- (A7);
  \draw[very thick] (A4) -- (A9);
  
  \draw[very thick,dashed] (A5) -- (A7);
  \draw[very thick] (A5) -- (A8);
  \draw[very thick] (A5) -- (A9);
  
  \draw[very thick,dashed] (A6) -- (A7);
  \draw[very thick] (A6) -- (A8);
  \draw[very thick] (A6) -- (A9);
  
  \draw[very thick,dashed] (A7) -- (A9);

  \draw[very thick,dashed] (A8) -- (A9);
  
  \node[draw,circle,inner sep=1.4pt,fill] at (A1) {};
  \node[draw,circle,inner sep=1.4pt,fill] at (A2) {};
  \node[draw,circle,inner sep=1.4pt,fill] at (A3) {};
  \node[draw,circle,inner sep=1.4pt,fill] at (A4) {};
  \node[draw,circle,inner sep=1.4pt,fill] at (A5) {};
  \node[draw,circle,inner sep=1.4pt,fill] at (A6) {};
  \node[draw,circle,inner sep=1.4pt,fill] at (A7) {};
  \node[draw,circle,inner sep=1.4pt,fill] at (A8) {};
  \node[draw,circle,inner sep=1.4pt,fill] at (A9) {};
\end{scope}

\begin{scope}[shift={(6,-3)},scale=1.2]
  \coordinate (A1) at (1,0);
  \coordinate (A2) at (0.766,0.643);
  \coordinate (A3) at (0.174,0.985);
  \coordinate (A4) at (-0.5,0.866);
  \coordinate (A5) at (-0.940,0.342);
  \coordinate (A6) at (-0.940,-0.342);
  \coordinate (A7) at (-0.5,-0.866);
  \coordinate (A8) at (0.174,-0.985);
  \coordinate (A9) at (0.766,-0.643);

  \draw[very thick,dashed] (A1) -- (A2);
  \draw[very thick] (A1) -- (A3);
  \draw[very thick] (A1) -- (A4);
  \draw[very thick] (A1) -- (A5);
  
  \draw[very thick,dashed] (A2) -- (A3);
  \draw[very thick] (A2) -- (A4);
  \draw[very thick] (A2) -- (A5);
  \draw[very thick] (A2) -- (A6);
  \draw[very thick] (A2) -- (A7);
  \draw[very thick] (A2) -- (A8);
  
  \draw[very thick,dashed] (A3) -- (A4);
  \draw[very thick] (A3) -- (A5);
  \draw[very thick] (A3) -- (A7);
  \draw[very thick] (A3) -- (A8);
  \draw[very thick] (A3) -- (A9);
  
  \draw[very thick,dashed] (A4) -- (A5);
  \draw[very thick] (A4) -- (A6);
  \draw[very thick] (A4) -- (A7);
  \draw[very thick] (A4) -- (A9);
  
  \draw[very thick,dashed] (A5) -- (A6);
  \draw[very thick] (A5) -- (A8);
  \draw[very thick] (A5) -- (A9);
  
  \draw[very thick,dashed] (A6) -- (A7);
  \draw[very thick] (A6) -- (A8);
  \draw[very thick] (A6) -- (A9);
  
  \draw[very thick,dashed] (A7) -- (A9);

  \draw[very thick,dashed] (A8) -- (A9);
  
  \node[draw,circle,inner sep=1.4pt,fill] at (A1) {};
  \node[draw,circle,inner sep=1.4pt,fill] at (A2) {};
  \node[draw,circle,inner sep=1.4pt,fill] at (A3) {};
  \node[draw,circle,inner sep=1.4pt,fill] at (A4) {};
  \node[draw,circle,inner sep=1.4pt,fill] at (A5) {};
  \node[draw,circle,inner sep=1.4pt,fill] at (A6) {};
  \node[draw,circle,inner sep=1.4pt,fill] at (A7) {};
  \node[draw,circle,inner sep=1.4pt,fill] at (A8) {};
  \node[draw,circle,inner sep=1.4pt,fill] at (A9) {};
\end{scope}

\begin{scope}[shift={(9,-3)},scale=1.2]
  \coordinate (A1) at (1,0);
  \coordinate (A2) at (0.766,0.643);
  \coordinate (A3) at (0.174,0.985);
  \coordinate (A4) at (-0.5,0.866);
  \coordinate (A5) at (-0.940,0.342);
  \coordinate (A6) at (-0.940,-0.342);
  \coordinate (A7) at (-0.5,-0.866);
  \coordinate (A8) at (0.174,-0.985);
  \coordinate (A9) at (0.766,-0.643);

  \draw[very thick,dashed] (A1) -- (A2);
  \draw[very thick] (A1) -- (A3);
  \draw[very thick] (A1) -- (A4);
  \draw[very thick] (A1) -- (A5);
  
  \draw[very thick,dashed] (A2) -- (A3);
  \draw[very thick] (A2) -- (A4);
  \draw[very thick] (A2) -- (A5);
  \draw[very thick] (A2) -- (A6);
  \draw[very thick] (A2) -- (A8);
  
  \draw[very thick,dashed] (A3) -- (A4);
  \draw[very thick] (A3) -- (A5);
  \draw[very thick] (A3) -- (A6);
  \draw[very thick] (A3) -- (A7);
  \draw[very thick] (A3) -- (A8);
  \draw[very thick] (A3) -- (A9);
  
  \draw[very thick,dashed] (A4) -- (A5);
  \draw[very thick] (A4) -- (A6);
  \draw[very thick] (A4) -- (A7);
  \draw[very thick] (A4) -- (A9);
  
  \draw[very thick,dashed] (A5) -- (A6);
  \draw[very thick] (A5) -- (A8);
  \draw[very thick] (A5) -- (A9);
  
  \draw[very thick,dashed] (A6) -- (A7);
  \draw[very thick] (A6) -- (A8);
  \draw[very thick] (A6) -- (A9);
  
  \draw[very thick,dashed] (A7) -- (A9);

  \draw[very thick,dashed] (A8) -- (A9);
  
  \node[draw,circle,inner sep=1.4pt,fill] at (A1) {};
  \node[draw,circle,inner sep=1.4pt,fill] at (A2) {};
  \node[draw,circle,inner sep=1.4pt,fill] at (A3) {};
  \node[draw,circle,inner sep=1.4pt,fill] at (A4) {};
  \node[draw,circle,inner sep=1.4pt,fill] at (A5) {};
  \node[draw,circle,inner sep=1.4pt,fill] at (A6) {};
  \node[draw,circle,inner sep=1.4pt,fill] at (A7) {};
  \node[draw,circle,inner sep=1.4pt,fill] at (A8) {};
  \node[draw,circle,inner sep=1.4pt,fill] at (A9) {};
\end{scope}

\begin{scope}[shift={(12,-3)},scale=1.2]
  \coordinate (A1) at (1,0);
  \coordinate (A2) at (0.766,0.643);
  \coordinate (A3) at (0.174,0.985);
  \coordinate (A4) at (-0.5,0.866);
  \coordinate (A5) at (-0.940,0.342);
  \coordinate (A6) at (-0.940,-0.342);
  \coordinate (A7) at (-0.5,-0.866);
  \coordinate (A8) at (0.174,-0.985);
  \coordinate (A9) at (0.766,-0.643);

  \draw[very thick,dashed] (A1) -- (A2);
  \draw[very thick] (A1) -- (A3);
  \draw[very thick] (A1) -- (A4);
  \draw[very thick] (A1) -- (A5);
  
  \draw[very thick,dashed] (A2) -- (A3);
  \draw[very thick] (A2) -- (A4);
  \draw[very thick] (A2) -- (A5);
  \draw[very thick] (A2) -- (A6);
  \draw[very thick] (A2) -- (A8);
  
  \draw[very thick,dashed] (A3) -- (A4);
  \draw[very thick] (A3) -- (A5);
  \draw[very thick] (A3) -- (A6);
  \draw[very thick] (A3) -- (A7);
  \draw[very thick] (A3) -- (A8);
  \draw[very thick] (A3) -- (A9);
  
  \draw[very thick,dashed] (A4) -- (A5);
  \draw[very thick] (A4) -- (A6);
  \draw[very thick] (A4) -- (A7);
  \draw[very thick] (A4) -- (A9);
  
  \draw[very thick,dashed] (A5) -- (A6);
  \draw[very thick] (A5) -- (A8);
  \draw[very thick] (A5) -- (A9);
  
  \draw[very thick,dashed] (A6) -- (A7);
  \draw[very thick] (A6) -- (A8);
  
  \draw[very thick,dashed] (A7) -- (A8);
  \draw[very thick] (A7) -- (A9);

  \draw[very thick,dashed] (A8) -- (A9);
  
  \node[draw,circle,inner sep=1.4pt,fill] at (A1) {};
  \node[draw,circle,inner sep=1.4pt,fill] at (A2) {};
  \node[draw,circle,inner sep=1.4pt,fill] at (A3) {};
  \node[draw,circle,inner sep=1.4pt,fill] at (A4) {};
  \node[draw,circle,inner sep=1.4pt,fill] at (A5) {};
  \node[draw,circle,inner sep=1.4pt,fill] at (A6) {};
  \node[draw,circle,inner sep=1.4pt,fill] at (A7) {};
  \node[draw,circle,inner sep=1.4pt,fill] at (A8) {};
  \node[draw,circle,inner sep=1.4pt,fill] at (A9) {};
\end{scope}

\begin{scope}[shift={(0,-6)},scale=1.2]
  \coordinate (A1) at (1,0);
  \coordinate (A2) at (0.766,0.643);
  \coordinate (A3) at (0.174,0.985);
  \coordinate (A4) at (-0.5,0.866);
  \coordinate (A5) at (-0.940,0.342);
  \coordinate (A6) at (-0.940,-0.342);
  \coordinate (A7) at (-0.5,-0.866);
  \coordinate (A8) at (0.174,-0.985);
  \coordinate (A9) at (0.766,-0.643);

  \draw[very thick,dashed] (A1) -- (A2);
  \draw[very thick] (A1) -- (A3);
  \draw[very thick] (A1) -- (A4);
  \draw[very thick] (A1) -- (A5);
  
  \draw[very thick,dashed] (A2) -- (A3);
  \draw[very thick] (A2) -- (A4);
  \draw[very thick] (A2) -- (A5);
  \draw[very thick] (A2) -- (A6);
  \draw[very thick] (A2) -- (A8);
  
  \draw[very thick,dashed] (A3) -- (A4);
  \draw[very thick] (A3) -- (A5);
  \draw[very thick] (A3) -- (A6);
  \draw[very thick] (A3) -- (A7);
  \draw[very thick] (A3) -- (A8);
  \draw[very thick] (A3) -- (A9);
  
  \draw[very thick,dashed] (A4) -- (A5);
  \draw[very thick] (A4) -- (A6);
  \draw[very thick] (A4) -- (A7);
  
  \draw[very thick,dashed] (A5) -- (A6);
  \draw[very thick] (A5) -- (A7);
  \draw[very thick] (A5) -- (A8);
  \draw[very thick] (A5) -- (A9);
  
  \draw[very thick,dashed] (A6) -- (A7);
  \draw[very thick] (A6) -- (A8);
  
  \draw[very thick,dashed] (A7) -- (A8);
  \draw[very thick] (A7) -- (A9);

  \draw[very thick,dashed] (A8) -- (A9);
  
  \node[draw,circle,inner sep=1.4pt,fill] at (A1) {};
  \node[draw,circle,inner sep=1.4pt,fill] at (A2) {};
  \node[draw,circle,inner sep=1.4pt,fill] at (A3) {};
  \node[draw,circle,inner sep=1.4pt,fill] at (A4) {};
  \node[draw,circle,inner sep=1.4pt,fill] at (A5) {};
  \node[draw,circle,inner sep=1.4pt,fill] at (A6) {};
  \node[draw,circle,inner sep=1.4pt,fill] at (A7) {};
  \node[draw,circle,inner sep=1.4pt,fill] at (A8) {};
  \node[draw,circle,inner sep=1.4pt,fill] at (A9) {};
\end{scope}

\begin{scope}[shift={(3,-6)},scale=1.2]
  \coordinate (A1) at (1,0);
  \coordinate (A2) at (0.766,0.643);
  \coordinate (A3) at (0.174,0.985);
  \coordinate (A4) at (-0.5,0.866);
  \coordinate (A5) at (-0.940,0.342);
  \coordinate (A6) at (-0.940,-0.342);
  \coordinate (A7) at (-0.5,-0.866);
  \coordinate (A8) at (0.174,-0.985);
  \coordinate (A9) at (0.766,-0.643);

  \draw[very thick,dashed] (A1) -- (A2);
  \draw[very thick] (A1) -- (A3);
  \draw[very thick] (A1) -- (A4);
  \draw[very thick] (A1) -- (A5);
  
  \draw[very thick,dashed] (A2) -- (A3);
  \draw[very thick] (A2) -- (A4);
  \draw[very thick] (A2) -- (A5);
  \draw[very thick] (A2) -- (A6);
  \draw[very thick] (A2) -- (A8);
  
  \draw[very thick,dashed] (A3) -- (A4);
  \draw[very thick] (A3) -- (A5);
  \draw[very thick] (A3) -- (A6);
  \draw[very thick] (A3) -- (A7);
  \draw[very thick] (A3) -- (A8);
  \draw[very thick] (A3) -- (A9);
  
  \draw[very thick,dashed] (A4) -- (A5);
  \draw[very thick] (A4) -- (A6);
  \draw[very thick] (A4) -- (A7);
  
  \draw[very thick,dashed] (A5) -- (A6);
  \draw[very thick] (A5) -- (A7);
  \draw[very thick] (A5) -- (A8);
  \draw[very thick] (A5) -- (A9);
  
  \draw[very thick,dashed] (A6) -- (A7);
  \draw[very thick] (A6) -- (A8);
  \draw[very thick] (A6) -- (A9);

  \draw[very thick,dashed] (A7) -- (A9);

  \draw[very thick,dashed] (A8) -- (A9);
  
  \node[draw,circle,inner sep=1.4pt,fill] at (A1) {};
  \node[draw,circle,inner sep=1.4pt,fill] at (A2) {};
  \node[draw,circle,inner sep=1.4pt,fill] at (A3) {};
  \node[draw,circle,inner sep=1.4pt,fill] at (A4) {};
  \node[draw,circle,inner sep=1.4pt,fill] at (A5) {};
  \node[draw,circle,inner sep=1.4pt,fill] at (A6) {};
  \node[draw,circle,inner sep=1.4pt,fill] at (A7) {};
  \node[draw,circle,inner sep=1.4pt,fill] at (A8) {};
  \node[draw,circle,inner sep=1.4pt,fill] at (A9) {};
\end{scope}

\begin{scope}[shift={(6,-6)},scale=1.2]
  \coordinate (A1) at (1,0);
  \coordinate (A2) at (0.766,0.643);
  \coordinate (A3) at (0.174,0.985);
  \coordinate (A4) at (-0.5,0.866);
  \coordinate (A5) at (-0.940,0.342);
  \coordinate (A6) at (-0.940,-0.342);
  \coordinate (A7) at (-0.5,-0.866);
  \coordinate (A8) at (0.174,-0.985);
  \coordinate (A9) at (0.766,-0.643);

  \draw[very thick,dashed] (A1) -- (A2);
  \draw[very thick] (A1) -- (A3);
  \draw[very thick] (A1) -- (A4);
  \draw[very thick] (A1) -- (A5);
  
  \draw[very thick,dashed] (A2) -- (A3);
  \draw[very thick] (A2) -- (A4);
  \draw[very thick] (A2) -- (A5);
  \draw[very thick] (A2) -- (A6);
  \draw[very thick] (A2) -- (A7);
  \draw[very thick] (A2) -- (A8);
  
  \draw[very thick,dashed] (A3) -- (A4);
  \draw[very thick] (A3) -- (A5);
  \draw[very thick] (A3) -- (A7);
  \draw[very thick] (A3) -- (A8);
  \draw[very thick] (A3) -- (A9);
  
  \draw[very thick,dashed] (A4) -- (A5);
  \draw[very thick] (A4) -- (A6);
  \draw[very thick] (A4) -- (A7);
  \draw[very thick] (A4) -- (A8);
  \draw[very thick] (A4) -- (A9);
  
  \draw[very thick,dashed] (A5) -- (A6);
  \draw[very thick] (A5) -- (A8);
  \draw[very thick] (A5) -- (A9);
  
  \draw[very thick,dashed] (A6) -- (A7);
  \draw[very thick] (A6) -- (A8);

  \draw[very thick,dashed] (A7) -- (A8);

  \draw[very thick,dashed] (A8) -- (A9);
  
  \node[draw,circle,inner sep=1.4pt,fill] at (A1) {};
  \node[draw,circle,inner sep=1.4pt,fill] at (A2) {};
  \node[draw,circle,inner sep=1.4pt,fill] at (A3) {};
  \node[draw,circle,inner sep=1.4pt,fill] at (A4) {};
  \node[draw,circle,inner sep=1.4pt,fill] at (A5) {};
  \node[draw,circle,inner sep=1.4pt,fill] at (A6) {};
  \node[draw,circle,inner sep=1.4pt,fill] at (A7) {};
  \node[draw,circle,inner sep=1.4pt,fill] at (A8) {};
  \node[draw,circle,inner sep=1.4pt,fill] at (A9) {};
\end{scope}

\begin{scope}[shift={(9,-6)},scale=1.2]
  \coordinate (A1) at (1,0);
  \coordinate (A2) at (0.766,0.643);
  \coordinate (A3) at (0.174,0.985);
  \coordinate (A4) at (-0.5,0.866);
  \coordinate (A5) at (-0.940,0.342);
  \coordinate (A6) at (-0.940,-0.342);
  \coordinate (A7) at (-0.5,-0.866);
  \coordinate (A8) at (0.174,-0.985);
  \coordinate (A9) at (0.766,-0.643);

  \draw[very thick,dashed] (A1) -- (A2);
  \draw[very thick] (A1) -- (A3);
  \draw[very thick] (A1) -- (A4);
  \draw[very thick] (A1) -- (A5);
  
  \draw[very thick,dashed] (A2) -- (A3);
  \draw[very thick] (A2) -- (A4);
  \draw[very thick] (A2) -- (A5);
  \draw[very thick] (A2) -- (A6);
  \draw[very thick] (A2) -- (A7);
  \draw[very thick] (A2) -- (A8);
  
  \draw[very thick,dashed] (A3) -- (A4);
  \draw[very thick] (A3) -- (A5);
  \draw[very thick] (A3) -- (A7);
  \draw[very thick] (A3) -- (A8);
  \draw[very thick] (A3) -- (A9);
  
  \draw[very thick,dashed] (A4) -- (A5);
  \draw[very thick] (A4) -- (A6);
  \draw[very thick] (A4) -- (A7);
  \draw[very thick] (A4) -- (A8);
  \draw[very thick] (A4) -- (A9);
  
  \draw[very thick,dashed] (A5) -- (A6);
  \draw[very thick] (A5) -- (A7);
  \draw[very thick] (A5) -- (A8);
  \draw[very thick] (A5) -- (A9);
  
  \draw[very thick,dashed] (A6) -- (A7);

  \draw[very thick,dashed] (A7) -- (A8);

  \draw[very thick,dashed] (A8) -- (A9);
  
  \node[draw,circle,inner sep=1.4pt,fill] at (A1) {};
  \node[draw,circle,inner sep=1.4pt,fill] at (A2) {};
  \node[draw,circle,inner sep=1.4pt,fill] at (A3) {};
  \node[draw,circle,inner sep=1.4pt,fill] at (A4) {};
  \node[draw,circle,inner sep=1.4pt,fill] at (A5) {};
  \node[draw,circle,inner sep=1.4pt,fill] at (A6) {};
  \node[draw,circle,inner sep=1.4pt,fill] at (A7) {};
  \node[draw,circle,inner sep=1.4pt,fill] at (A8) {};
  \node[draw,circle,inner sep=1.4pt,fill] at (A9) {};
\end{scope}

\begin{scope}[shift={(12,-6)},scale=1.2]
  \coordinate (A1) at (1,0);
  \coordinate (A2) at (0.766,0.643);
  \coordinate (A3) at (0.174,0.985);
  \coordinate (A4) at (-0.5,0.866);
  \coordinate (A5) at (-0.940,0.342);
  \coordinate (A6) at (-0.940,-0.342);
  \coordinate (A7) at (-0.5,-0.866);
  \coordinate (A8) at (0.174,-0.985);
  \coordinate (A9) at (0.766,-0.643);

  \draw[very thick,dashed] (A1) -- (A2);
  \draw[very thick] (A1) -- (A3);
  \draw[very thick] (A1) -- (A4);
  \draw[very thick] (A1) -- (A5);
  
  \draw[very thick,dashed] (A2) -- (A3);
  \draw[very thick] (A2) -- (A4);
  \draw[very thick] (A2) -- (A5);
  \draw[very thick] (A2) -- (A6);
  \draw[very thick] (A2) -- (A8);
  
  \draw[very thick,dashed] (A3) -- (A4);
  \draw[very thick] (A3) -- (A5);
  \draw[very thick] (A3) -- (A6);
  \draw[very thick] (A3) -- (A7);
  \draw[very thick] (A3) -- (A8);
  \draw[very thick] (A3) -- (A9);
  
  \draw[very thick,dashed] (A4) -- (A5);
  \draw[very thick] (A4) -- (A6);
  \draw[very thick] (A4) -- (A7);
  \draw[very thick] (A4) -- (A8);
  \draw[very thick] (A4) -- (A9);
  
  \draw[very thick,dashed] (A5) -- (A6);
  \draw[very thick] (A5) -- (A7);
  \draw[very thick] (A5) -- (A8);
  \draw[very thick] (A5) -- (A9);
  
  \draw[very thick,dashed] (A6) -- (A7);

  \draw[very thick,dashed] (A7) -- (A8);

  \draw[very thick,dashed] (A8) -- (A9);
  
  \node[draw,circle,inner sep=1.4pt,fill] at (A1) {};
  \node[draw,circle,inner sep=1.4pt,fill] at (A2) {};
  \node[draw,circle,inner sep=1.4pt,fill] at (A3) {};
  \node[draw,circle,inner sep=1.4pt,fill] at (A4) {};
  \node[draw,circle,inner sep=1.4pt,fill] at (A5) {};
  \node[draw,circle,inner sep=1.4pt,fill] at (A6) {};
  \node[draw,circle,inner sep=1.4pt,fill] at (A7) {};
  \node[draw,circle,inner sep=1.4pt,fill] at (A8) {};
  \node[draw,circle,inner sep=1.4pt,fill] at (A9) {};
\end{scope}

\begin{scope}[shift={(6,-9)},scale=1.4]
  \coordinate (A1) at (1,0);
  \coordinate (A2) at (0.809,0.588);
  \coordinate (A3) at (0.309,0.951);
  \coordinate (A4) at (-0.309,0.951);
  \coordinate (A5) at (-0.809,0.588);
  \coordinate (A6) at (-1,0);
  \coordinate (A7) at (-0.809,-0.588);
  \coordinate (A8) at (-0.309,-0.951);
  \coordinate (A9) at (0.309,-0.951);
  \coordinate (A10) at (0.809,-0.588);

  \draw[very thick,dashed] (A1) -- (A2);
  \draw[very thick] (A1) -- (A3);
  \draw[very thick] (A1) -- (A4);
  \draw[very thick] (A1) -- (A5);
  
  \draw[very thick,dashed] (A2) -- (A3);
  \draw[very thick] (A2) -- (A4);
  \draw[very thick] (A2) -- (A5);
  \draw[very thick] (A2) -- (A6);
  \draw[very thick] (A2) -- (A7);
  \draw[very thick] (A2) -- (A9);
  \draw[very thick] (A2) -- (A10);
  
  \draw[very thick,dashed] (A3) -- (A4);
  \draw[very thick] (A3) -- (A5);
  \draw[very thick] (A3) -- (A7);
  \draw[very thick] (A3) -- (A8);
  \draw[very thick] (A3) -- (A9);
  \draw[very thick] (A3) -- (A10);
  
  \draw[very thick,dashed] (A4) -- (A5);
  \draw[very thick] (A4) -- (A6);
  \draw[very thick] (A4) -- (A7);
  \draw[very thick] (A4) -- (A8);
  \draw[very thick] (A4) -- (A10);
  
  \draw[very thick,dashed] (A5) -- (A6);
  \draw[very thick] (A5) -- (A8);
  \draw[very thick] (A5) -- (A9);
  \draw[very thick] (A5) -- (A10);
  
  \draw[very thick,dashed] (A6) -- (A10);
  
  \draw[very thick,dashed] (A7) -- (A10);
  
  \draw[very thick,dashed] (A8) -- (A10);
  
  \draw[very thick,dashed] (A9) -- (A10);
  
  \node[draw,circle,inner sep=1.4pt,fill] at (A1) {};
  \node[draw,circle,inner sep=1.4pt,fill] at (A2) {};
  \node[draw,circle,inner sep=1.4pt,fill] at (A3) {};
  \node[draw,circle,inner sep=1.4pt,fill] at (A4) {};
  \node[draw,circle,inner sep=1.4pt,fill] at (A5) {};
  \node[draw,circle,inner sep=1.4pt,fill] at (A6) {};
  \node[draw,circle,inner sep=1.4pt,fill] at (A7) {};
  \node[draw,circle,inner sep=1.4pt,fill] at (A8) {};
  \node[draw,circle,inner sep=1.4pt,fill] at (A9) {};
  \node[draw,circle,inner sep=1.4pt,fill] at (A10) {};
\end{scope}
\end{tikzpicture}
\caption{All irreducible triangulations of a torus with 8 or more vertices decomposed into a spanning tree (dashed edges) and a spanning subgraph that is rigid in $\mathbb{R}^2$;
each of the latter subgraphs are rigid since they can be constructed from $K_3$ by a sequence of 2-dimensional 0-extensions and edge additions.}
\label{fig:torus}
\end{figure}

\newpage

\section{Application of Edmond's algorithm}
\label{sec:alg}

The following is an application of Edmond's algorithm \cite{edmond} that can be used to determine whether a graph is rigid in a cylindrical space $X \oplus_\infty \mathbb{R}$,
where $X$ is a generic space.
See \cite[Algorithm 11.1]{matroidunion} for a description of the general algorithm. 

\begin{algorithm}
\textbf{Input:} Graph $G=(V,E)$, edge set $A \subset E$ such that $(V,A)$ is a maximal independent subgraph of $G$ in $X$, edge set $B \subset E$ such that $(V,B)$ is a maximal cycle-free subgraph of $G$.\\
\textbf{Output:} Maximal independent subgraph $H$ of $G$ in $X \oplus_\infty \mathbb{R}$.
\caption{ Find a maximal subgraph that is independent in $X \oplus_\infty \mathbb{R}$.}
\label{alg1}
\begin{algorithmic}
\State $I \gets A$
\State $J \gets B$
\State Repeat $\gets$ True
\While{Repeat \textbf{is} True}
    \State Repeat $\gets$ False
    \State $D \gets $ directed graph with vertices $V(D) = E$ and directed edges $E(D) = \emptyset$
    \For{$e \in I$, $f \in E \setminus I$}
        \If{$(V, I -e +f)$ is independent in $X$}
            \State $E(D) \gets E(D) \cup \{ (e,f)\}$
        \EndIf
    \EndFor
    \For{$e \in J$, $f \in E \setminus J$ }
        \If{$(V, J -e +f)$ contains no cycles}
            \State $E(D) \gets E(D) \cup \{ (e,f)\}$
        \EndIf
    \EndFor
    \For{$e_0 \in I \cap J$}
        \If{search of $D$ from $e_0$ finds directed path $e_0, \ldots, e_n, e$ with $e \in E \setminus (I \cup J)$}
            \If{$e_n \in I$}
                \State $I \gets I \triangle  \{e_0,\ldots,e_n,e\}$
                \State $J \gets J \triangle   \{e_0,\ldots,e_n\}$
            \ElsIf{$e_n \in J$}
                \State $I \gets I \triangle   \{e_0,\ldots,e_n\}$
                \State $J \gets J \triangle    \{e_0,\ldots,e_n,e\}$
            \EndIf
            \State Repeat $\gets$ True
            \State \textbf{break for loop}
        \EndIf
    \EndFor
\EndWhile
\State \Return $H = (V, I \cup J)$
\end{algorithmic}
\end{algorithm}

\end{appendices}

\end{document}